\newtheorem{theorem}{Theorem}[section]
\newtheorem{thm}[theorem]{Theorem}
\newtheorem{prop}[theorem]{Proposition}
\newtheorem{lem}[theorem]{Lemma}
\newtheorem{fact}[theorem]{Fact}
\newtheorem{cor}[theorem]{Corollary}
\newtheorem{corollary}[theorem]{Corollary}
\newtheorem{ques}[theorem]{Open Question}
\newtheorem*{thmA}{Theorem A}
\newtheorem*{factA}{Fact}
\newtheorem*{thmB}{Theorem B}
\newtheorem*{thmC}{Theorem C}
\newtheorem*{thmD}{Theorem D}
\newtheorem*{thmE}{Theorem E}
\newtheorem*{cors}{Corollary}
\theoremstyle{definition}
\newtheorem{defn}[theorem]{Definition}
\theoremstyle{remark}
\newcommand{\gr}{{\operatorname{gr}}}
\newcommand{\cl}{\operatorname{Cl}}
\newcommand{\bd}{\operatorname{Bd}}
\newcommand{\Int}{\operatorname{Int}}
\newcommand{\Cal}[1]{\ensuremath{\mathcal{#1}}}
\newcommand{\Z}{\mathbb{Z}}
\newcommand{\N}{\mathbb{N}}
\newcommand{\Q}{\mathbb{Q}}
\newcommand{\R}{\mathbb{R}}
\newcommand{\DSig}{D_{\Sigma}}
\newcommand{\FSig}{F_{\sigma}}
\begin{document}
\title{How to avoid a compact set}
\thanks{The second author was partially supported by NSF grants DMS-1300402 and DMS-1654725. The first and the third author were partially supported by the European Research Council under the European Union's Seventh Framework Programme (FP7/2007-2013) / ERC Grant agreement no.\ 291111/ MODAG}

\subjclass[2010]{Primary 03C64 Secondary 03C45, 03E15, 28A05, 28A75, 28A80, 54F45}
\keywords{Hausdorff dimension, packing dimension, real additive group, expansions, tame geometry, Marstrand projection theorem, topological dimension, NIP, neostability}

\author{Antongiulio Fornasiero}
\address{Einstein Institute of Mathematics \\ The Hebrew University of Jerusalem\\ Edmond J. Safra Campus\\ Givat Ram. Jerusalem, 9190401, Israel}
\email{antongiulio.fornasiero@gmail.com}

\author{Philipp Hieronymi}
\address{Department of Mathematics\\University of Illinois at Urbana-Champaign\\1409 West Green Street\\Urbana, IL 61801}
\email{phierony@illinois.edu}
\urladdr{http://www.math.illinois.edu/\textasciitilde phierony}

\author{Erik Walsberg}
\email{erikw@illinois.edu}
\date{\today}

\maketitle

\begin{abstract}
A first-order expansion of the $\R$-vector space structure on $\R$ does not define every compact subset of every $\R^n$ if and only if topological and Hausdorff dimension coincide on all closed definable sets. Equivalently, if $A \subseteq \R^k$ is closed and the Hausdorff dimension of $A$ exceeds the topological dimension of $A$, then every compact subset of every $\R^n$ can be constructed from $A$ using finitely many boolean operations, cartesian products, and linear operations. The same statement fails when Hausdorff dimension is replaced by packing dimension.
\end{abstract}

\section{Introduction}

We study metric dimensions and general geometric tameness of definable sets in expansions of the ordered real additive group $(\R,<,+)$. This is an important part of the so called \emph{tameness program} first outlined by Miller \cite{Miller-tame}. As we hope our results are of interest to logicians and metric geometers alike,  we begin with an essentially logic free motivation for, and description of, the main results of this paper. We then describe some of the more technical results and their connections to notions from pure model theory for readers with background in logic.

\subsection*{For metric geometers} A \textbf{structure} on $\R$ is a sequence $\mathfrak{S} := (\mathfrak{S}_m)_{m=1}^{\infty}$ such that for each $m$:
\begin{enumerate}
\item $\mathfrak{S}_m$ is a boolean algebra of subsets of $\R^m$.
\item If $X \in \mathfrak{S}_m$ and $Y \in \mathfrak{S}_{n}$, then $X \times Y\in \mathfrak{S}_{m+n}$.
\item If $1\leq i \leq j \leq m$, then $\{ (x_1,\ldots,x_m) \in \R^m \ : \ x_i = x_j\} \in \mathfrak{S}_m$.
\item If $X\in \mathfrak{S}_{m+1}$, then the projection of $X$ onto the first $m$ coordinates is in $\mathfrak{S}_m$.
\end{enumerate}
Let $K$ be a subfield of $\R$.
We say a structure that satisfies the following three conditions is a \textbf{$K$-structure}.
\begin{enumerate}
\item[(5)] If $X \in \mathfrak{S}_m$ and $T : \R^m \to \R^n$ is $K$-linear, then $T(X)\in \mathfrak{S}_n$,
\item[(6)] If $1\leq i \leq j \leq m$, then $\{ (x_1,\ldots,x_m) \in \R^m\ : \ x_i < x_j\} \in \mathfrak{S}_m$.
\item[(7)] the singleton $\{ r\} \in \mathfrak{S}_1$ for every $r\in \R$.
\end{enumerate}
We say that $X\subseteq \R^n$ \textbf{belongs} to $\mathfrak{S}$ (or $\mathfrak{S}$ \textbf{contains} $X$)  if $X \in \mathfrak{S}_n$. We say $\mathfrak{S}$ \textbf{avoids} $X$ if $X \notin \mathfrak{S}_n$.\newline

Given $A \subseteq \R^n$, we let $\mathfrak{M}_K(A)$ be the smallest $K$-structure containing $A$. When we do not specify $K$, we mean that $K=\R$. In particular, $\mathfrak{M}(A)=\mathfrak{M}_{\R}(A)$.
Note that $\mathfrak{M}_K(\emptyset) \subseteq \mathfrak{M}_K(A)$ for any $A$.
It is easy to see that a solution set of a finite system of $K$-affine inequalities is in $\mathfrak{M}_K(\emptyset)$.
In particular every interval and every cartesian product of intervals is in $\mathfrak{M}_K(\emptyset)$.
Every polyhedron is in $\mathfrak{M}(\emptyset)$. \newline

\noindent It is natural to ask the following naive question:
\begin{center}
\emph{Given $A$, what can be said about $\mathfrak{M}(A)$?}
\end{center}
This is an important question in model theory in general and there are many classical results that provide answers to special cases. We give some examples.
\begin{itemize}
\item If $A$ is the solution set of a finite system of inequalities between affine functions then every element of $\mathfrak{M}(A)$ is a finite union of solution sets of affine inequalities.
This follows from the Fourier-Motzkin elimination for systems of linear inequalities.
This covers the case when $A$ is a polyhedron.
\item If $A$ is the solution set of a finite system of polynomial inequalities then every element of $\mathfrak{M}(A)$ is a finite union of solution sets of systems of polynomial inequalities.
This is a consequence of the Tarski-Seidenberg theorem, the real version of Chevalley's theorem on constructible sets.
This covers the case when $A$ is a zero set of a collection of polynomials.
\item Suppose that $A$ is a compact solution set of a finite system of inequalities between analytic functions.
Then every element of $\mathfrak{M}(A)$ is subanalytic.
In particular every element of $\mathfrak{M}(A)$ is a finite union of smooth submanifolds of Euclidean space.
This follows from Gabrielov's theorem of the complement, see \cite{subgab} for more information.
\end{itemize}
In the examples above $A$ is highly regular. However, regularity of $A$ does not guarantee regularity of $\mathfrak{M}(A)$. If $A$ is the graph of the sine function, or the set of integers, then $\mathfrak{M}(A)$ contains all closed sets by Hieronymi and Tychonievich \cite{HT}.\newline

\noindent In the present paper we ask the following:
\begin{center}
\emph{What can be said about $\mathfrak{M}(A)$ for a fractal $A$?}
\end{center}
Our main result is the following:

\begin{thmA}
Let $A \subseteq \R^n$ be closed and nonempty. If the Hausdorff dimension of $A$ exceeds the topological dimension of $A$, then $\mathfrak{M}(A)$ contains all compact subsets of all $\R^k$.
\end{thmA}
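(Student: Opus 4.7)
We first reduce to the case that $A$ is compact: by the countable stability of Hausdorff dimension and the sum theorem for topological dimension of closed sets, there exists $N$ such that the compact set $A \cap [-N,N]^n$ still has Hausdorff dimension exceeding its topological dimension, and this intersection lies in $\mathfrak{M}(A)$. Replacing $A$ by this intersection, we may assume $A$ is compact; write $d$ for its topological dimension, so $\dim_H A > d$.

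The main geometric tool we use is Marstrand's projection theorem: for almost every orthogonal projection $\pi : \R^n \to \R^d$, the image $\pi(A)$ has positive $d$-dimensional Lebesgue measure. Since $\pi$ is $\R$-linear and $A$ is compact, $\pi(A)$ is a compact member of $\mathfrak{M}(A)$, and the set $B := \pi(A) - \pi(A) = \pi(A - A)$ is likewise a compact member of $\mathfrak{M}(A)$ that, by Steinhaus's theorem, contains an open neighborhood of the origin in $\R^d$. Hence $\mathfrak{M}(A)$ defines a compact subset of $\R^d$ with nonempty interior.

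Next, our plan is to pass from this ``fat'' definable set to the definability of an infinite discrete subset of $\R$: granted that, the results of Hieronymi and Tychonievich \cite{HT} yield that $\mathfrak{M}(A)$ contains every closed subset of every $\R^k$, and in particular every compact one. For this step, we aim to exploit the full strength of Marstrand's theorem applied to a continuous family of projections $\pi_t$. This should produce a uniformly definable family of compact fat sets $B_t \subseteq \R^d$ whose metric and combinatorial variation in $t$ must reflect the strict inequality $\dim_H A > d$, for otherwise $A$ would behave like a tame, topological-dimension-$d$ set. From this variation we hope to extract a definable subset of $\R$ that fails to be a locally finite union of intervals, yielding an infinite discrete set in its frontier.

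The main obstacle is precisely this last extraction. Positive-measure projections alone are too weak, since packing dimension satisfies Marstrand-type projection theorems as well, yet the analogous statement with Hausdorff replaced by packing dimension is false according to the abstract. Consequently, the argument must exploit a genuinely Hausdorff-specific phenomenon --- plausibly through density points, tangent measures, or a finer slicing analysis --- to convert the fractal excess $\dim_H A - \dim_t A > 0$ into definability of an infinite discrete subset of $\R$. We expect the technical heart of the paper to lie precisely here.
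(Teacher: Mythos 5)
There is a genuine gap, and you have correctly located it yourself: everything after your Steinhaus step is a plan, not a proof. But the plan also points in the wrong direction. First, producing a definable compact subset of $\R^d$ with nonempty interior, where $d=\dim_T(A)$, yields nothing: boxes are already in $\mathfrak{M}(\emptyset)$, and a set of topological dimension $d$ can perfectly well project onto something with interior in $\R^d$ (take $[0,1]^d$). The paper's use of Marstrand is different: one first replaces $A$ by $A^k$ for large $k$ so that $\dim_H(A^k)>\dim_T(A^k)+1$ (using $k\dim_H(A)\le\dim_H(A^k)$ and $\dim_T(A^k)=k\dim_T(A)$), and then projects onto $\R^{\dim_T+1}$, i.e.\ onto a Euclidean space of dimension strictly larger than the topological dimension. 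The contradiction is not extracted from ``variation in $t$'' of a family of projections but from a dichotomy: either $(\mathcal{M}_\R,A)$ defines a dense $\omega$-orderable set --- in which case Theorem~C (which needs uncountably many definable scalars to run the Hieronymi--Tychonievich machine) gives all compact sets --- or it does not, in which case one proves a strong Baire category theorem for $\DSig$ sets (interior or Lebesgue null) and shows continuous definable maps cannot raise topological dimension of $\DSig$ sets; then $\rho(A^k)$ has interior in $\R^{\dim_T+1}$, forcing $\dim_T(A^k)\ge\dim_T(A^k)+1$, a contradiction. Your proposal contains no analogue of this tame-topology half, which is the technical heart.

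Two further points are factually off. You assert that defining an infinite discrete subset of $\R$ would finish the proof via \cite{HT}; over $(\R,<,+)$ this is false --- $(\R,<,+,\Z)$ avoids a compact set --- and what is actually needed is a \emph{dense} $\omega$-orderable set together with scalar multiplication by some $\lambda$ outside the field generated by that set. You also assert that packing dimension ``satisfies Marstrand-type projection theorems as well'' and therefore that the Hausdorff-specific input must lie elsewhere (density points, tangent measures). This is backwards: by J\"arvenp\"a\"a the Marstrand projection theorem \emph{fails} for packing dimension, and that failure is precisely why Theorem~A fails for packing dimension. Marstrand's theorem is the only Hausdorff-specific ingredient; no finer measure-theoretic analysis is needed. (Your opening reduction to compact $A$ is correct but unnecessary, since the Marstrand theorem applies to Borel sets and the $\DSig$ machinery already handles closed definable sets.)
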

\noindent  As topological dimension (see Definition \ref{defn:topdim}) is an integer, Theorem A shows that if $A$ is closed and has fractional Hausdorff dimension, then $\mathfrak{M}(A)$ contains all compact sets.
According to Mandelbrot \cite{Mandelbrot} a compact set is a \textbf{fractal} if its Hausdorff dimension exceeds its topological dimension. Thus our result can be restated as:
\begin{center}
\emph{A single fractal is enough to generate every compact set}.
\end{center}
It can be shown that an $\R$-structure that contains all compact sets contains all bounded Borel sets. Even more is true: every projective\footnote{Projective in sense of descriptive set theory. See \cite[Chapter V]{kechris}} subset of $[0,1]^k$ is in such a structure. Such sets are very complicated, even innocent questions like whether all projective sets are Lebesgue measureable are independent of ZFC. It is worth pointing out that such a structure still does not need to contain all closed sets. By Pillay, Scowcroft and Steinhorn \cite[Theorem 2.1]{PSS} $\mathfrak{M}(A)$ avoids the graph of multiplication on $\R$ whenever $A\subseteq \R^n$ is bounded.\newline


We obtain a stronger result for compact totally disconnected subsets of $\R^n$.
We rely on a straightforward modification of a special case of a theorem of Friedman and Miller \cite[Theorem A]{FM-sparse}.

\begin{factA}
Let $A \subseteq \R^n$ be compact and totally disconnected such that $T(A^k)$ is nowhere dense for every linear map $T : \R^{kn} \to \R$.
Then $\mathfrak{M}(A)$ avoids a compact set.
\end{factA}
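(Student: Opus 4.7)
The plan is to apply a linear analogue of Friedman and Miller's Theorem A in \cite{FM-sparse}. In their o-minimal setting, they show that if $\mathfrak{R}$ is a suitable tame base structure and $E$ is a closed discrete ``sparse'' subset of $\R$ (meaning each image $f(E^k)$ is nowhere dense for $\mathfrak{R}$-definable $f$), then the expansion $(\mathfrak{R},E)$ is definably tame enough to avoid some compact set. The special case to invoke takes $\mathfrak{R}$ to be the semilinear structure on $\R$; the straightforward modification is to replace their $\mathfrak{R}$-definable functions by $\R$-linear maps and their closed discrete $E$ by our compact totally disconnected $A$. Since a compact totally disconnected set has a basis of clopen sets, for the purposes of the structural induction it behaves essentially like a discrete predicate.

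First I would establish a structure theorem, mirroring \cite{FM-sparse}: every $\mathfrak{M}(A)$-definable subset of $\R^n$ is a finite boolean combination of open semilinear sets and sets of the form $T(A^k)$ for some $k$ and some $\R$-linear map $T\colon \R^{kn}\to \R^n$. The induction on formula complexity proceeds as in the polynomial case; the sparseness hypothesis, now applied to $\R$-linear $T$, is precisely what prevents the ``$A$-part'' of a definable set from accumulating to fill an open region, enabling the inductive step.

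Second, from the structure theorem every nowhere dense $\mathfrak{M}(A)$-definable subset of $\R$ is contained in a finite union of compact nowhere dense sets of the form $T(A^k)$. The family of finite boolean combinations of such sets, viewed inside the Hausdorff-metric hyperspace $\mathcal{K}([0,1])$, is a meager collection of nowhere dense compacta --- essentially a countable union of continuously parameterized finite-dimensional families. A Baire-category argument or direct diagonal construction then produces a compact nowhere dense $K\subseteq[0,1]$ outside $\mathfrak{M}(A)$, proving the statement.

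The main obstacle is the structure theorem: although the adaptation from the polynomial to the linear setting is ``straightforward'' as advertised, one must carefully rerun the Friedman-Miller quantifier-reduction argument in the semilinear cell-decomposition framework and verify that only $\R$-linear combinations, never more general semialgebraic ones, appear at each stage. The choice of the missing compact set is by comparison routine once the structure theorem is in hand.
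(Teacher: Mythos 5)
Your overall plan --- run a linear Friedman--Miller argument and then exhibit a compact set that the resulting tame structure cannot define --- is in the same spirit as the paper's proof, but both of your main steps have genuine gaps. First, the structure theorem you propose is not what Friedman and Miller prove, and it is false as stated for infinite $A$: an $\mathfrak{M}(A)$-definable set need not be a finite boolean combination of open semilinear sets and sets $T(A^k)$. Each $T(A^k)$ is compact and boolean combinations of semilinear sets are semilinear, so (taking $n=1$) the definable set $\{(x,y): y\in A,\ x\le y\}$, which coincides with $(-\infty,-M)\times A$ on $\{x<-M\}$ once $M$ is large enough to swallow the finitely many compact pieces, already escapes this normal form; more generally, quantifying over $A$ produces unions of infinitely many semilinear fibres indexed by tuples from $A$, not boolean combinations. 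What the Friedman--Miller induction actually yields is a presentation of every definable set as $\bigcup_{s\in S}Y_s$ for a semilinear family $(Y_s)$ and some $S\subseteq E^k$, and the usable consequence is their Theorem A (the paper's Fact~\ref{HF} and Corollary~\ref{linearHF}): every definable subset of $\R$ either has interior or is nowhere dense. Note also that their theorem concerns $E\subseteq\R$ (and $E$ need not be closed and discrete, nor is their conclusion directly ``avoids a compact set''), so you must pass from $A\subseteq\R^n$ to the set of coordinates of points of $A$ and transfer the sparseness hypothesis, as the paper does in Lemma~\ref{up}; here this is easy because a linear image of a power of the coordinate set is a finite union of linear images of powers of $A$.

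Second, your meagerness claim is not justified by the parameterization you give: the maps $T$ range over a continuum, and a countable union of continuous images of finite-dimensional spaces need not be meager in $\mathcal{K}([0,1])$, which is a Peano continuum and hence itself a continuous image of $[0,1]$. The missing ingredient is that every $T(A^k)$ is Lebesgue null: if some $T(A^k)$ had positive measure, then by Steinhaus's theorem $T(A^k)-T(A^k)$, which is again a linear image of a power of $A$, would have interior, contradicting the hypothesis. Granting that, a fat Cantor set is a nowhere dense compact set contained in no finite union of the $T(A^k)$, so no category argument is needed at all --- but this Steinhaus/Marstrand-type input is exactly what the paper supplies in proving (iii)$\Rightarrow$(iv) of Theorem~\ref{seven}, and it is absent from your write-up. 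The paper's endgame is also more direct: from the interior-or-nowhere-dense dichotomy it follows that no dense $\omega$-orderable set is definable, and if every compact set were definable then the closure of the graph of a bijection $\{1/n: n\ge 1\}\to\Q\cap[0,1]$ would be definable, hence so would the dense and codense set $\Q\cap[0,1]$; this contradiction exhibits the avoided compact set explicitly.
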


In fact, it follows from Theorem A that $\mathfrak{M}(A)$ avoids the classical middle-thirds Cantor set for such an $A$.
We obtain a necessary\emph{ and} sufficient condition on $A$ for $\mathfrak{M}(A)$ to avoid a compact set by combining the previous fact with Theorem A and some standard facts about Hausdorff dimension.


\begin{thmB} Let $A\subseteq \R^n$ be compact totally disconnected. Then $\mathfrak{M}(A)$ avoids a compact set if and only if the Hausdorff dimension of $A^k$ is zero for every $k$.
\end{thmB}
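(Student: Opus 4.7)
The plan is to handle the two implications separately, using Theorem~A for the forward direction and Fact~A for the reverse.

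For the forward direction ($\Rightarrow$) I would argue by contrapositive. Suppose the Hausdorff dimension of $A^k$ is positive for some $k \geq 1$. Since $A$ is compact and totally disconnected, so is $A^k$; in particular $A^k$ is a closed subset of $\R^{kn}$ whose topological dimension is $0$. The Hausdorff dimension of $A^k$ therefore strictly exceeds its topological dimension, so Theorem~A applied to $A^k$ shows that $\mathfrak{M}(A^k)$ contains every compact subset of every $\R^m$. The cartesian product axiom for structures gives $A^k \in \mathfrak{M}(A)$, whence $\mathfrak{M}(A^k) \subseteq \mathfrak{M}(A)$, so $\mathfrak{M}(A)$ contains every compact set, contradicting the hypothesis that $\mathfrak{M}(A)$ avoids a compact set.

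For the reverse direction ($\Leftarrow$) assume the Hausdorff dimension of $A^k$ is zero for every $k \geq 1$. By Fact~A, it suffices to verify, for each $k$ and each $\R$-linear map $T \colon \R^{kn} \to \R$, that $T(A^k)$ is nowhere dense in $\R$. Fix such $k$ and $T$. Since $T$ is Lipschitz, the classical fact that Lipschitz maps do not increase Hausdorff dimension gives that $T(A^k)$ has Hausdorff dimension $0$. Moreover $T(A^k)$ is the continuous image of a compact set, hence compact and therefore closed in $\R$. As any nondegenerate interval in $\R$ has Hausdorff dimension $1$, the closed set $T(A^k)$ has empty interior; being closed with empty interior, it is nowhere dense.

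The main obstacle is essentially absent once Theorem~A and Fact~A are in hand: the argument reduces in each direction to translating a Hausdorff-dimension hypothesis on the powers $A^k$ into, respectively, the hypothesis of Theorem~A for some power or the projection hypothesis of Fact~A. The only geometric-measure-theoretic input needed is the Lipschitz non-increase of Hausdorff dimension, which is classical and applies to every $\R$-linear map $\R^{kn} \to \R$.
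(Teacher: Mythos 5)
Your proof is correct, and it is essentially the derivation the paper announces in its introduction: combine Theorem A with Fact A and standard properties of Hausdorff dimension. Both directions check out. In the forward direction, $A^k$ is compact and totally disconnected, hence closed with $\dim_T(A^k)=0$, so positive Hausdorff dimension of some $A^k$ places it in the hypothesis of Theorem A, and $\mathfrak{M}(A^k)\subseteq\mathfrak{M}(A)$ finishes the contradiction. In the reverse direction, the Lipschitz bound $\dim_H T(A^k)\leq\dim_H A^k=0$ together with compactness of $T(A^k)$ gives nowhere density, which is exactly the hypothesis of Fact A. The paper's formal proof (the six-way equivalence in Section 7) takes a more self-contained route: it proves the forward direction not by citing Theorem A but by chaining through \emph{`does not define a dense $\omega$-orderable set'} (Theorem C), then \emph{`$T(A^k)$ is nowhere dense for every linear $T$'} (via the result that definable continuous maps do not raise topological dimension on $D_\Sigma$ sets), then \emph{`$\dim_H(A^k)=0$ for all $k$'} (by contrapositive, via the Marstrand projection theorem plus Steinhaus's theorem on difference sets); and it proves the reverse direction by reducing to the one-variable Friedman--Miller dichotomy, which requires an extra lemma passing from $A\subseteq\R^n$ to the set of coordinates of its elements, since the quoted Friedman--Miller corollary only covers subsets of $\R$. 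Your version is shorter and more modular because it black-boxes Theorem A and Fact A (the latter being stated in the paper without proof for $A \subseteq \R^n$, so your reliance on it is legitimate but hides the coordinate-set reduction); the paper's version buys the additional equivalent characterizations, in particular the interior-or-nowhere-dense dichotomy for every subset of $\R$ definable in the expansion by all subsets of all cartesian powers of $A$.
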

\noindent It follows that $\mathfrak{M}(A)$ avoids a compact set when $A$ is countable and compact.\newline

Theorem A is essentially optimal. Using Wei, Wen and Wen \cite{WWW}, we observe that there are compact subsets of $\R$ with positive packing dimension all of whose cartesian powers have Hausdorff dimension zero. Thus we can deduce from Theorem B that Theorem A fails when Hausdorff dimension is replaced by packing dimension. It follows that a set with non-integer packing dimension is not necessarily enough to generate all compact sets. We will see that the reason why Theorem A holds for Hausdorff dimension and fails for packing dimension goes back to the fact that the Marstrand projection theorem (see Fact \ref{marstrand} for a statement) holds for Hausdorff dimension and fails for packing dimension.  The failure of Theorem A for packing dimension is in contrast to known results for larger structures. If $\mathfrak{S}$ is a structure that contains both the graph of multiplication on $\R$ and a closed set $A\subseteq \R^n$ for which packing dimension exceeds topological dimension, then $\mathfrak S$ contains every closed set by Hieronymi and Miller \cite{HM}. Indeed in that setting, it is enough for the Assouad dimension of $A$ to exceed the topological dimension of $A$.\newline

Let $K$ be a countable subfield of $\R$. In general, Theorem A is no longer true when \emph{$\mathfrak{M}(A)$} is replaced by \emph{$\mathfrak{M}_{K}(A)$}. For example, when $K$ is the field of rationals and $A$ is the middle-thirds Cantor set, it follows from B\"uchi \cite{Buchi} that $\mathfrak{M}_{\Q}(A)$ avoids a compact set (see Boigelot, Rassart and Wolper \cite{BRW} for details). Indeed, the statement that $\mathfrak{M}_{\Q}(A)$ avoids a compact set, holds not only for the middle-thirds Cantor set, but for a large class of fractals, called automatic fractals by Adamczewski and Bell \cite{AB}. Roughly, a compact set $A \subseteq \R^n$ is \textbf{automatic} if there is $b\in \N_{\geq 2}$ such that the set of base $b$ representations of elements of $A$ is recognized by a B\"uchi automaton\footnote{A \textbf{B\"uchi automaton} is a finite state automaton that accepts infinite input sequences. For details, see for example Khoussainov and Nerode \cite{automata}}. Many famous fractals such as the middle-thirds Cantor set, the Sierpi\'nski carpet, and the Menger sponge are automatic. Metric properties of such fractals have recently been studied from a logical viewpoint by Charlier, Leroy, and Rigo \cite{CLR}. This paper continues their inquiry on the connection between metric geometry and definability, albeit with a different focus.\newline

 Readers familiar with first-order logic have surely already observed that the sets in $\mathfrak{M}_{K}(A)$ are exactly the definable sets in a certain expansion of the real ordered additive group. To make this precise, given a subfield $K$ of $\R$, we denote by $\mathcal{M}_K$ the expansion of $(\R,<,+)$ by all functions $\R \to \R$ of the form $x \mapsto \lambda x$ for $\lambda \in K$. A subset of $X \subseteq \R^n$ is in $\mathfrak{M}_K(A)$ if and only if $X$ is definable with parameters in $(\mathcal{M}_K,A)$. For various reasons model theorists prefer talking about expansions and definability rather than about structures. We follow this custom throughout the rest of the paper. Readers unfamiliar with these notions can take
\begin{itemize}
\item \emph{`an expansion of $\mathcal{M}_K$'} to mean \emph{`a structure containing all sets in $\mathfrak{M}_K$'},
\item  \emph{`definable in $(\mathcal{M}_K,A)$'} to mean \emph{`belongs to $\mathfrak{M}_K(A)$'}.
\end{itemize}
Note that $\mathcal{M}_{\mathbb{Q}} = (\R,<,+)$ and $\mathcal{M}_{\mathbb{Q}}(A) = (\R,<,+,A)$.
We refer the reader to Marker \cite{Marker} for an introduction to first-order logic.

\subsection*{For logicians}In proving Theorem A and B, we produce further results of interest to logicians. These results fit into the overall research program
of studying the consequences of model-theoretic tameness on the topology and geometry of definable sets in expansions of $(\R,<,+)$. We continue our investigation, begun in \cite{HW-Monadic}, of the connection between combinatorial tameness properties invented by Shelah  (neostability)  and geometric tameness properties of subsets of Euclidean space as considered by Miller \cite{Miller-tame} (tame geometry).\newline

We let $\Cal B$ be the two-sorted structure $(\mathcal{P}(\mathbb{N}), \mathbb{N},\in,+1)$, where $\mathcal P(\mathbb{N})$ is the power set of $\mathbb{N}$. In \cite{HW-Monadic} we initiated a study of structures that do not define an isomorphic copy of $\Cal B$. A structure that interprets $\Cal B$ is easily seen to violate all known Shelah-style combinatorial tameness properties such as $\operatorname{NIP}$ or $\operatorname{NTP2}$ (see e.g. Simon \cite{Simon-Book} for definitions). Thus the tameness condition \emph{`does not define an isomorphic copy of $\Cal B$'} can be seen as a substantial generalization of the two aforementioned tameness notions. Here we consider a further generalization of this notion. Let $\mathcal R$ be a first-order expansion of $(\R,<)$. A definable subset of $\R^n$ is \textbf{$\omega$-orderable} if it is either finite or admits a definable ordering with order type $\omega$. We say $\mathcal R$ defines a dense $\omega$-orderable set if it defines an $\omega$-orderable subset of $\R$ that is dense in some open interval. By \cite[Theorem A]{HW-Monadic} $\Cal R$ defines an isomorphic copy of $\Cal B$ whenever $\Cal R$ defines a dense $\omega$-orderable set. Thus the statement \emph{`does not define a dense $\omega$-orderable set'} is implied by \emph{`does not define an isomorphic copy of $\Cal B$'}. However, the converse is not true. There is a closed subset $C$ of $\R$ such that $(\R,<,+,\cdot,C)$ defines an isomorphic copy of $\Cal B$, but does not define a countable dense subset of $\R$ (see Friedman et al \cite{FKMS} or \cite{H-TameCantor}).\newline

At first glance the tameness notion of not defining a dense $\omega$-orderable set looks arbitrary. The relevance of this new condition arises from the observation that in expansions that define scalar multiplication by uncountably many numbers, the statement \emph{`does not define a dense $\omega$-orderable set'} is equivalent to the arguably weakest tameness property.

\begin{thmC} Let $K$ be an uncountable subfield of $\R$ and $\Cal R$ be an expansion of $\mathcal{M}_K$. Then the following are equivalent:
\begin{enumerate}
\item $\Cal R$ does not define a dense $\omega$-orderable set.
\item $\Cal R$ avoids a compact set.
\end{enumerate}
\end{thmC}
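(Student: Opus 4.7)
I would prove the two implications separately, working in contrapositive form.

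\emph{Direction (2) $\Rightarrow$ (1).} Assume $\mathcal R$ defines a dense $\omega$-orderable set $D \subseteq \R$ (dense in some open interval $I$, with definable $\omega$-order $\prec$), and show $\mathcal R$ contains every compact set. By \cite[Theorem A]{HW-Monadic}, $\mathcal R$ defines an isomorphic copy of $\mathcal B = (\mathcal P(\mathbb N), \mathbb N, \in, +1)$, so arbitrary subsets of $\mathbb N$ become available as parameters. The order $\prec$ gives a definable bijection $\mathbb N \to D$, and by products one obtains definable bijections onto the sets $D^k \subseteq I^k$, each dense in $I^k$. For any compact $C \subseteq I^k$, its sequence of ``$2^{-n}$-dyadic approximants''---finite subsets of $D^k$ at scale $2^{-n}$---is encoded as a single element of $\mathcal P(\mathbb N)$. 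Using this parameter I define $C$ as the intersection over $n$ of shrinking neighborhoods of these approximants; this is first-order in $\mathcal R$ since the topological closure is expressible from $<$ and $+$. Finally, $K$-affine maps (we have $\mathbb Q \subseteq K$) transport $I^k$ to any bounded region of $\R^k$.

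\emph{Direction (1) $\Rightarrow$ (2).} Assume $\mathcal R$ contains every compact set and produce a definable dense $\omega$-orderable set. Consider
$$E = \{(1/n,\,m/n) : n \geq 1,\ 0 \leq m \leq n\} \cup (\{0\} \times [0,1]) \subseteq [0,1]^2.$$
A direct check confirms $E$ is compact (it is the closure of its first component), hence $E \in \mathcal R$. The definable set $E' = E \cap ((0,1] \times \R)$ is precisely $\{(1/n, m/n) : n \geq 1,\ 0 \leq m \leq n\}$, and its projection $\pi_2(E') = \mathbb Q \cap [0,1]$ is definable in $\mathcal R$ and dense in $(0,1)$. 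The definable function $x_q := \max\{x : (x,q) \in E'\}$ equals $1/n_q$, where $n_q$ is the lowest-terms denominator of $q$. The relation
$$q_1 \prec q_2 \ \Longleftrightarrow \ x_{q_1} > x_{q_2} \ \text{ or } \ (x_{q_1} = x_{q_2} \ \text{and} \ q_1 < q_2)$$
is a definable total order on $\mathbb Q \cap [0,1]$ of order type $\omega$: it enumerates $\mathbb Q \cap [0,1]$ by increasing lowest-terms denominator and, within each denominator, by $<$. Hence $\mathbb Q \cap [0,1]$ is a dense $\omega$-orderable set definable in $\mathcal R$.

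\emph{Main obstacle.} Once one spots the compact set $E$, direction (1) $\Rightarrow$ (2) is brief. The substantive work is in (2) $\Rightarrow$ (1): one must write down a first-order formula in $\mathcal R$ that, uniformly in a single $\mathcal P(\mathbb N)$-parameter obtained through the interpreted $\mathcal B$, reconstructs an arbitrary compact set from its dyadic approximants along $D^k$. Organizing this encoding--decoding precisely---both the combinatorial coding of a compact set as a single element of $\mathcal P(\mathbb N)$, and the definable decoding that quantifies over the interpreted $\mathcal B$-sort---is where I expect most of the care to be needed.
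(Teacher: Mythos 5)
Your contrapositive of $(1)\Rightarrow(2)$ --- recovering a dense $\omega$-orderable set from a single well-chosen compact set --- is correct and is essentially the paper's own argument: the paper uses the closure of the graph of a bijection $\{1/n : n\geq 1\}\to\Q\cap[0,1]$ and recovers the graph by intersecting with $\{x>0\}$; your set $E$ and the ordering by reduced denominator are a minor variant of the same idea.

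The contrapositive of $(2)\Rightarrow(1)$, however, has a genuine gap, and one can see this from the outside: your argument for ``defines a dense $\omega$-orderable set $\Rightarrow$ defines every compact set'' never uses the uncountability of $K$, yet that implication is false for countable $K$. The paper's own example: for $\lambda$ a quadratic irrational and $K=\Q(\lambda)$, the structure $(\mathcal{M}_K,\Z)$ defines the dense $\omega$-orderable set $\Z+\lambda\Z$ --- and hence, by \cite[Theorem A]{HW-Monadic}, defines an isomorphic copy of $\mathcal{B}$ --- but it avoids a compact set. So ``interpret $\mathcal{B}$ and use arbitrary subsets of $\N$ as parameters'' cannot by itself produce all compact sets. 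The step that actually fails is the decoding. The interpreted $\mathcal{B}=(\mathcal{P}(\N),\N,\in,+1)$ gives you uniformly definable access to arbitrary \emph{monadic} predicates on its $\N$-sort, but your code for a compact set is in effect a subset of $\N\times\N$ (via a pairing $\langle n,i\rangle$ linking the stage $n$ to the points of the $n$-th approximant), and you additionally need a definable map $n\mapsto 2^{-n}$ from the $\N$-sort into $\R$. Neither a pairing function nor such a sequence of radii is definable inside $\mathcal{B}$ (its theory is the decidable monadic second-order theory of one successor, in which even addition of first-order elements is undefinable), and whether $\mathcal{R}$ supplies them is exactly the point at issue. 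The paper closes precisely this hole: it proves the stronger statement that a dense $\omega$-orderable set $D$ together with definable multiplication by a single $\lambda\in\R\setminus\Q(D)$ yields all compact sets, by using $\lambda$ to construct the decoding function $g:\R^3\times D\to D$ required by the Hieronymi--Tychonievich criterion (Proposition~\ref{prop:htplus}); the uncountability of $K$ enters only to guarantee that such a $\lambda$ exists in $K$, since the field $\Q(D)$ generated by a countable set is countable. To repair your proof you must isolate and exploit this extra scalar; the interpretation of $\mathcal{B}$ alone is not enough.
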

\noindent  We say that an expansion $\Cal R$ of $(\R,<)$ \textbf{avoids a compact set} if there is a compact subset of $\R^n$ not definable in $\Cal R$.
Observe that by Pillay, Scowcroft and Steinhorn \cite{PSS} we cannot replace $(2)$ with `\textit{$\Cal R$ avoids a closed set}'. In the case when $\Cal R$ expands the real field, Theorem C follows easily from \cite[Theorem 1.1]{discrete} and can therefore be recognized as a strong generalization of that result. An expansion of $(\R,<,+)$ that defines every compact set interprets second-order arithmetic. Thus the requirement of avoiding a compact set is as weak as a reasonable notion of model-theoretic tameness can be. The equivalence above fails when $K$ is countable. For example, let $\lambda$ be a quadratic irrational and $K = \mathbb{Q}(\lambda)$. Then $\mathbb{Z} + \lambda \mathbb{Z}$ is an $(\mathcal{M}_K,\Z)$-definable dense $\omega$-orderable set. By \cite{H-multiplication}, $(\mathcal{M}_K,\Z)$ avoids a compact set. It is worth pointing out that while $(\mathcal{M}_K,\Z)$ does not satisfy any Shelah-style combinatorial tameness property, its theory is still decidable and is in fact bi-interpretable with the theory of $\mathcal B$. Thus there are expansions of $(\R,<,+)$ that define dense $\omega$-orderable sets and are model-theoretically tame in some sense. Other examples of such structures are given in \cite{CLR} and \cite{BRW}. We regard this as evidence that this notion of tameness is worth studying in its own right.\newline

Let $\Cal R$ be an expansion of $(\R,<,+)$ that does not define a dense $\omega$-orderable set. Here we continue the study of tame topology in such structures. In the following \emph{`definable'} means definable in $\Cal R$, possibly with parameters. Most of our results concern $\DSig$ sets, a definable analogue of $\FSig$ sets. These sets were first studied by Miller and Speissegger \cite{MS99}. Here we use a slight modification of their original definition that first appeared in Dolich, Miller, and Steinhorn \cite{DMS1}.

\begin{defn} We say that $A\subseteq \R^n$ is $\boldsymbol{\DSig}$ if there is a definable family $\{ X_{r,s} : r,s > 0\}$ such that
 for all $r,r',s,s'> 0$:
\begin{itemize}
\item[(i)] $X_{r,s}$ is a compact subset of $\R^n$,
\item[(ii)] $X_{r,s} \subseteq X_{r',s'}$ if $r\leq r'$ and $s \geq s'$,
\item[(iii)] $A = \bigcup_{r,s} X_{r,s}$.
\end{itemize}
We say that the family $\{ X_{r,s} : r,s > 0\}$ witnesses that $A$ is $D_\Sigma$.
\end{defn}

Note in particular that closed definable sets are $\DSig$.
It is easy to see that the image of a $\DSig$ set under a continuous definable function is $\DSig$.
 It is not difficult to show that open definable sets and boolean combinations of open definable sets are $\DSig$ (see \cite[1.10]{DMS1}). Every $\DSig$ set is $\FSig$.
The following Theorem is crucial.

\begin{thmD} Let $A \subseteq \R^n$ be $\DSig$. Then the following are equivalent:
\begin{itemize}
\item $A$ does not have interior,
\item $A$ is nowhere dense,
\item $A$ has $n$-dimensional Lebesgue measure zero.
\end{itemize}
\end{thmD}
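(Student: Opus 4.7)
The two easy implications are that both nowhere density and $n$-dimensional Lebesgue measure zero each imply empty interior. The substance of Theorem~D is the converse direction for $\DSig$ sets, which I prove using the standing hypothesis that $\Cal R$ defines no dense $\omega$-orderable subset of $\R$. The overall plan is to reduce to a one-dimensional key lemma via Fubini and induction on $n$, and to prove the one-dimensional case by extracting a definable dense $\omega$-orderable subset of an interval from the forbidden behaviour.

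Fix a witnessing family $\{X_{r,s}\}$ for $A$ and set $X_k := X_{k,1/k}$, so $A = \bigcup_k X_k$ is an increasing union of compact sets. Empty interior of $A$ forces each $X_k$ to be compact with empty interior, hence nowhere dense. The one-dimensional key lemma I aim to prove is that every definable compact nowhere dense $X \subseteq \R$ is scattered (equivalently, has empty perfect kernel). To prove this, suppose instead $X$ has nonempty perfect kernel $P$, let $[a,b]$ be its convex hull, and decompose $(a,b) \setminus P = \bigsqcup_\al I_\al$ into open intervals. The plan is to produce a definable dense $\omega$-orderable subset of $(a,b)$ using the $I_\al$: ordering distinguished points associated to the $I_\al$ by the length of the underlying interval, with the $\R$-order as tiebreaker, gives order type $\omega$ since only finitely many $I_\al$ exceed any given positive length. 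Midpoints of the $I_\al$ cluster at every point of $P$ by perfectness but fail to fill the interiors of the $I_\al$; to ensure density in $(a,b)$ I expect to augment the midpoint set with further definable selections drawn from the $\DSig$ family structure (for instance, the complementary decompositions of the approximations $X_k$, or intersections with translates of $X$) so that every subinterval of $(a,b)$ contains one of the chosen points.

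Given the key lemma, each $X_k \subseteq \R$ is scattered and countable, so $A$ is countable and in particular of measure zero; a further application of the same ideas, producing a definable dense $\omega$-orderable set out of a $\DSig$ set that is countable and dense in an interval, handles nowhere density of $A$ in dimension one. For $n \geq 2$ I induct on $n$. Given $\DSig$ $A \subseteq \R^n$ with empty interior, analyze slices $A_c := \{y \in \R^{n-1} : (c,y) \in A\}$, each $\DSig$ definable with parameter $c$. By the inductive hypothesis $A_c$ has empty interior iff it has measure zero, so by Fubini $A$ has measure zero provided the definable set $E := \{c : A_c \text{ has nonempty interior}\}$ has measure zero. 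A definable-selection argument rules out $E$ containing an interval, since otherwise one could uniformly pick open boxes in the slices over such an interval and assemble them into an open box inside $A$, contradicting empty interior; the one-dimensional case applied to $E$ then gives $E$ of measure zero, whence $A$ has measure zero, and nowhere density is handled analogously. The main obstacles I foresee are executing the distinguished-point construction to obtain genuine density in the one-dimensional key lemma, and verifying that the auxiliary set $E$ is itself $\DSig$ so that the induction applies cleanly.
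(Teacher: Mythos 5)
Your higher-dimensional induction is broadly the right skeleton (slice, apply the inductive hypothesis fibrewise, use Fubini and the Kuratowski--Ulam fact for $F_\sigma$ sets, and check that the exceptional set $E$ of fibres with interior is itself $\DSig$ --- this is exactly Fact~\ref{fact:dimdsig} and the structure of the proofs of Theorem~\ref{SBCT} and Proposition~\ref{lmeasure}). But the one-dimensional key lemma on which everything rests is false, and this is a fatal gap. It is not true that in a structure defining no dense $\omega$-orderable set every definable compact nowhere dense subset of $\R$ is scattered. The set $E_S$ constructed in Section~7 is a concrete counterexample: it is compact, nowhere dense, perfect and uncountable (hence has nonempty perfect kernel), yet $\dim_H(E_S^k)=0$ for all $k$, so by Theorem~\ref{seven} the structure $(\mathcal{M}_\R,E_S)$ avoids a compact set and therefore defines no dense $\omega$-orderable set. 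Consequently your intended corollary --- that every $\DSig$ subset of $\R$ without interior is countable --- is also false, and no amount of "augmenting the midpoint set" can rescue the construction: the midpoints of the complementary intervals of a perfect nowhere dense set $P$ accumulate only on $P$, which is nowhere dense, so they are not dense in any open interval; a genuinely dense definable $\omega$-orderable set simply need not exist in this situation. (Even if scatteredness held, countability would only give that $A$ is meager and null, not nowhere dense, so the second equivalence would still need a separate argument.)

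The paper's actual route for $n=1$ does not attempt any such structural classification. It imports two results from \cite{HW-Monadic}: Fact~\ref{fact:oldunary}, that a union of a definable monotone two-parameter family of nowhere dense subsets of $\R$ is nowhere dense (this is where the $\DSig$ witnessing family and the no-dense-$\omega$-orderable-set hypothesis do their real work, replacing your countability argument), and Fact~\ref{fact:oldunarylebesgue}, that a nowhere dense definable subset of $\R$ is Lebesgue null. Together these give Corollary~\ref{oldunary}, the base case. If you want a self-contained proof you would need to reprove those two lemmas; their arguments extract a dense $\omega$-orderable set from a \emph{family} of nowhere dense sets whose union is somewhere dense, not from a single nowhere dense set, and that is the idea your proposal is missing.
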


Suppose $\{ X_{r,s} : r,s > 0\}$ is a definable family of compact sets that witnesses that $A$ is $D_\Sigma$.
It follows from the Baire Category Theorem that if $A$ has interior then some $X_{r,s}$ has interior.
Theorem D implies a strong definable form of the Baire Category Theorem:
\begin{cors}
Suppose that $\{ X_{r,s} : r,s> 0 \}$ is a definable family of closed subsets of $\R^n$ such that $X_{r,s} \subseteq X_{r',s'}$ when $r \leq r'$ and $s \geq s'$.
If the union of the $X_{r,s}$ is somewhere dense, then there are $r',s' > 0$ such that $X_{r',s'}$ has interior.
\end{cors}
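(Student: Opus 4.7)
My plan is to reduce to Theorem D by building a $\DSig$ witness from the given family, apply Theorem D to produce interior in the union, and then use the Baire category theorem on a countable cofinal subfamily to pin down a single member with interior.

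First, since the $X_{r,s}$ are only assumed closed rather than compact, the family itself is not a $\DSig$ witness. I would truncate each $X_{r,s}$ by a sup-norm ball: define
\[
  Y_{r,s} := X_{r,s} \cap \{ x \in \R^n : -r \leq x_i \leq r \text{ for } i = 1, \ldots, n \}.
\]
Each $Y_{r,s}$ is compact, and the sup-norm ball is definable in $(\R,<,+)$ (with parameter $r$) using only order and additive inverse, so $\{Y_{r,s} : r,s > 0\}$ is a definable family in $\mathcal R$. Monotonicity carries over: if $r \leq r'$ and $s \geq s'$, then $X_{r,s} \subseteq X_{r',s'}$ and the sup-norm ball of radius $r$ is contained in that of radius $r'$, so $Y_{r,s} \subseteq Y_{r',s'}$. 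The union is unchanged: any $x \in X_{r_0,s_0}$ lies in $Y_{r, s_0}$ as soon as $r \geq \max\{r_0, |x_1|, \ldots, |x_n|\}$. Hence $A := \bigcup_{r,s > 0} X_{r,s}$ is $\DSig$.

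Since $A$ is somewhere dense, $A$ is not nowhere dense, so Theorem D forces $A$ to have nonempty interior. On the other hand, by monotonicity of the original family, $A = \bigcup_{n \geq 1} X_{n,1/n}$, because any $X_{r_0,s_0}$ is contained in $X_{n,1/n}$ as soon as $n \geq \max\{r_0, 1/s_0\}$. This expresses $A$ as a countable union of closed subsets of the Baire space $\R^n$. If every $X_{n,1/n}$ had empty interior, each would be nowhere dense (being closed), so $A$ would be meager, contradicting that $A$ has nonempty interior. Thus some $X_{n,1/n}$ has nonempty interior; setting $r' := n$ and $s' := 1/n$ completes the proof.

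The substantive step is the reduction to Theorem D; once $A$ is known to be $\DSig$, the chain ``somewhere dense $\Rightarrow$ has interior (Theorem D) $\Rightarrow$ some member of a countable cofinal subfamily has interior (Baire)'' is routine. No property of the underlying subfield $K$ beyond the availability of order and addition is used, since the truncating balls are defined in $(\R,<,+)$.
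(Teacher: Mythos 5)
Your proof is correct and takes essentially the same route the paper intends: truncate by definable cubes to exhibit the union as a $\DSig$ set, invoke Theorem D to get interior, and then apply the Baire category theorem to the countable cofinal subfamily $\{X_{n,1/n}\}_{n\geq 1}$ of closed sets. The paper only sketches this (it notes the Baire step for a compact witnessing family); your write-up supplies the closed-to-compact reduction and the cofinality argument explicitly, with no gaps.
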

Our proof of Theorem D closely follows an argument from \cite{MS99}. As a consequence we show that topological dimension is well-behaved on $\DSig$ sets.
Let $\dim_T(A)$ be the topological dimension of $A$.

\begin{thmE} Let $A\subseteq \R^m$ and $B\subseteq \R^n$ be $\DSig$ sets. Let $f : \R^m \to \R^n$ be a continuous definable function. Then
\begin{enumerate}
\item $\dim_T(A)=\dim_T(\cl(A))$ and $\dim_T \bd(A) < m$.
\item $\dim_T A\geq \dim_T f(A)$,
\item if there is a $d\in \N$ such that $\dim_T f^{-1}(x) = d$ for all $x\in B$, then
\[
\dim_T f^{-1}(B) = \dim_T B + d.
\]
\end{enumerate}
\end{thmE}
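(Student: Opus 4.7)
Plan. Throughout let $\{X_{r,s}:r,s>0\}$ witness that $A$ is $\DSig$.

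For part (1), I would first prove $\dim_T\bd(A)<m$. The set $\bd(A)=\cl(A)\setminus\inte(A)$ is closed and definable, hence $\DSig$; by the classical fact that a closed subset of $\R^m$ has topological dimension $m$ if and only if it has nonempty interior, it suffices to show $\bd(A)$ has empty interior. Suppose otherwise, and choose a definable open box $V\subseteq\R^m$ with compact closure $\cl(V)\subseteq\bd(A)$. Then $A\cap\cl(V)$ is $\DSig$, witnessed by $\{X_{r,s}\cap\cl(V)\}$, and is dense in $V$ since $V\subseteq\cl(A)$. The Corollary to Theorem~D yields $r',s'$ with $X_{r',s'}\cap\cl(V)$ of nonempty interior $W$; but $W\subseteq A$ forces $W\subseteq\inte(A)$, while $W\subseteq\cl(V)\subseteq\bd(A)$, contradicting $\inte(A)\cap\bd(A)=\emptyset$. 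For the equality $\dim_T A=\dim_T\cl(A)$, monotonicity gives one direction, and the inclusion $\cl(A)\subseteq A\cup\bd(A)$ combined with the sum theorem for two $F_\sigma$ sets yields $\dim_T\cl(A)\leq\max(\dim_T A,\dim_T\bd(A))$. I would close the gap by induction on $m$, applying the inductive hypothesis to a definable cover of the closed nowhere-dense set $\bd(A)$ by graphs of continuous definable functions on open subsets of coordinate hyperplanes; such a cover is extracted by iterating, along each coordinate direction, the Baire-type argument above.

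For part (2), the image $f(A)=\bigcup_{r,s}f(X_{r,s})$ is $\DSig$ since continuous images of compact sets are compact and the family inherits monotonicity. Form the graph $G=\gr(f|_A)\subseteq\R^{m+n}$; it is $\DSig$ and homeomorphic to $A$ via projection to the first $m$ coordinates, so $\dim_T G=\dim_T A$. Since $f(A)$ is the projection of $G$ onto the last $n$ coordinates, it suffices to show that a coordinate projection cannot strictly increase topological dimension on a $\DSig$ set. I would establish this by induction on codimension, reducing to a single projection $\pi\colon\R^{k+1}\to\R^k$. Suppose, toward contradiction, $\dim_T\pi(G)>\dim_T G=d$; applying part~(1) and the coordinate-graph structure extracted there to $\pi(G)$, one finds a definable open $U\subseteq\R^{d+1}$ and a continuous definable $\psi\colon U\to\R^{k-d-1}$ whose graph sits inside $\pi(G)$. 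Pulling back by $\pi$ and using the Corollary to Theorem~D to select a compact witness with interior in the fibre direction yields a $\DSig$ subset of $G$ of topological dimension at least $d+1$, the desired contradiction.

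For part (3), the upper bound $\dim_T f^{-1}(B)\leq\dim_T B+d$ is the classical Hurewicz fibre-dimension inequality applied to $f|_{f^{-1}(B)}\colon f^{-1}(B)\to B$, noting that $f^{-1}(B)$ is $\DSig$ because $B$ is and $f$ is continuous definable. For the reverse inequality, I would construct a $\DSig$ subset of $f^{-1}(B)$ of dimension at least $\dim_T B+d$ by glueing a $\DSig$ subset of $B$ realising $\dim_T B$ to a definable $d$-dimensional $\DSig$ selection in each fibre, then invoke part~(2) applied to the back-projection to $B$ to ensure that the glued dimensions add. The main obstacle throughout is the inductive reduction in (1) and (2): producing, in the absence of an o-minimal cell decomposition, a definable coordinate-graph structure on closed nowhere-dense $\DSig$ subsets of $\R^m$; this must be extracted directly from iterated applications of Theorem~D and its Corollary.
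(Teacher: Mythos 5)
Your handling of the boundary claim in (1) is correct and matches the paper (SBCT forces $\bd(A)$ to have empty interior, since otherwise $A$ would be dense and co-dense in an open set), and your observation that the upper bound in (3) follows from the Hurewicz fibre inequality applied to a compact exhaustion of the $\sigma$-compact set $f^{-1}(B)$ (where the restriction of $f$ is a closed map) together with the countable sum theorem is a legitimate shortcut around the paper's additivity theorem. Everywhere else, however, the load-bearing step is the item you yourself flag as ``the main obstacle'': a definable cover of a closed nowhere dense $\DSig$ set by graphs of continuous definable functions over \emph{open} subsets of coordinate hyperplanes. This claim is not merely unproved; it is false in the present generality. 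Take $A = C\times C$ for a compact nowhere dense $C\subseteq\R$ of Hausdorff dimension zero: every subset of $C\times C$ has nowhere dense image under both coordinate projections, so $C\times C$ contains no graph over any nonempty open subset of a line, and no such cover exists. The paper avoids any global decomposition by proving instead that \emph{naive} dimension (the largest $k$ for which some coordinate projection to $\R^k$ has image with interior) coincides with topological dimension on $\DSig$ sets (Proposition~\ref{Bound0}). One inequality is N\"obeling's projection theorem for $F_\sigma$ sets (Fact~\ref{topdimmono}(4)); the other rests on a definable selection theorem, $\DSig$-choice (Proposition~\ref{select}): if a coordinate projection of a $\DSig$ set has interior, there is a continuous definable section over some nonempty open set, obtained by taking lexicographic minima of compact fibres and showing the result is lower semicontinuous, hence Baire class one, hence generically continuous by SBCT. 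That single local section is all that parts (1) and (2) need: for (2) a section of $\gr(f|_A)$ over an open subset of a suitable coordinate projection of $f(A)$ embeds an open set of $\R^{\dim_T f(A)}$ into $A$ (Proposition~\ref{prop:raisedim}); for (1) one then only needs that $\pi(A)$ is dense in the interior of $\pi(\cl(A))$, and SBCT upgrades density to interior.

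For the lower bound in (3), ``glueing a $\DSig$ subset of $B$ to a $d$-dimensional selection in each fibre and invoking part (2)'' is not a proof: part (2) only says projections do not raise dimension and yields no additivity, and you have not explained how to produce a definable, uniformly $d$-dimensional family of fibre selections. The actual content is the Kuratowski--Ulam/Fubini argument of Lemma~\ref{lem:easypart}: if the set of base points whose fibre has naive dimension at least $e$ itself has naive dimension at least $d-e$, then the total set has naive dimension at least $d$; this depends on Fact~\ref{fact:dimdsig} (the set of base points whose fibre projects onto a set with interior is $\DSig$) and Fact~\ref{KU}, and is then translated into topological dimension via Proposition~\ref{Bound0}. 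In short, your proposal has the right shape in places but cannot be completed as written: you must replace the false coordinate-graph decomposition with the selection theorem and the identification of naive with topological dimension, which are the paper's genuinely new ingredients.
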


By (2) above there is no continuous definable surjection $\R^m \to \R^{n}$ when $m < n$.
By (3) above we have $\dim_T(A \times B) = \dim_T(A) + \dim_T(B)$ for $D_\Sigma$ sets $A,B$.
This does not hold for all compact subsets of Euclidean space.
Pontryagin \cite{pont} constructed compact subsets $A,B \subseteq \R^4$ such that $\dim_T(A) = \dim_T(B) = 2$ and $\dim_T(A \times B) = 3$.
Theorem E is new even for $\operatorname{NIP}$ expansions of $(\R,<,+)$.\newline

Theorem E fails when the assumption that $A$ and $B$ are $\DSig$ is dropped. Consider the expansion $(\R,<,+,\Q)$. Using \cite[5.8]{DMS1} it is not hard to check that this structure does not define a dense $\omega$-orderable set. However, the definable set $[ (\R \setminus \mathbb{Q}) \times \{ 1 \} ] \cup [ \mathbb{Q} \times \{0\}]$ has topological dimension zero, while its projection onto the first coordinate has topological dimension one.\newline

We now explain how to derive Theorem A from Theorem D and E. Let $\dim_H(A)$ be the Hausdorff dimension of $A$. Let $A\subseteq \R^n$ be closed and definable such that $\dim_T (A) < \dim_H(A)$. After replacing $A$ with $A^k$ for sufficiently large $k$, we may suppose $\dim_T (A) +1< \dim_H(A)$. Applying the Marstrand projection theorem we obtain an orthogonal projection $\rho :\R^n \to \R^{\dim_T(A)+1}$ such that $\rho(A)$ has positive $(\dim_T(A)+1)$-dimensional Lebesgue measure. This projection is definable if $\Cal R$ expands $\Cal M_{\R}$. Thus $\rho(A)$ is $\DSig$. By Theorem D, $\rho(A)$ has interior. By Theorem E, $\dim_T(A) \geq \dim_T \rho(A) = \dim_T(A)+1$, contradiction.\newline

A few words of attribution are in order.  The underlying idea behind the above argument goes back to Edgar and Miller \cite{EM01}. Building on their work, Fornasiero in the unpublished manuscript \cite{F-Hausdorff} devises the above strategy to prove Theorem A for expansions of the real field. The manuscript \cite{F-Hausdorff} also contains a different proof of Theorem D and Theorem E for expansions of the real field.\newline

Here is an overview of the paper. In Section 2 we explain the necessary prerequisites from descriptive set theory and metric geometry. In Section 3 we recall some results from \cite{HW-Monadic} about expansions that do not define dense $\omega$-orderable sets, and prove Theorem C. The proof of Theorem D is in Section 4. In Section 5 we prove statement (2) of Theorem E and deduce Theorem A. Section 6 contains a proof of the other statements of Theorem E. Finally we establish Theorem B in Section 7 and explain why Theorem A fails for packing dimension.

\subsection*{Acknowledgements} The authors thank Chris Miller for envisioning this research area and for supporting our work in it. They also thank Tapio Rajala for answering some questions, and William Balderrama for doing a reading course with the second author.
They thank Samantha Xu for useful feedback and thank logic groups in Bogota, Chicago, Paris, New York, and Urbana for listening to talks on this subject and giving useful feedback.

\section{Prerequisites}

\subsection*{Notation} Let $A \subseteq \R^n$. We denote by $\cl (A)$ the closure of $A$, by $\Int(A)$ the interior of $A$, and by $\bd (A)$ the boundary $\cl(A) \setminus \Int(A)$ of $A$. Whenever $A \subseteq \R^{m+n}$ and $x\in \R^m$, then $A_x$ denotes the set $\{ y \in \R^n \ : \ (x,y) \in A\}$. We always use $i,k,l,m,n$ for natural numbers and $r,s,t,\lambda,\epsilon,\delta$ for real numbers.
Given a function $f : A \to B$ we let $\gr(f) \subseteq A \times B$ be the graph of $f$.
Given $x = (x_1,\ldots,x_n) \in \R^n$ we let
$$ \|x\| = \max \{ |x_1|, \ldots, |x_n| \}$$
be the $l_{\infty}$ norm of $x$.
We use the $l_\infty$ norm as $\|x\|$ is, unlike the $l_2$ norm, an $(\R,<,+)$-definable function of $x$.

\subsection*{Descriptive set theory} For basic notions and results from descriptive set theory, we refer the reader to Kechris \cite{kechris}. The fact that an $F_\sigma$ set has interior if and only if it is not meager, yields the following strengthening of the Kuratowski-Ulam theorem for $F_\sigma$ sets which we use repeatedly (compare \cite[1.5(3)]{MS99}).

\begin{fact}\label{KU}
Let $A \subseteq \mathbb{R}^{m + n}$ be $F_\sigma$. Then $A$ has interior if and only if the set
$\{ x \in \R^m : A_x \text{ has interior}\}$ is non-meager.
\end{fact}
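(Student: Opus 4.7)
The plan is to reduce the statement to the classical Kuratowski--Ulam theorem by exploiting a single structural feature of $F_\sigma$ sets in $\R^k$: such a set has interior if and only if it is non-meager. The easy direction ``has interior $\Rightarrow$ the projection set is non-meager'' requires no fancy machinery. First I would pick a nonempty open box $U\times V\subseteq A$ witnessing that $A$ has interior; then for every $x\in U$ the section $A_x$ contains $V$, so $U\subseteq \{x\in\R^m: A_x\text{ has interior}\}$, which is therefore even open and nonempty, hence non-meager.

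For the converse, the key observation I would isolate first is the lemma: if $B\subseteq \R^k$ is $F_\sigma$, then $B$ has interior iff $B$ is non-meager. Indeed, writing $B=\bigcup_j F_j$ with each $F_j$ closed, if $B$ has empty interior then each $F_j$ has empty interior and is therefore nowhere dense, so $B$ is meager; conversely the Baire category theorem forces any nonempty open subset of $\R^k$ to be non-meager, so if $B$ has interior then $B$ is non-meager.

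Next I would note that each section $A_x$ is itself $F_\sigma$, since writing $A=\bigcup_j F_j$ with $F_j$ closed in $\R^{m+n}$ gives $A_x=\bigcup_j (F_j)_x$ with each $(F_j)_x$ closed in $\R^n$. Combining this with the lemma of the previous paragraph, the hypothesis ``$\{x: A_x\text{ has interior}\}$ is non-meager'' is equivalent to ``$\{x: A_x\text{ is non-meager in }\R^n\}$ is non-meager in $\R^m$''.

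Finally I would invoke the classical Kuratowski--Ulam theorem (applicable since $A$ has the Baire property, being Borel): $A$ is meager in $\R^{m+n}$ if and only if $\{x\in\R^m : A_x\text{ is meager in }\R^n\}$ is comeager. Taking contrapositives, $A$ is non-meager iff $\{x: A_x\text{ is non-meager}\}$ is non-meager. So under our hypothesis $A$ is non-meager, and since $A$ is $F_\sigma$ the lemma upgrades this to $A$ having interior. I do not foresee a real obstacle here; the only point requiring care is bookkeeping the two applications of the ``$F_\sigma$ + no interior $\Rightarrow$ meager'' principle, once for $A$ itself and once uniformly for its sections.
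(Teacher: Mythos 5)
Your proof is correct and follows exactly the route the paper indicates (the paper gives no detailed proof, only the remark that the equivalence ``$F_\sigma$ with interior $\Leftrightarrow$ non-meager'' combined with the classical Kuratowski--Ulam theorem yields the fact). Your two applications of that equivalence --- once to $A$ and once to the sections $A_x$ --- together with Kuratowski--Ulam for sets with the Baire property are precisely the intended argument.
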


\subsection*{Metric Dimensions}
Let $X\subseteq \R^n$. We denote the \textbf{Hausdorff dimension} of $X$ by $\dim_H X$ and the \textbf{packing dimension} of $X$ by $\dim_P(X)$. We refer the reader to Falconer \cite{Falconer} for the precise definitions of these dimensions. For the reader's convenience we recall the properties of Hausdorff and packing dimension used in this paper.

\begin{fact} \label{Hsadditive}
Let $A,B \subseteq \R^n$. Then
\begin{enumerate}
\item If $A$ has positive $n$-dimensional Lebesgue measure then $\dim_H A = n$.
\item $\dim_H A \leq \dim_H B$ if $A\subseteq B$.
\item $\dim_H T(A) \leq \dim_H A$ for every linear map $T : \R^n \to \R^m$.
\item $ \dim_H A + \dim_HB \leq \dim_H(A \times B).$
\item $ \dim_H A = \sup_{i \in \N} \dim_H A_i$ whenever $A=\bigcup_{i\in \N} A_i$.
\end{enumerate}
Items (1) - (3) and (5) of Fact \ref{Hsadditive} hold with $\dim_P$ in place of $\dim_H$. Moreover, $\dim_P(A \times B) \leq dim_P A + \dim_P B$ and $\dim_H A \leq \dim_P A$ for every $A, B \subseteq \R^n.$
\end{fact}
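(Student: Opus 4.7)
The plan is to verify each clause by invoking standard results from geometric measure theory as developed in Falconer. Since these are classical properties of Hausdorff and packing dimension, the proof largely reduces to citing the correct references and briefly explaining how they apply; the only nontrivial point is the product inequality in item (4).

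For item (1), the $n$-dimensional Hausdorff measure $\mathcal{H}^n$ on $\R^n$ agrees with Lebesgue measure up to a multiplicative constant, so positive Lebesgue measure forces $\mathcal{H}^n(A)>0$, yielding $\dim_H A \geq n$; the reverse inequality is automatic for $A \subseteq \R^n$. Item (2) is immediate from the definition, since any admissible $\delta$-cover of $B$ is also a $\delta$-cover of $A$. Item (3) reduces to the fact that Lipschitz maps cannot increase Hausdorff dimension, applied to a linear $T$ (which is Lipschitz with respect to $\|\cdot\|$). Item (5) is the countable stability of $\dim_H$, a consequence of the $\sigma$-subadditivity of $\mathcal{H}^s$.

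Item (4) is the real content of the fact. I would deduce it from Frostman's lemma: given $s < \dim_H A$ and $t < \dim_H B$, produce Borel probability measures $\mu$ on $A$ and $\nu$ on $B$ assigning mass at most $C r^s$ and $C r^t$ respectively to any ball of radius $r$. Working with the $l_\infty$ norm makes balls factor as products, so $\mu \otimes \nu$ on $A \times B$ assigns mass at most $C^2 r^{s+t}$ to any ball of radius $r$. The mass distribution principle then forces $\mathcal{H}^{s+t}(A \times B) > 0$ and hence $\dim_H(A \times B) \geq s+t$; taking suprema over $s,t$ yields (4).

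For the packing dimension assertions, items (1), (2), (3), and (5) have analogous proofs using packing premeasure and packings by balls in place of Hausdorff measure and covers, as developed in Falconer. The reverse product inequality $\dim_P(A \times B) \leq \dim_P A + \dim_P B$ and the comparison $\dim_H A \leq \dim_P A$ are also standard results found there. The main obstacle is item (4); the rest is bookkeeping.
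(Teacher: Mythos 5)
The paper offers no proof of this Fact at all: it is stated as a collection of standard properties with a citation to Falconer, so there is no argument of the authors' to compare against. Your sketches of (1), (2), (3), (5) and of the packing-dimension assertions are the standard ones and are correct. For item (4), your Frostman-plus-mass-distribution argument is also the standard route (it is essentially Falconer's proof of the product formula, and your observation that $l_\infty$-balls factor as products is the right way to handle the product measure). The one point to flag is that Frostman's lemma produces the measure $\mu$ with $\mu(B(x,r)) \leq C r^s$ only for Borel (or analytic) sets of positive $\mathcal{H}^s$-measure, whereas the Fact is stated for arbitrary $A, B \subseteq \R^n$; for completely general sets one instead runs the direct weighted-covering argument of Marstrand/Besicovitch--Moran, which shows $\mathcal{H}^{s+t}(A \times B) \gtrsim \mathcal{H}^s(A)\,\mathcal{H}^t(B)$ without any regularity hypothesis. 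Since every application in the paper is to closed sets and their cartesian powers, your version suffices for everything that follows, but you should either add the Borel hypothesis to your statement of Frostman or note the alternative covering argument for the general case.
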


\noindent The deepest fact that we use about Hausdorff dimension is a special case of the Marstrand projection theorem (see \cite[Theorem 6.2]{Falconer}).

\begin{fact}[Marstrand projection theorem]\label{marstrand}
Let $B \subseteq \mathbb{R}^n$ be Borel such that $\dim_H B > m$.
Then there is an orthogonal projection $\rho : \mathbb{R}^n \to \mathbb{R}^m$ such that $\rho(B)$ has positive $m$-dimensional Lebesgue measure.
\end{fact}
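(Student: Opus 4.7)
The plan is to follow the classical Fourier-analytic proof of Marstrand's projection theorem due to Kaufman (as presented for example in Falconer or Mattila). The argument proceeds via Frostman's lemma together with the Fourier characterization of energies. Since $\dim_H B > m$, I would first apply Frostman's lemma to select a compactly supported Borel probability measure $\mu$ on $B$ and an exponent $s$ with $m < s < \dim_H B$ such that the $s$-energy
\[
I_s(\mu) := \iint |x-y|^{-s}\,d\mu(x)\,d\mu(y)
\]
is finite. Because $\mu$ has bounded support and $s > m$, the elementary bound $|x-y|^{-m} \leq (\operatorname{diam}\operatorname{supp}\mu)^{s-m}|x-y|^{-s}$ then gives $I_m(\mu) < \infty$ as well.

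For each $m$-plane $V \in G(n,m)$, let $\rho_V \colon \R^n \to V$ be the orthogonal projection and set $\mu_V := (\rho_V)_*\mu$. A direct change of variables yields $\hat\mu_V(u) = \hat\mu(u)$ for $u \in V$, so $\|\hat\mu_V\|_{L^2(V)}^2 = \int_V |\hat\mu(u)|^2\,d\mathcal{L}^m(u)$. I would then combine the Fourier identity
\[
I_t(\mu) = c_{n,t}\int_{\R^n} |\hat\mu(\eta)|^2\,|\eta|^{t-n}\,d\eta
\]
at $t = m$ with the polar/coarea identity
\[
\int_{G(n,m)}\!\int_V f(u)\,d\mathcal{L}^m(u)\,d\gamma(V) = c_{n,m}\int_{\R^n} f(\eta)\,|\eta|^{m-n}\,d\eta,
\]
where $\gamma$ is the Haar probability measure on $G(n,m)$, applied to $f = |\hat\mu|^2$. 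The result is
\[
\int_{G(n,m)} \|\hat\mu_V\|_{L^2(V)}^2\,d\gamma(V) = c'_{n,m}\,I_m(\mu) < \infty.
\]

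Consequently $\hat\mu_V \in L^2(V)$ for $\gamma$-almost every $V$, and by Plancherel's theorem $\mu_V$ is then absolutely continuous with respect to $\mathcal{L}^m$ with an $L^2$ density. Since $\mu_V$ is a probability measure supported on $\rho_V(B)$, this forces $\mathcal{L}^m(\rho_V(B)) > 0$ for $\gamma$-almost every $V$, which in particular yields the single orthogonal projection required by the statement. I expect the main technical obstacle to be the Grassmannian averaging step: checking the polar-slicing identity on $\R^n$ and verifying that $\hat\mu_V$ really is the restriction of $\hat\mu$ to $V$. Once these two computational facts are in place, the conclusion is a routine application of Plancherel and Frostman.
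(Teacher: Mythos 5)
The paper does not prove this statement: it is quoted as a known fact with a citation to Falconer, so there is no internal argument to compare yours against. Your outline is the standard Kaufman--Mattila potential-theoretic/Fourier proof and it is correct in all essentials: Frostman's lemma plus the elementary kernel comparison gives $I_m(\mu)<\infty$ (note $m<n$ is automatic since $\dim_H B>m$ for $B\subseteq\R^n$), the Grassmannian polar identity converts this into the finiteness of $\int_{G(n,m)}\|\hat\mu_V\|_{L^2(V)}^2\,d\gamma(V)$, and Plancherel then yields absolute continuity of $\mu_V$ with $L^2$ density for $\gamma$-almost every $V$, hence $\mathcal{L}^m(\rho_V(B))>0$ for at least one $V$, which is all the statement asks for.
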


\noindent For our purposes, the main difference between packing and Hausdorff dimension is the failure of Fact~\ref{marstrand} for packing dimension. By J{\"a}rvenp{\"a}{\"a} \cite{Jarvenpaa} there are subsets of $\R^n$ with packing dimension greater than $m$ such that all projections onto all $m$-dimensional subspaces are Lebesgue null.

\subsection*{Topological dimension} We now define the topological dimension of $X$. We refer to Engelking \cite{Engelking} for proofs. There are several different notions of dimension for topological spaces all of which fortunately agree on subsets of Euclidean space.

\begin{defn} \label{defn:topdim}
The \textbf{topological dimension} of $X$ is defined inductively: $\dim_T(\emptyset) = -\infty$ and if $X \neq \emptyset$ then $\dim_T(X)$ is the minimal $k$ such that for every $x \in X$ and neighbourhood $U$ of $x$ there is a neighbourhood $V$ of $x$ such that the closure of $V$ is contained in $U$ and $\dim_T(\bd(V) \cap X) < k$. We denote the topological dimension of $X$ by $\dim_T(X)$.
\end{defn}

\noindent This dimension is also called the small inductive dimension. We will need the following facts.

\begin{fact}\label{topdimmono}
Let $A,B \subseteq \R^n$. Then
\begin{enumerate}
\item $\dim_T A =n$ if and only if $A$ has nonempty interior in $\R^n$.
\item $\dim_T A \leq \dim_T B$ if $A\subseteq B$.
\item $ \dim_T A = \sup_{i \in \N} \dim_T (A_i)$ whenever $A=\bigcup_{i\in \N} A_i$ and each $A_i$ is closed.
\item if $A$ is $\FSig$ and $\dim_T A \geq m$ then there is a coordinate projection $\pi : \R^n \to \R^m$ such that $\dim_T\pi(A) =m$.
\item $\dim_T A \leq \dim_H A$.
\item $\dim_T A = 0$ if and only if $A$ is totally disconnected.
\end{enumerate}
\end{fact}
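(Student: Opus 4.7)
The plan is to derive all six items of Fact~\ref{topdimmono} from the classical small-inductive-dimension theory in Engelking~\cite{Engelking}, citing standard results where appropriate.

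Parts (2) and (6) fall out of Definition~\ref{defn:topdim} essentially by unwinding. For (2), induct on the dimension bound $k$: any open $V$ witnessing $\dim_T B \leq k$ at $x$ also witnesses $\dim_T A \leq k$ at $x$ because $\bd(V)\cap A \subseteq \bd(V)\cap B$ and the inductive hypothesis applies. For (6), the zero-dimensional case of the definition asserts a basis of relatively clopen neighborhoods at each point, which for subsets of $\R^n$ is equivalent to total disconnectedness. Parts (1), (3), and (5) I would take from Engelking: (1) combines Brouwer's invariance of dimension $\dim_T \R^n = n$ with monotonicity in both directions (a set with interior contains an open ball, and a set without interior is separated at each point by arbitrarily small balls whose $(n-1)$-dimensional boundary spheres meet $A$ in lower-dimensional sets); (3) is the Menger--Urysohn countable closed sum theorem; (5) is Szpilrajn's theorem, proved by building, around each point, a neighborhood basis whose boundaries have $(\dim_H A + \delta)$-Hausdorff measure zero and inducting on $\lfloor \dim_H A\rfloor$.

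The substantive item is (4). The plan is first to reduce to the compact case: write $A = \bigcup_i K_i$ with $K_i$ closed, intersect each $K_i$ with arbitrarily large closed cubes, and use (3) to replace $A$ by some compact $K$ with $\dim_T K \geq m$. If a coordinate projection $\pi_I : \R^n \to \R^m$ satisfies $\dim_T \pi_I(K) = m$, then $\pi_I(K) \subseteq \pi_I(A) \subseteq \R^m$, so (1) and (2) force $\dim_T \pi_I(A) = m$. The remaining fact to invoke is the classical statement: \emph{for compact $K \subseteq \R^n$ with $\dim_T K \geq m$, some coordinate projection $\pi_I : \R^n \to \R^m$ has $\pi_I(K)$ of nonempty interior}. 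This is Hurewicz--Wallman's essential-map characterization of topological dimension combined with the observation that essentiality can be witnessed by a coordinate projection.

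I expect the main obstacle to be that last step: showing that among \emph{finitely many} coordinate projections at least one is essential, rather than merely that some continuous or generic linear map has image with interior. My backup plan is an induction on $n - m$: the case $n = m$ is (1); for $n > m$, let $\sigma : \R^n \to \R^{n-1}$ drop the last coordinate. Either $\dim_T \sigma(K) \geq m$ and one applies the inductive hypothesis to $\sigma(K)$ (composing with a coordinate projection $\R^{n-1}\to\R^m$ yields the desired $\pi_I$), or $\dim_T \sigma(K) < m$, in which case a Hurewicz fiber-dimension inequality forces some slice $K \cap \sigma^{-1}(y)$ to contribute the missing dimension and one iterates. Verifying that this slicing argument closes cleanly is the delicate point; absent that, a direct citation to the appropriate theorem in Engelking~\cite{Engelking} suffices.
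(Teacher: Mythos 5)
Your route coincides with the paper's: the paper offers no proof of this Fact beyond the blanket reference to Engelking \cite{Engelking}, and the only item it singles out is (4), whose compact case it attributes to N\"obeling \cite[1.10.23]{Engelking}. Your reduction of the $\FSig$ case of (4) to the compact case --- writing $A$ as a countable union of compacta, using (3) and the integrality of $\dim_T$ to find one compact piece of dimension at least $m$, applying N\"obeling's theorem to it, and pushing the conclusion back up to $A$ via (1) and (2) --- is exactly the intended argument, and your citations for (1), (2), (3), (5) are the standard ones.

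The one step that would fail is (6). The implication ``$\dim_T A = 0$ implies $A$ totally disconnected'' does unwind from the definition, but the converse is not ``essentially by unwinding'' and is in fact false for arbitrary subsets of $\R^n$ when $n \geq 2$: the Knaster--Kuratowski fan with its apex deleted is a totally disconnected subset of $\R^2$ of positive topological dimension (and Erd\H{o}s space embeds in $\R^3$, giving another such example). The equivalence is correct for subsets of $\R$ and for locally compact --- in particular compact --- subsets of $\R^n$, which are the only instances the paper ever uses; so the defect lies as much in the literal statement of item (6) as in your proof of it, but your claim that it follows by unwinding Definition \ref{defn:topdim} should be replaced by an appeal to the compact (or $n=1$) case, where a totally disconnected set does admit a basis of relatively clopen neighbourhoods.
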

\noindent The compact case of $(4)$ is due to N\"obeling, see \cite[1.10.23]{Engelking}.
We view it as an analogue of the Marstrand projection theorem for topological dimension.
Note that the $\FSig$ case of $(5)$ follows by applying (in order) Fact~\ref{topdimmono}$(4)$, Fact~\ref{topdimmono}$(1)$, Fact~\ref{Hsadditive}$(1)$, and Fact~\ref{Hsadditive}$(3)$.

\subsection*{Naive dimension} We have to introduce one last notion of dimension that is frequently used in model theory. Let $X \subseteq \R^n$. The \textbf{naive dimension} of $X$, written $\dim X$, is $-\infty$ if $X$ is empty, and is otherwise the maximal $k$ for which there is a coordinate projection $\pi : \R^n \to \R^k$ such that $\pi(X)$ has interior.
It is easy to see that the following facts are true:

\begin{fact}\label{fact:naivedim} Let $A,B\subseteq \R^n$. Then
\begin{enumerate}
\item $\dim A=n$ if and only if $A$ has interior.
\item $\dim A \leq \dim B$ if $A\subseteq B$.
\item $\dim A \times B = \dim A + \dim B$.
\item $\dim A \leq \dim_H A$.
\item $\dim_T A \leq \dim A$ if $A$ is $\FSig$.
\end{enumerate}
\end{fact}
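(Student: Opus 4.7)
The plan is to verify each item by routine manipulation of coordinate projections, using only earlier-stated facts. The key observation throughout is that a coordinate projection $\R^n\to\R^k$ corresponds to choosing a size-$k$ subset of coordinates and that such projections respect inclusions, products, and monotonicity.

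For item (1), the only coordinate projection $\R^n\to\R^n$ is the identity, so $\dim A=n$ means $A=\mathrm{id}(A)$ has interior. Item (2) is immediate from $\pi(A)\subseteq\pi(B)$ together with the fact that any superset of a set with nonempty interior has nonempty interior. For item (3), write $A\subseteq\R^m$ and $B\subseteq\R^n$ (the statement as written restricts to $\R^n$, but the argument works identically, and I would note the natural generalization). Any coordinate projection $\pi:\R^{m+n}\to\R^k$ factors as $\pi=(\pi_1,\pi_2)$ where $\pi_1:\R^m\to\R^{k_1}$ and $\pi_2:\R^n\to\R^{k_2}$ are coordinate projections with $k_1+k_2=k$; moreover $\pi(A\times B)=\pi_1(A)\times\pi_2(B)$. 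Since a product has interior in $\R^{k_1+k_2}$ if and only if each factor has interior in the respective $\R^{k_i}$, taking the supremum over $\pi$ gives both inequalities at once.

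For item (4), suppose $\dim A=k$ and pick a coordinate projection $\pi:\R^n\to\R^k$ with $\pi(A)$ having interior. Then $\pi(A)$ has positive $k$-dimensional Lebesgue measure, so by Fact \ref{Hsadditive}(1), $\dim_H \pi(A)=k$; by Fact \ref{Hsadditive}(3) applied to the linear map $\pi$, we get $\dim_H A\geq\dim_H\pi(A)=k$. Item (5) for $F_\sigma$ sets follows from Fact \ref{topdimmono}(4): if $A$ is $F_\sigma$ and $\dim_T A\geq m$, there is a coordinate projection $\pi:\R^n\to\R^m$ with $\dim_T\pi(A)=m$, hence by Fact \ref{topdimmono}(1) the image $\pi(A)$ has interior in $\R^m$, which means $\dim A\geq m$. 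Taking $m=\dim_T A$ yields $\dim A\geq\dim_T A$.

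There is no genuine obstacle here: everything is a direct consequence of the definitions and the earlier facts. The only mildly delicate point is item (3), where one must be careful to match up the split of a coordinate projection on the product with projections on the factors, and to invoke that a product set has interior precisely when both factors do. Items (4) and (5) are essentially one-line deductions once the quoted facts about Hausdorff and topological dimension are in hand.
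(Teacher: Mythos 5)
Your proof is correct, and it takes the route the paper intends: the paper states these items without proof as routine consequences of the definitions, and its only explicit hint --- that item (5) follows from Fact~\ref{topdimmono}(4) together with Fact~\ref{topdimmono}(1) --- is exactly the deduction you give. The remaining items are verified just as you do, by tracking how coordinate projections interact with inclusions and products and by invoking Fact~\ref{Hsadditive}(1) and (3) for item (4).
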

\noindent Note that Fact~\ref{fact:naivedim}$(5)$ follows from Fact~\ref{topdimmono}$(4)$ and Fact~\ref{topdimmono}$(1)$. While naive dimension is not studied in metric geometry, it has various desirable definability-theoretic properties. The most important is the following.

\begin{fact} Let $A\subseteq \R^{m + n}$ and $\ast$ in $\{\leq,\geq,<,>=\}$. Then
\[
\{ x \in \R^m \ : \ \dim A_x \ast k \}
\]
is definable in $(\R,<,A)$ for any $k$.
\end{fact}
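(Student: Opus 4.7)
The plan is to observe that naive dimension is essentially a quantifier-over-reals concept built from "$X$ has nonempty interior in $\R^k$", which is already first-order expressible using only $<$ and membership in $X$. From this, the claim reduces to a boolean combination of such statements applied to coordinate projections of fibers.

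First, I would reduce to the single condition $\dim A_x \geq k$, since
\[
\dim A_x < k \ \Longleftrightarrow \ \neg(\dim A_x \geq k), \qquad \dim A_x = k \ \Longleftrightarrow \ (\dim A_x \geq k) \wedge \neg(\dim A_x \geq k+1),
\]
and analogously for the remaining symbols in $\{\leq,\geq,<,>,=\}$. Unwinding the definition of naive dimension, $\dim A_x \geq k$ iff there exists a coordinate projection $\pi : \R^n \to \R^k$ such that $\pi(A_x)$ has nonempty interior in $\R^k$. There are only $\binom{n}{k}$ such projections, one for each choice of coordinate set $I = \{i_1 < \cdots < i_k\} \subseteq \{1,\ldots,n\}$, so it suffices to show that for a fixed such $I$ (with complement $J = \{j_1 < \cdots < j_{n-k}\}$) the set
\[
S_I \;=\; \{\, x \in \R^m \ : \ \pi_I(A_x) \text{ has nonempty interior in } \R^k \,\}
\]
is definable in $(\R,<,A)$; then $\{x : \dim A_x \geq k\}$ is the finite union $\bigcup_I S_I$.

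The key observation is that "$\pi_I(A_x)$ has nonempty interior" can be written as: there exist $a_1 < b_1,\ldots, a_k < b_k$ in $\R$ such that for every $(y_1,\ldots,y_k)$ with $a_\ell < y_\ell < b_\ell$ for each $\ell$, there exist $z_1,\ldots,z_{n-k} \in \R$ such that the tuple $w \in \R^n$ with $w_{i_\ell} = y_\ell$ and $w_{j_\ell} = z_\ell$ satisfies $(x,w) \in A$. Writing $\varphi_I(x,a,b)$ for the corresponding formula
\[
\bigwedge_{\ell=1}^{k} (a_\ell < b_\ell) \ \wedge \ \forall y_1\cdots\forall y_k \Bigl[\, \bigwedge_{\ell}(a_\ell<y_\ell<b_\ell) \ \rightarrow\ \exists z_1\cdots\exists z_{n-k}\, A(x,w)\,\Bigr],
\]
one sees that $S_I = \{x : \exists a,b\ \varphi_I(x,a,b)\}$, and this formula uses only the symbols $<$ and $A$ together with real quantification. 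Thus $S_I$ is definable in $(\R,<,A)$, which gives definability of $\{x : \dim A_x \geq k\}$, and hence of the set in the statement for each $\ast \in \{\leq,\geq,<,>,=\}$.

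There is no real obstacle here: the content is entirely that the defining condition for naive dimension is a first-order condition in $<$ and $A$, because "nonempty interior" translates to a box-containment statement, and coordinate projections introduce only finitely many cases. The only thing to be careful about is ordering the quantifiers correctly (existential over the box, universal over interior points, existential over the missing coordinates) so that the formula genuinely asserts that the whole box lies in the projection, rather than something weaker.
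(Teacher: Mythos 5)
Your proof is correct and is exactly the intended argument: the paper states this fact without proof as an easy observation, and the standard justification is precisely your reduction to finitely many coordinate projections plus the first-order expression of ``contains an open box'' using only $<$ and the predicate $A$. The only (trivial) point left implicit is the degenerate case $k=0$, where the condition just becomes $\exists z\, (x,z)\in A$.
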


\subsection*{$\DSig$ sets} We collect a few results about $\DSig$ sets in expansions of $(\R,<,+)$. Throughout this subsection $\Cal R$ is an expansion of $(\R,<,+)$.



\begin{fact}\label{basicDsigma}

\begin{enumerate}
\item The image of a $\DSig$ set under a continuous definable map is $\DSig$.
\item Finite intersections, and finite unions of $\DSig$ sets are $\DSig$.
\item If $A \subseteq \R^{m + n}$ is $\DSig$ then $A_x$ is $\DSig$ for every $x \in \R^m$.
\item Every definable set that is a boolean combination of closed sets is $\DSig$.
\item If $\{ A_t : t > 0 \}$ is a definable family of $D_\Sigma$ sets such that $A_s \subseteq A_t$ whenever $t < s$, then the union of the $A_t$ is $D_\Sigma$.
\end{enumerate}
\end{fact}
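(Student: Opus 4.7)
The plan is to prove each of the five items by direct manipulation of witnessing families; throughout, whenever $A$ and $B$ are $\DSig$, let $\{X_{r,s}\}$ and $\{Y_{r,s}\}$ witness this.

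For (1), given continuous definable $f:\R^m\to\R^k$, the family $\{f(X_{r,s})\}_{r,s>0}$ is a definable family of compact sets (continuous image of compact is compact), the monotonicity of $\{X_{r,s}\}$ is preserved under $f$, and $f(A)=\bigcup_{r,s}f(X_{r,s})$, so this witnesses that $f(A)$ is $\DSig$. For (2), $\{X_{r,s}\cup Y_{r,s}\}$ and $\{X_{r,s}\cap Y_{r,s}\}$ witness respectively that $A\cup B$ and $A\cap B$ are $\DSig$, since finite unions and intersections of compact sets are compact and monotonicity is preserved. For (3), given $A\subseteq\R^{m+n}$ with witness $\{X_{r,s}\}$ and $x\in\R^m$, the fibers $\{(X_{r,s})_x\}$ are a definable family of compact sets witnessing that $A_x$ is $\DSig$.

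For (4), by (2) it suffices to show that closed definable and open definable sets are $\DSig$, since any boolean combination of closed sets is a finite union of locally closed sets (intersections of a closed set with an open set). Any closed definable $C\subseteq\R^n$ is $\DSig$ via $\{C\cap\{\|x\|\leq r\}\}_{r>0}$ (the second parameter is redundant). For an open definable $U\subseteq\R^n$, the $l_\infty$-distance $d(x,S)=\inf_{y\in S}\|x-y\|$ is definable in $(\R,<,+)$ (since $\|x-y\|\geq s$ is definable and one existentially quantifies appropriately), and the family
\[
X_{r,s}=\{x\in U:\|x\|\leq r\text{ and }d(x,\R^n\setminus U)\geq s\}
\]
consists of compact sets, is monotone in $(r,s)$, and has union $U$.

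For (5), the main obstacle is to produce, from the individually witnessed $\DSig$ fibers $A_t$, a single two-parameter definable family of compact sets whose union is $A=\bigcup_{t>0}A_t$. The plan is first to construct a uniformly definable three-parameter family $\{K^{(t)}_{r,s}\}_{t,r,s>0}$ of compact sets with $\bigcup_{r,s}K^{(t)}_{r,s}=A_t$ for each $t$: the witness constructions from (4) in the closed and open cases are manifestly uniform in a parameter, and exploiting the definability of the family $\{A_t\}$ yields the corresponding witnesses uniformly in $t$. Then, using the monotonicity $A_s\subseteq A_t$ for $t<s$, one collapses the three parameters to two by amalgamating the $t$-parameter with one of $r,s$---for instance, after enlarging the $K^{(t)}_{r,s}$ if needed to make $t\mapsto K^{(t)}_{r,s}$ monotone, set $W_{r,s}=K^{(s)}_{r,s}$ and verify compactness, monotonicity in $(r,s)$, and the identity $\bigcup_{r,s}W_{r,s}=A$. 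The heart of the argument is the uniform-definability step; once that is in place, the reparametrization is routine.
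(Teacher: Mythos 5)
Items (1)--(4) of your proposal are correct. The paper handles these by a one-line remark for (1), citations to Dolich--Miller--Steinhorn for (2) and (4), and derives (3) from (2), so your explicit witnessing families (the norm-truncation for closed sets, the distance-to-complement construction for open sets, the reduction of constructible sets to finite unions of locally closed sets) are a legitimate self-contained route to the same statements. Two small points you should still address: in (2), the identity $\bigcup_{r,s}(X_{r,s}\cap Y_{r,s})=A\cap B$ uses the directedness of the index set (given $x\in X_{r,s}\cap Y_{r',s'}$, pass to $(\max(r,r'),\min(s,s'))$), and in (4) you should observe that a \emph{definable} locally closed set can be written as $\cl(X)\cap U$ with $U=\R^n\setminus\cl(\cl(X)\setminus X)$ definable, since the closed and open sets appearing in an arbitrary presentation need not themselves be definable.

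The genuine gap is in (5). Your argument rests on producing a uniformly definable three-parameter family $K^{(t)}_{r,s}$ witnessing that each $A_t$ is $\DSig$, and you justify this by appealing to the uniformity of the constructions in (4). But the sets $A_t$ in (5) are arbitrary $\DSig$ sets, not boolean combinations of closed sets: the hypothesis only asserts that for each individual $t$ \emph{some} definable witnessing family exists, and the definability of the family $\{A_t\}$ gives no mechanism for selecting these witnesses uniformly in $t$. So the step you yourself identify as ``the heart of the argument'' is precisely the one you do not carry out. The subsequent collapse is also not as routine as claimed: forcing $t\mapsto K^{(t)}_{r,s}$ to be monotone by ``enlarging'' naturally leads to closures of unions over a continuum of parameters, and taking closures can push the sets outside $A_t$, so that $\bigcup_{r,s}W_{r,s}$ may strictly exceed $A$; without monotonicity in $t$ the substitution $W_{r,s}=K^{(s)}_{r,s}$ need not exhaust $A$. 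For comparison, the paper's own proof of this Fact says nothing about (5); in the one place it is actually used (the proof of Lemma~\ref{oscil}), the sets $A_\epsilon$ are uniformly of the form (open)$\,\cap\,$(closed) and increase as $\epsilon$ decreases, so there the uniform and $t$-monotone witnesses really do come from your construction in (4) and the collapse goes through. You should either prove (5) under an explicit uniformity hypothesis of that kind, or supply an argument that manufactures uniform witnesses from the bare assumption that each $A_t$ is $\DSig$; as written, the claim is not established.
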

\begin{proof}
For (1), observe that a continuous map always maps compact sets to compact sets. For (2) and (4), see \cite[1.9(1)]{DMS1} and \cite[1.10(2)]{DMS1}, (3) follows from (2).
\end{proof}

\noindent The complement of a $\DSig$ set need not be $\DSig$ (see \cite[p.1378]{DMS1} for an example).

\begin{fact}\label{fact:dimdsig} Let $A \subseteq \R^{m + n}$ be $\DSig$. Then
\begin{enumerate}
\item $\{ x \in \R^m \ : \ A_x \hbox{ has interior }\}$ is $\DSig$,
\item $\{ x \in \R^m \ : \ \dim A_x \geq k \}$ is $\DSig$ for any $k$.
\end{enumerate}
\end{fact}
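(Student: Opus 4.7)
The plan is to prove (1) by constructing an explicit definable family witnessing that $\{x \in \R^m : A_x \text{ has interior}\}$ is $\DSig$, and then deduce (2) by reducing to (1) via coordinate projections.

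Fix a family $\{X_{r,s} : r,s > 0\}$ witnessing that $A$ is $\DSig$, and let $\bar B(y,s)$ denote the closed $l_\infty$-ball around $y$ of radius $s$. I will set
\[
F_{r,s} := \bigl\{\, x \in \R^m : \|x\| \leq r \text{ and } \exists y \in \R^n,\ \|y\| \leq r,\ \{x\} \times \bar B(y,s) \subseteq X_{r,s} \bigr\},
\]
and verify that $\{F_{r,s} : r,s > 0\}$ witnesses the $\DSig$-ness of the target set. Definability is immediate, and $F_{r,s}$ is compact because it is the projection onto $\R^m$ of a closed and bounded subset of $\R^{m+n}$, with closedness of the preimage following from closedness of $X_{r,s}$ via a routine limit argument. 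The monotonicity $F_{r,s} \subseteq F_{r',s'}$ whenever $r \leq r'$ and $s \geq s'$ follows from $X_{r,s} \subseteq X_{r',s'}$ together with $\bar B(y,s') \subseteq \bar B(y,s)$. A key point here is to take the ball radius to be $s$ rather than, say, $1/s$; otherwise the monotonicity would go the wrong way.

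To finish (1), I will check that $\bigcup_{r,s>0} F_{r,s} = \{x : A_x \text{ has interior}\}$. The forward inclusion is immediate: $x \in F_{r,s}$ implies $\bar B(y,s) \subseteq (X_{r,s})_x \subseteq A_x$. For the converse, suppose $A_x \supseteq U$ for some nonempty open $U$. Writing $U = \bigcup_{r,s \in \Q_{>0}} (U \cap (X_{r,s})_x)$ as a countable union of relatively closed subsets, the Baire category theorem applied to the Baire space $U$ yields some $(X_{r_0,s_0})_x$ with nonempty interior, hence containing some $\bar B(y_0,\varepsilon_0)$. Choosing $r' \geq \max(r_0,\|x\|,\|y_0\|)$ and $s' \leq \min(s_0,\varepsilon_0)$ then places $x$ in $F_{r',s'}$.

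For (2), I will use that $\dim A_x \geq k$ iff some coordinate projection $\pi : \R^n \to \R^k$ satisfies that $\pi(A_x)$ has nonempty interior. Given such $\pi$, let $\pi' : \R^{m+n} \to \R^{m+k}$ be $(x,y)\mapsto (x,\pi(y))$, so that $(\pi'(A))_x = \pi(A_x)$ and $\pi'(A)$ is $\DSig$ by Fact~\ref{basicDsigma}(1). Applying (1) to $\pi'(A)$ shows that $\{x : \pi(A_x) \text{ has interior}\}$ is $\DSig$, and $\{x : \dim A_x \geq k\}$ is the finite union of such sets over all $\binom{n}{k}$ coordinate projections $\pi$, hence $\DSig$ by Fact~\ref{basicDsigma}(2). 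The main obstacle is the careful setup in (1): aligning the ball-radius parameter with the $\DSig$ monotonicity conventions, with Baire category supplying the essential content of the reverse inclusion.
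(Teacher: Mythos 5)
Your proof is correct and follows essentially the same route as the paper: the paper also witnesses $\DSig$-ness of $\{x : A_x \text{ has interior}\}$ by the family of sets of those $x$ for which $(X_{r,s})_x$ contains a box of side $s$ (compact for the same projection-of-a-closed-bounded-set reason), with the reverse inclusion coming from Baire category, and it likewise derives (2) from (1) via coordinate projections and closure of $\DSig$ under projections and finite unions. You have merely spelled out details the paper leaves to the reader.
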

\begin{proof}
It is not hard to see that (2) follows from (1) and the fact that a projection of a $\DSig$ set is $\DSig$. In order to establish (1), let $\{ X_{r,s} : r,s > 0\}$ be a definable family that witnesses that $A$ is $\DSig$. Then
\begin{align*}
\{ x \in \R^m \ & : \ A_x \hbox{ has interior }\}\\
&=\bigcup_{r,s \in \R_{>0}} \left\{ x \in \R^m \ : \ \exists y_1,\ldots,y_{n} \in \R \quad \prod_{i = 1}^{n} [y_i,y_i+s] \subseteq X_{r,s,x}\right\}.
\end{align*}
One can check that since $X_{r,s}$ is compact for each $r,s$, so is
\[
\left\{ x \in \R^m \ : \ \exists y_1,\ldots,y_n \in \R \quad \prod_{i = 1}^{n} [y_i,y_i+s] \subseteq X_{r,s,x}\right\}.
\]
Now (1) follows.
\end{proof}

\section{Dense $\omega$-orderable sets}

In this section we establish basic facts about expansions of $(\R,<,+)$ that do not define dense $\omega$-orderable sets and prove Theorem C. Unless we say otherwise, $\Cal R$ is an expansion of $(\R,<,+)$ and \emph{`definable'} means \emph{`definable in $\Cal R$'} (possibly with parameters). We recall the following definition.

\begin{defn} A \textbf{dense $\omega$-orderable set} is a definable subset of $\R$ that is dense in some open interval and admits a definable ordering with order type $\omega$.
\end{defn}

\begin{fact}\label{fact:basicomega}
Let $D \subseteq \R^l$ be $\omega$-orderable, let $l,m,n\in \N$ and let $f: \R^{l} \to \R^m$ be definable. Then
 $D^n$ and $f(D)$ are $\omega$-orderable.
\end{fact}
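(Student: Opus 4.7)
The plan is to reduce both claims to the case where $D$ is infinite, since finite sets are $\omega$-orderable by definition. In that case, fix a definable linear order $\prec$ on $D$ of order type $\omega$; equivalently, every proper $\prec$-initial segment of $D$ is finite, and in particular $D$ is countably infinite. The two assertions are then handled by exhibiting explicit definable well-orderings, using two different tricks.

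For $f(D)$, assume $f(D)$ is infinite. For each $e \in f(D)$ the fiber $f^{-1}(e) \cap D$ is a nonempty definable subset of $D$ and hence has a $\prec$-least element $g(e)$; the map $e \mapsto g(e)$ is definable and injective. Declare $e \prec_f e'$ iff $g(e) \prec g(e')$. Then $\prec_f$ is a definable linear order on $f(D)$, and the $\prec_f$-initial segment below any $e$ injects via $g$ into the finite set $\{d \in D : d \prec g(e)\}$, hence is finite. Since any countably infinite linear order all of whose proper initial segments are finite has order type exactly $\omega$, this shows that $f(D)$ is $\omega$-orderable.

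For $D^n$, define $M : D^n \to D$ by $M(d_1, \ldots, d_n) = \max_{\prec}\{d_1, \ldots, d_n\}$; this is definable because $\prec$ is. Order $D^n$ by declaring $(d_1, \ldots, d_n) \prec_n (e_1, \ldots, e_n)$ iff $M(d_1, \ldots, d_n) \prec M(e_1, \ldots, e_n)$, or these maxima coincide and $(d_1, \ldots, d_n)$ precedes $(e_1, \ldots, e_n)$ in the lexicographic order induced by $\prec$. This is a definable linear order, and for each $\bar e \in D^n$ the $\prec_n$-initial segment below $\bar e$ is contained in $\bigl(\{d \in D : d \prec M(\bar e)\} \cup \{M(\bar e)\}\bigr)^n$, which is finite. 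As in the previous case, a countably infinite linear order with every proper initial segment finite has order type exactly $\omega$, so $\prec_n$ witnesses $\omega$-orderability of $D^n$.

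The only subtle point is the bucketing step for $D^n$: a naive lexicographic order on $D^n$ would give order type $\omega^n$, not $\omega$, so one must first group tuples by their $\prec$-maximum coordinate and only then break ties lexicographically. Everything else reduces to the elementary observation about countably infinite linear orders whose proper initial segments are all finite.
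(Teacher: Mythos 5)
Your proof is correct and follows essentially the same route as the paper: the max-then-lexicographic bucketing for $D^n$ and ordering $f(D)$ by $\prec$-least preimages are exactly the paper's constructions. The extra justification via the characterization of order type $\omega$ by finiteness of proper initial segments is a detail the paper leaves implicit.
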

\begin{proof}
The fact is clear if $D$ is finite, we suppose that $D$ is infinite.
Let $\prec$ be a definable ordering on $D$ with order type $\omega$. Let $\prec_{lex}$ be the lexicographic order on $D^n$ and let $\max_{\prec} : D^n \to D$ map $(x_1,\dots,x_n)$ to the $\prec$-maximum of $\{x_1,\dots x_n\}$. Note that both $\prec_{lex}$ and $\max_{\prec}$ are definable. Let $x=(x_1,\dots,x_n),y=(y_1,\dots,y_n) \in D^n$. We say that $x \prec_n y$ if either
\begin{itemize}
\item $\max_{\prec} x \prec \max_{\prec} y$ or
\item $\max_{\prec} x = \max_{\prec} y$ and $x\prec_{lex} y$.
\end{itemize}
Observe that $\prec_n$ is definable and  $(D^n,\prec_n)$ has order type $\omega$.\newline
Let $f : D \to \R^m$ be definable, and suppose that $f(D)$ is infinite. The $\omega$-order $\prec_f$ on $f(D)$ is given as follows: given $x,y \in f(D)$ we declare $x\prec_f y$ if there is a $d \in D$ such that $f(d)=x$ and $f(e)\neq y$ for all $e \prec d$.
\end{proof}

\noindent By \cite[Theorem A]{HW-Monadic}, an expansion of $(\R,<,+)$ that does not define the two-sorted structure $\Cal B:= (\Cal P(\N),\N,\in,+1)$ cannot define a dense $\omega$-orderable set. Such structures were studied in detail in \cite{HW-Monadic}. However, while the results in \cite{HW-Monadic} were stated for expansions that do not define $\Cal B$, many of their proofs only made use of the fact that such structures do not define dense $\omega$-orderable sets. In hindsight, this should been made clear in \cite{HW-Monadic}, but the authors did not anticipate the relevance of the weaker assumption. For the reader's convenience we recall the main results from the aforementioned paper that we use here.

\begin{fact}[{Weak Monotonicity Theorem \cite[Theorem E]{HW-Monadic}}] \label{fact:monotonicity}
Suppose that $\mathcal R$ does not define a dense $\omega$-orderable set.
Let $f: \R \to \R$ be a definable continuous function. Then there is a definable open dense $U \subseteq \R$ such that $f$ is strictly increasing, strictly decreasing, or constant on each connected component of $U$.
\end{fact}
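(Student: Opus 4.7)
The plan is to define the desired $U$ directly and show it is dense by extracting an obstruction whenever density fails. Take $U_{\mathrm{inc}}$, $U_{\mathrm{dec}}$, $U_{\mathrm{const}}$ to be the sets of $x \in \R$ admitting an open neighborhood on which $f$ is strictly increasing, strictly decreasing, or constant, respectively; set $U = U_{\mathrm{inc}} \cup U_{\mathrm{dec}} \cup U_{\mathrm{const}}$. Each piece is definable, open, and pairwise disjoint from the others, and a routine compactness-patching along each connected component of each piece confirms that $f$ is in fact strictly increasing, strictly decreasing, or constant on that component. So the real content is proving $U$ dense.

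Suppose for contradiction that $U$ fails to be dense, and choose a bounded open interval $I$ disjoint from $U$. On $I$ the function $f$ is nowhere locally monotone and nowhere locally constant. A standard fact of real analysis then applies in every subinterval $J \subseteq I$: since $f|_J$ is not monotone, there exist $a<b<c$ in $J$ with either $f(a)<f(b)>f(c)$ or the symmetric pattern, and any such triple forces $f$ to attain a strict local extremum inside $[a,c]$. Thus the definable set
\[
S \;=\; \{x \in I : f \text{ has a strict local maximum at } x\}
\]
is dense in $I$.

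For each $x \in S$ put
\[
d(x) \;=\; \sup\{\varepsilon > 0 : f(y) < f(x) \text{ for all } y \in (x-\varepsilon,x+\varepsilon)\setminus\{x\}\},
\]
a definable function which is strictly positive on $S$. The key estimate is that if $x_1 \neq x_2$ both lie in $S$ with $d(x_1), d(x_2) \geq \varepsilon$ and $|x_1 - x_2| < \varepsilon$, then $f(x_2)<f(x_1)$ and $f(x_1)<f(x_2)$, a contradiction; hence $\{x \in S : d(x) \geq \varepsilon\}$ is $\varepsilon$-separated, and since it lies in the bounded interval $I$, it is finite. Order $S$ by declaring $x \prec y$ iff $d(x)>d(y)$, or $d(x)=d(y)$ and $x<y$; this is a definable well-ordering under which every element has only finitely many predecessors, i.e.\ of order type $\omega$. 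Thus $S$ is a definable dense $\omega$-orderable subset of $\R$, contradicting the hypothesis on $\mathcal R$. The main obstacle I anticipate is precisely this passage from the failure of local monotonicity to a \emph{definable} enumeration; the function $d$ provides the uniform scale on which local maxima can be sorted definably.
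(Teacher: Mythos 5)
The paper does not actually prove this statement---it is imported from \cite[Theorem E]{HW-Monadic}---so I can only judge your argument on its own terms. Most of it is sound: the reduction to showing $U$ dense, the extraction of a peak (or valley) triple in every subinterval of an interval $I$ disjoint from $U$, the separation estimate showing that $\{x \in S : d(x) \geq \varepsilon\}$ is $\varepsilon$-separated and hence finite, and the definable ordering by decreasing $d$ having order type $\omega$ are all correct. The gap is the sentence ``any such triple forces $f$ to attain a strict local extremum inside $[a,c]$,'' which is exactly where the weight of the proof rests. This is false for a general continuous function: a peak triple guarantees that $\max_{[a,c]} f$ is attained at an interior point, but the set of maximizers can be a perfect set, in which case no maximizer is a \emph{strict} local maximum (e.g.\ $f(x)=-\operatorname{dist}(x,C)$, $C$ the Cantor set, attains its maximum on $[-1,2]$ exactly on $C$). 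Moreover, even the claim as you state it would produce a strict local \emph{extremum}, whereas your $S$ consists of strict local \emph{maxima} only. Whether strict local maxima (or even strict local extrema) must be dense for a continuous function that is weakly monotone on no subinterval is a genuinely delicate question of classical real analysis, and you have given no argument for it; so the central step is unsupported.

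The rest of your argument survives a small change of definition. Replace ``strict local maximum'' by ``leftmost maximizer'': let $S$ be the set of $x \in I$ for which there are $p < x < q$ with $f(y) < f(x)$ for all $y \in [p,x)$ and $f(y) \leq f(x)$ for all $y \in (x,q]$. For any peak triple $a<b<c$ the point $\min\bigl(\arg\max_{[a,c]} f\bigr)$ lies in $(a,c)$ and belongs to $S$, so density of $S$ in $I$ is immediate; and if you set $d(x) = \sup\{\varepsilon>0 : f(y)<f(x) \text{ for } y\in(x-\varepsilon,x) \text{ and } f(y)\leq f(x) \text{ for } y\in(x,x+\varepsilon)\}$, then two points $x_1<x_2$ of $S$ with $d(x_1),d(x_2)\geq\varepsilon$ and $|x_1-x_2|<\varepsilon$ give $f(x_2)\leq f(x_1)<f(x_2)$, so your separation, finiteness, and $\omega$-ordering steps go through verbatim. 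You should also record, as you implicitly use, that every subinterval contains a \emph{peak} triple and not merely a peak-or-valley triple; this holds because a continuous function with no peak triple is quasiconvex, hence weakly monotone on a subinterval, contradicting $I\cap U=\emptyset$.
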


\begin{fact}[{\cite[Lemma 3.4]{HW-Monadic}}] \label{fact:oldunary}
Suppose that $\mathcal R$ does not define a dense $\omega$-orderable set. Let $\{ X_{r,s} : r, s > 0 \}$ be a definable family of subsets of $\R$ such that
 for all $r,r',s,s' > 0$:
\begin{itemize}
\item[(i)] $X_{r,s}$ is nowhere dense,
\item[(ii)] $X_{r,s} \subseteq X_{r',s'}$ if $r\leq r'$ and $s \geq s'$.
\end{itemize}
Then $\bigcup_{r,s} X_{r,s}$ is nowhere dense.
\end{fact}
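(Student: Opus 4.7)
The plan is to argue by contradiction: suppose $U := \bigcup_{r,s > 0} X_{r,s}$ is dense in some open interval $I$, and produce a definable dense $\omega$-orderable subset of $I$, contradicting the hypothesis on $\mathcal R$.

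I would first collapse the two-parameter family to a chain by a diagonal trick. Setting $Y_t := X_{t, 1/t}$ for $t > 0$, hypothesis (ii) gives $Y_t \subseteq Y_{t'}$ whenever $t \leq t'$, and every $X_{r,s}$ is contained in $Y_t$ for $t \geq \max(r, 1/s)$, so $\bigcup_{t > 0} Y_t = U$. Each $Y_t$ is nowhere dense by (i), and replacing $Y_t$ by $\overline{Y_t}$ lets me assume each $Y_t$ is also closed. The question is thus reduced to the one-parameter analogue: a definable $\subseteq$-increasing family of closed nowhere dense sets cannot have a somewhere dense union.

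With this reduction, introduce the definable \emph{level function} $\tau : I \to [0, \infty]$ by $\tau(x) := \inf\{t > 0 : x \in Y_t\}$. Since $\{\tau < M\} \subseteq Y_M$, every sublevel set is nowhere dense, yet $\{\tau < \infty\} = U \cap I$ is dense in $I$. Consequently, for every open subinterval $J \subseteq I$ and every $M > 0$, the set $\{x \in J : M < \tau(x) < \infty\}$ is dense in $J$, and the range $\tau(I) \cap \R$ is unbounded in $\R_{> 0}$. The goal is now to convert this ``$\tau$ is finite and densely unbounded'' behavior into a definable dense $\omega$-orderable subset of $I$. The natural candidate construction uses, for each fixed $x_0 \in I$, the definable ``right-neighbor'' functions $r(x_0, t) := \inf\{y > x_0 : y \in Y_t\}$, which are monotone (non-increasing) in $t$ with $r(x_0, t) \searrow x_0$ as $t \to \infty$. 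Applying the Weak Monotonicity Theorem (Fact \ref{fact:monotonicity}) to continuous definable variants of such functions should break $\R_{> 0}$ into a definable chain of intervals along which the function is strictly monotone or constant; organizing the strictly monotone pieces into a definable $\omega$-sequence and selecting one point per piece would then give a definable subset of $I$ accumulating at $x_0$ with a definable $\omega$-enumeration.

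The main obstacle is this last construction. The functions in sight ($\tau$ itself, or $r(x_0, \cdot)$) need not be continuous — $Y_t$ may grow discontinuously in $t$ — so Fact \ref{fact:monotonicity} does not apply to them directly. Significant care is needed to pass to tame continuous variants, to argue that the resulting definable decomposition of the parameter space is of order type $\omega$ on the relevant portion, and to confirm that the enumerated dense subset of $I$ is truly $\omega$-orderable (and not merely countable). I expect the argument in \cite{HW-Monadic} to handle these subtleties by induction on the structural complexity of definable unary sets, leveraging the sublevel-set dichotomy above together with earlier lemmas specific to one-dimensional definable sets in that paper.
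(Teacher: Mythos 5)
First, note that the paper does not prove this statement at all: it is imported verbatim as \cite[Lemma 3.4]{HW-Monadic}, so there is no in-paper proof to match your argument against. Judged on its own terms, your proposal contains correct but essentially routine reductions --- the diagonalization $Y_t := X_{t,1/t}$ to a one-parameter increasing chain, passing to closures, and the observation that every sublevel set of $\tau$ is nowhere dense while $\{\tau<\infty\}$ is dense --- and then stops exactly where the lemma actually lives. The entire content of the statement is the production of a \emph{definable ordering of order type $\omega$} on a dense set, and your proposal explicitly defers that step (``Significant care is needed \dots I expect the argument in \cite{HW-Monadic} to handle these subtleties''). The specific difficulty is not merely the discontinuity of $\tau$ or of $r(x_0,\cdot)$: even granting the monotonicity and the convergence $r(x_0,t)\searrow x_0$, the range of a monotone definable function need not be $\omega$-orderable under the induced order (a non-increasing function can have range accumulating at points other than its limit, e.g.\ a Cantor-like range), so ``selecting one point per monotone piece'' does not by itself yield an $\omega$-enumeration, and no definable enumeration is exhibited. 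As written, the proof establishes nothing beyond the (easy) fact that a counterexample can be put in a normal form.

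There is also a structural concern with the one tool you do invoke. The Weak Monotonicity Theorem (Fact~\ref{fact:monotonicity}) for this class of structures is, in the source paper and in the present one, downstream of Baire-category-type statements of exactly the kind you are trying to prove: in this paper the dichotomy for definable functions (Lemma~\ref{oscil}) is derived from SBCT, which rests on Fact~\ref{fact:oldunary}. Using the monotonicity theorem to prove the increasing-union lemma therefore risks circularity unless you verify the order of dependencies in \cite{HW-Monadic}. A self-contained argument would instead have to work directly with the combinatorics of the closed nowhere dense sets $Y_t$ --- for instance via their complementary intervals, whose endpoints ordered by decreasing gap length do carry a definable $\omega$-order (this is the device used elsewhere in Section~3 of the paper) --- rather than routing through a monotonicity theorem for definable functions.
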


\begin{fact}[{\cite[Lemma 4.5]{HW-Monadic}}] \label{fact:oldunarylebesgue} Suppose that $\mathcal R$ does not define a dense $\omega$-orderable set.
Then a nowhere dense definable subset of $\R$ is Lebesgue null.
\end{fact}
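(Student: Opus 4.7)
The plan is to argue by contradiction: supposing $X \subseteq \R$ is definable, nowhere dense, and of positive Lebesgue measure, I will construct a definable dense $\omega$-orderable set, contradicting the hypothesis. Replacing $X$ by $\cl(X \cap [-N, N])$ for some $N$ with $\mu(X \cap [-N, N]) > 0$, I may assume $X$ is closed, bounded, nowhere dense, and of positive Lebesgue measure.

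The key construction is the definable set $G$ of endpoints of bounded connected components of $\R \setminus X$ (the \emph{bounded gaps} of $X$). For $g \in G$, let $\ell(g)$ denote the maximum length of a bounded gap of $X$ adjacent to $g$, a definable function. Since $X$ is bounded, for each $r > 0$ only finitely many bounded gaps of $X$ have length $\geq r$, and there are infinitely many bounded gaps altogether (otherwise $X$ would be a finite union of closed intervals, hence either have interior or be finite, both incompatible with our assumptions). The definable linear order $\prec$ on $G$ given by $g \prec g'$ iff $\ell(g) > \ell(g')$, or $\ell(g) = \ell(g')$ and $g < g'$, then has every initial segment finite, so $(G, \prec)$ has order type $\omega$.

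Density of $G$ in $X$ follows from a brief case analysis: for $x \in X$ and an open neighborhood $I \ni x$, either $x$ is isolated in $X \cap I$ and hence $x \in G \cap I$, or there is some $x' \in X \cap I$ with $x' \neq x$; in the latter case, nowhere-density of $X$ yields $y \notin X$ in the open interval between $x$ and $x'$, and the bounded gap of $X$ containing $y$ lies entirely in $I$ and has both endpoints in $G \cap I$.

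Finally, by Steinhaus's theorem, $X + X$ contains an open interval $J$. Since addition is continuous and $G$ is dense in $X$, the image $G + G$ is dense in $X + X \supseteq J$. Applying Fact~\ref{fact:basicomega} twice---first with $n = 2$ to see that $G^2$ is $\omega$-orderable, then with the definable map ${+}\colon \R^2 \to \R$ to conclude that $G + G = {+}(G^2)$ is $\omega$-orderable---I obtain that $G + G$ is a definable dense $\omega$-orderable set, contradicting the hypothesis. The only slightly subtle step is the density of $G$ in $X$; the remainder is a straightforward combination of Steinhaus's theorem and the $\omega$-orderability closure properties of Fact~\ref{fact:basicomega}.
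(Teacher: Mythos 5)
Your proof is correct, but note that the paper itself gives no argument for this statement: it is imported verbatim as \cite[Lemma 4.5]{HW-Monadic}, so there is nothing internal to compare against. What you have written is a sound self-contained proof, and it runs on exactly the circle of ideas the paper deploys elsewhere: ordering the endpoints of complementary intervals by gap length to get a definable order of type $\omega$ is precisely the device used in the Corollary following Theorem C (for the middle-thirds Cantor set), and the Steinhaus step, combined with passing from a set dense in $X$ to its sumset dense in an interval, mirrors both that corollary (where $E-E$ is dense in $C-C=[-1,1]$) and the (iii)$\Rightarrow$(iv) step of Theorem~\ref{seven}. All the individual steps check out: the reduction to $X$ closed, bounded, nowhere dense, of positive measure is harmless; only finitely many disjoint gaps of length $\geq r$ fit inside a bounded interval, so every $\prec$-initial segment is finite and $(G,\prec)$ has type $\omega$; and $\cl(G)+\cl(G)\subseteq\cl(G+G)$ gives density of $G+G$ in $X+X$. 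Two small points deserve one extra line each if you write this up: in the isolated-point case of the density argument you should observe that an isolated point of a bounded closed set with more than one element is the endpoint of at least one \emph{bounded} gap (an extremal isolated point is adjacent to an unbounded component on one side), and you should say that the $A+B$ form of Steinhaus (rather than $A-A$) is what you are invoking, via $X+X = X-(-X)$. Neither is a gap, just a gloss.
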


\noindent Fact~\ref{fact:oldunary} and Fact~\ref{fact:oldunarylebesgue} yield Theorem D in the case $n=1$:

\begin{cor}\label{oldunary}
Suppose that $\mathcal R$ does not define a dense $\omega$-orderable set and that $A$ is a $\DSig$ subset of $\R$.
Then the following are equivalent:
\begin{enumerate}
\item $A$ does not have interior.
\item $A$ is nowhere dense.
\item $A$ is Lebesgue null.
\end{enumerate}
\end{cor}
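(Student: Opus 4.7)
The plan is to establish the cycle of implications $(2) \Rightarrow (1) \Rightarrow (2) \Rightarrow (3) \Rightarrow (1)$, of which two links are essentially trivial and the remaining two are immediate applications of Fact~\ref{fact:oldunary} and Fact~\ref{fact:oldunarylebesgue}, respectively. I would begin by recording the two easy directions: a nowhere dense subset of $\R$ contains no nonempty open set, giving $(2) \Rightarrow (1)$; and any nonempty open subset of $\R$ contains an interval and hence has positive Lebesgue measure, giving $(3) \Rightarrow (1)$.

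The heart of the argument is $(1) \Rightarrow (2)$, which is where both hypotheses (the tameness of $\mathcal{R}$ and the $\DSig$-ness of $A$) actually enter. Fix a definable family $\{X_{r,s} : r,s > 0\}$ of compact sets witnessing that $A$ is $\DSig$. If $A$ has empty interior then so does each $X_{r,s} \subseteq A$; since the $X_{r,s}$ are closed, having empty interior is the same as being nowhere dense. The family $\{X_{r,s}\}$ is definable and monotone in the two parameters in the sense required by Fact~\ref{fact:oldunary}, so that fact applies and yields that $A = \bigcup_{r,s} X_{r,s}$ is nowhere dense.

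For $(2) \Rightarrow (3)$ I would simply cite Fact~\ref{fact:oldunarylebesgue}: under the standing assumption that $\mathcal{R}$ does not define a dense $\omega$-orderable set, every nowhere dense definable subset of $\R$ is Lebesgue null, and $A$ is definable. Combined with the trivial implications this closes the cycle and proves that (1), (2), and (3) are equivalent.

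I do not anticipate a genuine obstacle here; the real content was already absorbed into Fact~\ref{fact:oldunary} and Fact~\ref{fact:oldunarylebesgue}. The only point requiring a moment of care is noticing that in the step $(1) \Rightarrow (2)$ one should work with the compact approximants $X_{r,s}$ rather than directly with $A$, so as to be able to convert ``no interior'' into ``nowhere dense'' before invoking Fact~\ref{fact:oldunary}.
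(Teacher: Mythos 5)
Your proposal is correct and follows essentially the same route as the paper: $(3)\Rightarrow(1)$ is trivial, $(2)\Rightarrow(3)$ is Fact~\ref{fact:oldunarylebesgue}, and $(1)\Rightarrow(2)$ passes to the compact witnesses $X_{r,s}$, which are nowhere dense since they are closed with empty interior, and then applies Fact~\ref{fact:oldunary}. The additional implication $(2)\Rightarrow(1)$ you record is redundant but harmless.
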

\begin{proof}
It is clear that $(3)$ implies $(1)$.
Fact~\ref{fact:oldunarylebesgue} shows that $(2)$ implies $(3)$.
We show that $(1)$ implies $(2)$.
Let $\{ X_{r,s} : r,s > 0 \}$ be a definable family that witnesses that $A$ is $\DSig$. Suppose $A$ does not have interior. It follows that each $X_{r,s}$ does not have interior. As each $X_{r,s}$ is compact, $X_{r,s}$ is nowhere dense for all $r,s > 0$. Fact \ref{fact:oldunary} implies $A$ is nowhere dense.
\end{proof}

\subsection*{Proof of Theorem C} In this subsection we give a proof of Theorem C. This proof is an application of the following theorem from \cite{HT}.

\begin{fact}[{\cite[Theorem A]{HT}}]\label{thm:prhie}
Let $\Cal R$ be an expansion of $(\R,<)$. If $\Cal R$ defines an open interval $U\subseteq\R$, a closed and discrete set $D\subseteq\R_{\geq 0}$, and functions $f: D\rightarrow\R$ and $g\colon\R^3\times D\rightarrow D$ such that:
\begin{enumerate}
\item[(i)] $f(D)$ is dense in $U$,
\item[(ii)] for every $a,b\in U$ and $d,e\in D$ with $a<b$ and $e\leq d$,
\[ \{c\in\R:g(c,a,b,d)=e\}\cap (a,b)\mbox{ has nonempty interior,}\]\end{enumerate}
then $\Cal R$ defines every subset of $D^n$ and every open subset of $U^n$ for every $n\in\N$.
\end{fact}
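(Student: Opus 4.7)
The plan is a coding argument: use the ingredients $D, f, g$ to turn arbitrary subsets of $D$ into sets definable from real parameters; subsets of $D^n$ then follow by a definable pairing (available because $D$ is closed and discrete, hence order-isomorphic to $\N$ with definable successor), and open subsets of $U^n$ follow from density of $f(D)$. Enumerate $D = \{d_1 < d_2 < \cdots\}$ and fix an interval $(a_0, b_0) \subseteq U$ with endpoints in $f(D)$. Given a target $S \subseteq D$, I would construct a nested descending sequence $(a_n, b_n)$ of open subintervals of $(a_0, b_0)$ with endpoints in $f(D)$ and diameter tending to $0$: set $e_n := d_n$ if $d_n \in S$ and otherwise pick some $e_n \in D$ with $e_n < d_n$; by hypothesis~(ii) applied with $(a_{n-1}, b_{n-1}, d_n, e_n)$, the set $\{c : g(c, a_{n-1}, b_{n-1}, d_n) = e_n\} \cap (a_{n-1}, b_{n-1})$ has nonempty interior, so we can choose $(a_n, b_n)$ inside this interior with endpoints in $f(D)$. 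The intersection $\bigcap_n \overline{(a_n, b_n)}$ is then a single real $c = c_S$, which encodes $S$ in the sense that $g(c, a_{n-1}, b_{n-1}, d_n) = e_n$ for every $n$.

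The key step is then to verify that ``$d_n \in S$'' is first-order expressible in terms of $c$ together with the parameters $a_0, b_0$. The auxiliary sequence $(a_n, b_n)$ is itself uniformly $\Cal R$-definable from $c$: at each stage $n$, $(a_n, b_n)$ is the canonically chosen pair of $f(D)$-endpoints that contains $c$, lies inside $(a_{n-1}, b_{n-1})$, has diameter on an appropriate $d_n$-dependent scale, and is inside one of the nonempty-interior cells $\{c' : g(c', a_{n-1}, b_{n-1}, d_n) = e\}$ for some $e \leq d_n$. Because $f, g, D,$ and $<$ are all $\Cal R$-definable, this tree-traversal expresses ``the unique $e \le d_n$ selected by the scale-$d_n$ interval around $c$ equals $d_n$'' as a first-order formula in $(c, d_n)$, and this is precisely the defining condition for $d_n \in S$. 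Bootstrapping from subsets of $D$ to subsets of $D^n$ uses a definable pairing on $D$, and an open set $V \subseteq U^n$ is recovered from the subset $\{\vec d \in D^n : f(\vec d) \in V\} \subseteq D^n$ together with a definable ``radius'' assignment also indexed by $D^n$; both pieces of data are definable by the preceding step.

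The hard part will be the second step. Hypothesis~(ii) only supplies the nonempty-interior property for one $(a, b)$-pair at a time, so the construction of $c$ in the first step is intrinsically a countable induction with \emph{moving} auxiliary parameters $(a_n, b_n)$; at face value it looks as though a code of $S$ would require infinitely many pieces of syntactic data. The reason the conclusion is nonetheless finitary is that the entire inductive tree of intervals can be recovered from $c$ alone, navigated through the dense skeleton $f(D) \subseteq U$ and the closed discrete parameter set $D$. Rendering this tree-traversal uniform, so that ``$d \in S$'' becomes one first-order formula in $(c, d)$ rather than a schema, is the technical heart of the argument and the core contribution of \cite{HT}.
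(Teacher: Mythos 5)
The paper never proves this Fact: it is imported verbatim from \cite{HT}, so what your sketch must be measured against is the actual proof in that reference, and by your own admission you have not reproduced it. Your ``second step'' --- expressing ``$d_n\in S$'' by a single first-order formula in $(c,d_n)$ --- is exactly the content of the theorem, and you defer it to \cite{HT} as ``the technical heart of the argument and the core contribution.'' The first step alone (the nested-interval construction of $c_S$, which is essentially correct, modulo the trivial point that $e_n<d_n$ fails at $\min D$ and that the endpoints should be taken in $f(D)\cap U$) has no definability content whatsoever: it only shows that for each $S\subseteq D$ there \emph{exists} a real related to $S$ through $g$, which is true for cardinality reasons for any surjection onto $\mathcal{P}(D)$. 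A proof whose central step is a citation of the result being proved is not a proof.

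The gap is not merely an omitted verification; the mechanism you gesture at does not work as described. Hypothesis (ii) guarantees that the cell $\{c:g(c,a_{n-1},b_{n-1},d_n)=e_n\}$ has nonempty interior \emph{somewhere} in $(a_{n-1},b_{n-1})$, but gives no control over where; the construction is forced to follow the cells wherever they sit. Consequently you cannot arrange that $(a_n,b_n)$ belongs to any pre-definable family of ``canonical'' intervals --- for example the gaps of the finite sets $f(\{e\in D: e\leq d_n\})$, which is the only canonical ``$d_n$-dependent scale'' in sight (density of $f(D)$ kills any notion of ``closest $f(D)$-endpoints,'' since infima and suprema of a dense set need not lie in it). Moreover, even granting a canonical one-step selection rule, recovering $(a_n,b_n)$ from $c$ requires iterating that rule $n$ times, and unbounded iteration of a definable operation is not first-order expressible --- this is precisely why $(\N,<,+1)$ does not define addition; the decoder would also have to exclude ``pretender'' pairs of $f(D)$-points on which $g$ happens, uncontrollably, to take the marker values. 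Engineering a coding whose decoding needs no recursion and is immune to such pretenders is the actual work done in \cite{HT}. Finally, your passage from subsets of $D$ to subsets of $D^n$ via ``a definable pairing (available because $D$ is \dots order-isomorphic to $\N$ with definable successor)'' is false: a definable $\omega$-order with definable successor yields no definable pairing function (again, $(\N,<,+1)$), and the graph of any pairing is a subset of $D^3$ --- exactly the kind of set you have not yet shown to be definable, so invoking one is circular. The case of $D^n$ requires its own argument.
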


\noindent As $D$ is a closed and discrete subset of $\R_{\geq 0}$ the usual order on $\R$ induces an $\omega$-order on $D$.
Then $f(D)$ is $\omega$-orderable by Fact~\ref{fact:basicomega}.
  Indeed, a close inspection of the proof of \cite[Theorem A]{HT} reveals that the only property of $f$ and $D$ used is the fact that $f(D)$ is dense in $U$ and $\omega$-orderable.

\begin{prop}\label{prop:htplus}
Let $\Cal R$ be an expansion of $(\R,<)$. If $\Cal R$ defines an ordered set $(D,\prec)$, an open interval $U\subseteq\R$, and a function $g\colon\R^3\times D\rightarrow D$ such that
\begin{itemize}
\item[(i)] $(D,\prec)$ has order type $\omega$ and $D$ is dense in $U$,
\item[(ii)] for every $a,b\in U$ and $e,d\in D$ with $a<b$ and $e\preceq d$,
\[\{c\in\R:g(c,a,b,d)=e\}\cap(a,b)\mbox{ has nonempty interior,}\]
\end{itemize}
then $\Cal R$ defines every subset of $D^n$ and every open subset of $U^n$ for every $n\in \N$.
\end{prop}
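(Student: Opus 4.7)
The plan is to derive Proposition \ref{prop:htplus} from Fact \ref{thm:prhie} in essentially a notational manner. The a priori stronger hypothesis of Fact \ref{thm:prhie} requires a closed discrete set $D \subseteq \R_{\geq 0}$, an auxiliary function $f : D \to \R$, and uses the $\omega$-order on $D$ inherited from $(\R,<)$, with $f(D)$ playing the role of the dense $\omega$-orderable subset of $U$. Our Proposition \ref{prop:htplus} simply removes the intermediate set $D$ and function $f$, and places the dense $\omega$-orderable set (with an explicitly given $\omega$-order $\prec$) directly in $U$.

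First I would align the two setups by taking $f$ to be the identity map on $D$. Then $f(D) = D$ is dense in $U$ and is $\omega$-orderable via $\prec$; moreover $g : \R^3 \times D \to D$ has the correct signature. Condition~(ii) of Proposition \ref{prop:htplus}, phrased with $\prec$, is the verbatim analogue of condition~(ii) of Fact \ref{thm:prhie}: in the latter, because $D \subseteq \R_{\geq 0}$ is closed and discrete, the relation ``$e \leq d$'' coincides with the natural $\omega$-order on $D$, so in both cases the coding condition says that one can recover every element $\omega$-below $d$ by a suitable choice of $c \in (a,b)$.

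Second I would invoke the remark immediately following Fact \ref{thm:prhie}: the proof of \cite[Theorem A]{HT} only uses that $f(D)$ is dense in $U$ and $\omega$-orderable. The closed, discrete, and $\R_{\geq 0}$-valued hypotheses on $D$ in Fact \ref{thm:prhie} serve only to produce such an $f(D)$ via the restriction of $<$. Since in our setting $D$ already is dense in $U$ and is equipped with a definable $\omega$-order $\prec$, the same argument applies with $D$ in the role of $f(D)$ and $\prec$ in the role of the inherited order, yielding that $\Cal R$ defines every subset of $D^n$ and every open subset of $U^n$.

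The only real obstacle, and the reason this is stated as a separate proposition rather than a one-line remark, is the need to inspect the proof of \cite[Theorem A]{HT} carefully enough to confirm the claim made in the paragraph after Fact \ref{thm:prhie}: that no step uses the ambient $\R$-structure on the enumerating set beyond what is already captured by its $\omega$-order. Once this verification is complete, Proposition \ref{prop:htplus} follows directly by reading \cite[Theorem A]{HT} with $D, \prec, g$ in place of $f(D)$ and its induced $\omega$-order.
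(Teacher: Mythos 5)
Your proposal is correct and matches the paper's own treatment: the paper likewise observes (in the remark after Fact \ref{thm:prhie}) that the proof of \cite[Theorem A]{HT} uses only that the enumerating set is dense in $U$ and $\omega$-orderable, and then proves Proposition \ref{prop:htplus} by rerunning that argument with $(D,\prec)$ in place of $f(D)$ and its induced order, leaving the routine modifications to the reader. Your added caveat that one must rerun the proof rather than literally invoke Fact \ref{thm:prhie} (since $D$ need not be closed and discrete) is exactly the right point.
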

\begin{proof}
Essentially the same proof as that of \cite[Theorem A]{HT} may be used to show the statement of Proposition~\ref{prop:htplus}. We leave these routine modifications to the reader.
\end{proof}

\noindent We now prove Theorem C. In fact, we prove a stronger result. For $D\subseteq \R$, we denote by $\Q(D)$ the subfield of $\R$ generated by $D$.

\begin{thm}
Let $\mathcal R$ be an expansion of $(\R,<,+)$.
\begin{enumerate}
\item If $\mathcal R$ defines every compact set, then $\mathcal R$ defines a dense $\omega$-orderable set.
\item If $\mathcal R$ defines a dense $\omega$-orderable set $D$ and there is $\lambda \in \R\setminus \Q(D)$ such that $\Cal R$ defines multiplication by $\lambda$, then $\mathcal R$ defines every compact set.
\end{enumerate}
\end{thm}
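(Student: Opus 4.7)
The plan is to treat the two parts separately: (1) is a direct construction of a dense $\omega$-orderable set from a single compact parameter, and (2) is an application of Proposition \ref{prop:htplus} whose hard content is the construction of the coding function $g$.

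For (1), I would fix an enumeration $(q_n)_{n\ge 1}$ of $\Q\cap[0,1]$ without repetition and consider
\[
K := \{(2^{-n}, q_n) : n\ge 1\}\ \cup\ \bigl(\{0\}\times[0,1]\bigr)\ \subseteq\ \R^2,
\]
which is bounded and closed (any limit point not of the form $(2^{-n},q_n)$ has first coordinate $0$ and so lies in $\{0\}\times[0,1]$). By hypothesis $K$ is $\mathcal R$-definable, hence so is $D := \{y : \exists x>0\ (x,y)\in K\} = \{q_n : n\ge 1\}$, and $D$ is dense in $[0,1]$. The relation $y\prec y'$ defined by ``the unique $x>0$ with $(x,y)\in K$ strictly exceeds the analogous $x'$ for $y'$'' is $\mathcal R$-definable, and via the correspondence $q_n\leftrightarrow 2^{-n}$ it reverses the usual order on $\N$, giving order type $\omega$. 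So $D$ is a dense $\omega$-orderable set.

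For (2), the plan is to invoke Proposition \ref{prop:htplus} with the given $(D,\prec)$ and $U$: hypothesis (i) is immediate, so the whole problem reduces to producing an $\mathcal R$-definable $g\colon \R^3\times D\to D$ satisfying (ii), namely that for all $a<b$ in $U$ and $e\preceq d$ in $D$ the fiber $\{c\in(a,b) : g(c,a,b,d)=e\}$ has nonempty interior. Once such $g$ is available, the proposition yields every open subset of $U^n$ as $\mathcal R$-definable, hence every closed subset of $U^n$; and since $\mathcal M_\Q\subseteq \mathcal R$ supplies rational translations and rescalings, an arbitrary compact $C\subseteq \R^k$ may be $\Q$-affinely moved inside $U^k$, making it $\mathcal R$-definable.

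The core difficulty is the construction of $g$. The hypothesis $\lambda\notin \Q(D)$ is to be used through the fact that for any finite $F\subseteq D$ the map $F\times F\to\R$, $(e,e')\mapsto e+\lambda e'$, is injective: a collision would force $\lambda\in\Q(F)\subseteq\Q(D)$. Combining this injectivity with the $\prec$-successor function on $D$, the idea is to encode each $c\in(a,b)$ by a $\lambda$-scaled representation whose ``second digit'' is returned as $g(c,a,b,d)\in E_d := \{e'\in D : e'\preceq d\}$, so that each $e\preceq d$ labels a subinterval of $(a,b)$. The main obstacle will be expressing this labeling as a single first-order formula in $\mathcal R$: the cardinality $|E_d|$ varies with $d$ and cannot be referred to directly, so the ordering $\prec$ together with multiplication by $\lambda$ must together simulate the recursion along $E_d$, and the uniqueness of the $\lambda$-coding (guaranteed by $\lambda\notin \Q(D)$) is exactly what ensures that the extracted label is well-defined.
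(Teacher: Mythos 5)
Your part (1) is correct and is essentially the paper's own argument: the paper likewise encodes a bijection between a definable discrete set (there $\{1/n : n\geq 1\}$ rather than $\{2^{-n}\}$) and $\Q\cap[0,1]$ as a bounded set whose closure is compact, recovers the graph by intersecting with $\{x>0\}$, and pulls back the reversed order on the first coordinate to get order type $\omega$.

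Part (2) has a genuine gap. You correctly reduce the problem to producing a definable $g\colon\R^3\times D\to D$ satisfying hypothesis (ii) of Proposition~\ref{prop:htplus}, and you correctly sense that $\lambda\notin\Q(D)$ must enter through an injectivity statement, but you then stop: the ``$\lambda$-scaled representation whose second digit is returned'' is never defined, and you yourself flag writing it as a first-order formula as ``the main obstacle.'' That obstacle is the entire mathematical content of this half of the theorem, and the digit-extraction idea as described is not obviously workable: condition (ii) demands that for each $e\preceq d$ the fiber $\{c\in(a,b):g(c,a,b,d)=e\}$ contain an interval, and a recursive labeling along $D_{\preceq d}$ neither produces this automatically nor is it clear how $\prec$ and multiplication by $\lambda$ ``simulate the recursion'' in one formula. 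The paper's construction avoids recursion entirely. Using density of $D$ in $U$, it defines $h_1(u,d,e)$ to be the $\prec'$-least $x\in\lambda D$ with $x\in(e,e+u)$ and $D_{\preceq d}\cap(e,x]=\emptyset$ (where $\prec'$ is the order pushed forward to $\lambda D$), and sets $h_2(u,d,e)=h_1(u,d,e)-e$. A coincidence $h_2(u,d,e)=h_2(u,d,e')$ with $e\neq e'$ would give $\lambda(f-f')=e-e'$ for some $f,f'\in D$, forcing $\lambda\in\Q(D)$; so $e\mapsto h_2(u,d,e)$ is injective on the finite set $D_{\preceq d}$ --- this is where your algebraic observation actually gets used. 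Finally $g(c,a,b,d)$ is the $e\in D_{\preceq d}$ minimizing $|(c-a)-h_2(b-a,d,e)|$; since the finitely many values $h_2(b-a,d,e)$ are distinct and lie in $(0,b-a)$, each $e$ is the strict minimizer on an open subinterval of $(a,b)$, which is exactly condition (ii). Without some such explicit definition of $g$ and verification of (ii), your proof of (2) is incomplete.
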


\noindent This implies Theorem C as the subfield generated by an $\omega$-orderable set is countable.

\begin{proof}
Suppose $\Cal R$ defines every compact set.
Let $D = \{ \frac{1}{n} : n \in \mathbb{N}, n \geq 1 \}$.
Let $f : D \to \mathbb{Q} \cap [0,1]$ be a bijection.
Let $G \subseteq \R^2$ be the graph of $f$ and let $C$ be the closure of $G$.
Then $C$ is compact as $G$ is bounded and it is easy to see that
$$ G = C \cap \{ (x,y) \in \R : x > 0 \}. $$
It follows that $G$ is definable, and hence so is $\Q\cap [0,1]$.
We define an $\omega$-order on $\mathbb{Q} \cap [0,1]$ by declaring $p \prec q$ if $f^{-1}(q) < f^{-1}(p)$.
\newline

\noindent Suppose that $\Cal R$ defines a dense $\omega$-orderable set $(D,\prec)$ and defines multiplication by a positive $\lambda \in \R\setminus \Q(D)$. First observe that there is a definable order $\prec'$ on $\lambda D$ such that multiplication by $\lambda$ is an order isomorphism between $(D,\prec)$ and $(\lambda D, \prec')$. Let $U\subseteq \R$ be an open interval in which $D$ is dense. As $\Cal R$ expands $(\R,<,+)$, all compact sets are definable if there is a definable function $g: \R^3 \times D \to D$ that satisfies condition (ii) of Proposition \ref{prop:htplus}.
We now follow the proof of Theorem C in \cite{HT}. Define a function $h_1: \R_{>0} \times D \times D \to \lambda D$ by declaring $h_1(u,d,e)$ to be the $\prec'$-minimal $x\in \lambda D$ such that
\begin{align*}
x\in \big(e,e+u\big) \hbox{ and } D_{\preceq d} \cap \big(e,x\big] = \emptyset.
\end{align*}
Let $h_2 : \R_{>0} \times D \times D \to \R$ be the function given by
\[
h_2(u,d,e) := h_1(u,d,e)-e.
\]
For fixed $d \in D$ and $u \in \R$, it follows from $\lambda \notin \Q(D)$ that the function $e \mapsto h_2(u,d,e)$ is injective on $D_{\preceq d}$.
Then let $g : \R^3 \times D \to D$ be defined such that if $b>a$, $g(c,a,b,d)$ is the $e \in D_{\preceq d}$ such that
$|(c-a)- h_2((b-a),d,e)|$ is minimal, and $g(c,a,b,d)=1$, if $b\leq a$.  It can be deduced from the injectivity of $h_2(u,d,-)$ that the function $g$ satisfies condition (ii) of Proposition \ref{prop:htplus}.
\end{proof}

\noindent As pointed out in the introduction, it is known that the expansion of $(\R,<,+)$ by the classical middle-thirds Cantor set avoids a compact set.
Thus the corollary below implies that this structure defines multiplication by $\lambda \in \R$ if and only if $\lambda$ is rational.

\begin{corollary}
Let $C \subseteq \R$ be the classical middle thirds Cantor set.
Let $\lambda \in \R$ be irrational.
Then $(\R,<,+, C, x \mapsto \lambda x)$ defines all compact sets.
\end{corollary}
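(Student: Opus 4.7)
The plan is to derive the corollary as an immediate consequence of statement (2) of the preceding theorem, applied to the structure $\Cal R = (\R,<,+,C,x\mapsto\lambda x)$. To do so, I need to produce a set $D$ definable in $\Cal R$ that is (i) dense in some open interval, (ii) admits a definable $\omega$-ordering, and (iii) satisfies $\lambda\notin\Q(D)$. Since $\lambda$ is already irrational, (iii) will be automatic once $D$ is exhibited as a subset of $\Q$.

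First, I would define $E \subseteq C$ to be the set of endpoints of connected components of $[0,1]\sm C$, which is $(\R,<,+,C)$-definable by the condition: $c\in C$ and there exists $\epsilon>0$ with either $(c,c+\epsilon)\cap C=\emptyset$ or $(c-\epsilon,c)\cap C=\emptyset$. The set $E$ is countably infinite, contained in $\Z[1/3]\subseteq\Q$, and dense in $C$. Next, I would equip $E$ with a definable $\omega$-ordering. Let $f : E \to \R_{>0}$ assign to each $c\in E$ the length of the unique component of $[0,1]\sm C$ of which $c$ is an endpoint; then $f$ is definable in $(\R,<,+,C)$, takes values in $\{3^{-n}:n\geq 1\}$, and each fiber is finite. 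The lexicographic order $c_1\prec c_2 \iff f(c_1)>f(c_2)\text{ or }\bigl(f(c_1)=f(c_2)\text{ and }c_1<c_2\bigr)$ is then a definable well-ordering of order type $\omega$.

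The key geometric input is the classical identity $C+C=[0,2]$ for the middle-thirds Cantor set. Combined with the density of $E$ in $C$, this gives that $D:=E+E=\{e_1+e_2:e_1,e_2\in E\}$ is a dense subset of $[0,2]$, and $D$ is clearly definable in $(\R,<,+,C)$. By Fact~\ref{fact:basicomega}, applied to the projection $E\times E\to D$ via addition, $D$ is $\omega$-orderable. Moreover $D\subseteq\Q$, so $\Q(D)=\Q$, and since $\lambda$ is irrational we have $\lambda\notin\Q(D)$. Thus all hypotheses of statement (2) of the preceding theorem are met, and we conclude that $\Cal R$ defines every compact set.

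There is no serious obstacle here: the only real content is the observation that although the natural definable dense subset $E$ lives inside the nowhere dense set $C$, the sumset trick $C+C=[0,2]$ promotes it to a definable dense subset of a genuine open interval, after which the preceding theorem does all the work.
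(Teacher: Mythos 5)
Your proposal is correct and follows essentially the same route as the paper: definable endpoints of complementary intervals, the $\omega$-ordering by interval length broken by $<$, the sumset identity ($C+C=[0,2]$ where the paper uses $C-C=[-1,1]$) to promote density in $C$ to density in an interval, and then statement (2) of the preceding theorem with $\Q(D)=\Q$. The only cosmetic wrinkle is that your $E$ includes $0$ and $1$, where $f$ as you describe it is undefined; excluding these two points (or working with right endpoints only, as the paper does) fixes this trivially.
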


\begin{proof}
By Theorem C it is enough to show that $(\R,<,+,C)$ defines a dense $\omega$-orderable set of rational numbers.
The complement of $C$ decomposes as a countable disjoint union of open intervals, the complementary intervals of $C$.
Let $E$ be the set of right endpoints of complementary intervals of $C$.
It follows from the construction of $C$ that $E \subseteq \mathbb{Q}$.
For each $e \in E$, let $\delta(e) \in \R \cup \{\infty\}$ be the length of the complementary interval with right endpoint $e$.
We put an ordering $\prec$ on $E$ by declaring $e \prec d$ if $\delta(e) > \delta(d)$ or if $\delta(e) = \delta(d)$ and $e < d$.
It was shown in \cite[Proof of Theorem B]{HW-Monadic} that such an ordering has order type $\omega$.
As $E$ is $\omega$-orderable it follows from Fact~\ref{fact:basicomega} that  $E - E := \{ e - e' : e, e' \in E \}$ is $\omega$-orderable.
It is well-known that $C - C=[-1,1]$. As $E$ is dense in $C$ we see that $E - E$ is dense in $[-1,1]$.
\end{proof}

\section{The Strong Baire Category Theorem}

In this section $\Cal R$ is an expansion of $(\R,<,+)$ that does not define a dense $\omega$-orderable set. We prove a key result, the strong Baire category theorem (or \textbf{SBCT} for short).
Our proof closely follows an argument of \cite{MS99}.

\begin{thm}\label{SBCT}
Let $A \subseteq \R^{n}$ be $\DSig$. Then $A$ either has interior or is nowhere dense.
Moreover, if $W \subseteq \R^{n}$ is open and $A$ is dense in $W$, then the interior of $A$ is dense in $W$.
\end{thm}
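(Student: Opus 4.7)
The plan is to prove both clauses of Theorem~\ref{SBCT} by induction on $n$, with the base case $n=1$ supplied by Corollary~\ref{oldunary}. The ``moreover'' clause follows from the first clause by a routine local argument: for any open sub-box $W' \subseteq W$ with definable endpoints, $A \cap W'$ is $\DSig$ (open definable sets are $\DSig$ by Fact~\ref{basicDsigma}(4), and $\DSig$ is closed under finite intersections) and dense in $W'$, hence by the first clause has nonempty interior, so $\Int(A)$ meets every open sub-box in a basis of $W$. Thus it suffices to show, assuming both clauses in dimensions $<n$, that a $\DSig$ set $A \subseteq \R^n$ with empty interior is nowhere dense.

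Suppose for contradiction such an $A$ is dense in an open box $U \times I$ with $U \subseteq \R^{n-1}$ open and $I \subseteq \R$ an open interval, and write $A_y := \{x \in \R : (y,x) \in A\}$, a $\DSig$ subset of $\R$ by Fact~\ref{basicDsigma}(3). The strategy is to produce two comeager subsets of $U$: one of $y$'s whose fiber $A_y$ is dense in $I$, and one of $y$'s whose fiber $A_y$ is nowhere dense in $\R$. Their intersection is then comeager, hence nonempty, and any common $y_0$ would yield an $A_{y_0}$ simultaneously dense in $I$ and nowhere dense in $\R$---a contradiction.

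For the dense-fiber comeager set, fix a countable basis $\{J_k\}_{k\in\N}$ of open subintervals of $I$ with definable endpoints and let $B_k := \pi(A \cap (U \times J_k))$, where $\pi : \R^n \to \R^{n-1}$ is projection onto the first $n-1$ coordinates. Each $B_k$ is $\DSig$ by Fact~\ref{basicDsigma}, and is dense in $U$ because $A$ is dense in $U \times J_k$; by the inductive hypothesis (the moreover clause in dimension $n-1$) the set $V_k := \Int(B_k) \cap U$ is open and dense in $U$, so $D_1 := \bigcap_k V_k$ is comeager in the Baire space $U$, and every $y \in D_1$ has $A_y$ meeting each $J_k$, so $A_y$ is dense in $I$. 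For the nowhere-dense-fiber comeager set, Fact~\ref{KU} applied to the $F_\sigma$ set $A$ with empty interior yields that $E := \{y : A_y \text{ has interior in } \R\}$ is meager in $\R^{n-1}$; Fact~\ref{fact:dimdsig} gives that $E$ is $\DSig$, so the inductive hypothesis (first clause in dimension $n-1$) forces $E$ to be nowhere dense. Hence $D_2 := U \setminus E$ is comeager in $U$, and for $y \in D_2$ the $\DSig$ set $A_y \subseteq \R$ has empty interior, so by Corollary~\ref{oldunary} it is nowhere dense in $\R$.

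The main point to verify carefully is the $\DSig$-ness of the auxiliary sets $B_k$ and $E$, so that the inductive hypothesis applies in dimension $n-1$; once this is in place, the tameness assumption on $\Cal R$ enters only through its one-dimensional consequence (Corollary~\ref{oldunary}), and the rest is Baire category in the Polish open set $U$.
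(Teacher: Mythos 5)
Your proof is correct and follows essentially the same route as the paper's: induction on $n$ with Corollary~\ref{oldunary} as the base case, projecting $A\cap(U\times J_k)$ over a countable basis of subintervals and invoking the inductive hypothesis to get comeager many fibers $A_y$ dense in $I$, and Fact~\ref{KU} to pass between $A$ and its fibers. The only cosmetic difference is that you run the endgame as a contradiction between two comeager sets ($D_1$ and $D_2$), whereas the paper applies Fact~\ref{KU} in the forward direction — once comeager many fibers are dense in $I$, the one-dimensional case gives them interior, and Fact~\ref{KU} immediately yields that $A$ has interior.
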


\begin{proof}
We proceed by induction on $n$. Observe that for each $n$, the second statement follows from the first statement, as $\cl(A)= \Int(A) \cup [\cl(A) \setminus \Int (A)]$.  The base case $n = 1$ follows from Fact~\ref{oldunary}. We suppose that $n \geq 2$ and that $A\cap U$ is dense in $U$ for some open set $U$. We now show that $A \cap U$ has interior. After shrinking $U$ if necessary we suppose that $U = V \times I$ where $V \subseteq \R^{n - 1}$ is definable and open and $I \subseteq \R$ is an open interval in $\R$. Observe that $A \cap U$ is $D_\Sigma$. Therefore, after replacing $A$ with $A \cap U$, we may assume that $A \subseteq U$.  Let $\pi : \R^n \to \R^{n - 1}$ be the projection onto the first $n - 1$ coordinates.
We show that
$$ B := \{ x \in V : A_x \text{ is dense in } I \} $$
is a comeager subset of $V$.
For this purpose it is enough to fix an open subinterval $J \subseteq I$ with rational endpoints and show that
$$ C : = \{ x \in V : A_x \cap J \neq \emptyset \} $$
contains an open dense subset of $V$.
Note that $C = \pi\big( A \cap (V \times J) \big)$. In particular, $C$ is $\DSig$.
As $A \cap (V \times J)$ is dense in $V \times J$, $C$ is dense in $V$.
The inductive assumption implies that $C$ contains an open dense subset of $V$.
Thus $B$ is comeager in $U$.\newline
We finish the proof that $A$ has non-empty interior. Each $A_x$ is $D_\Sigma$, so $A_x$ contains a dense open subset of $I$ whenever $x\in B$.
Thus $A_x$ has interior for comeager many $x \in V$. By Fact~\ref{KU}, $A$ has interior.
\end{proof}

\noindent We now prove the Lebesgue measure version of strong Baire category.

\begin{prop}\label{lmeasure}
Let $A\subseteq \R^n$ be $\DSig$.
Then $A$ either has interior or is Lebesgue null.
\end{prop}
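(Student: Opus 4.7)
My plan is to prove the statement by induction on $n$, noting that by the Strong Baire Category Theorem (Theorem \ref{SBCT}) it suffices, in the inductive step, to show that every nowhere dense $\DSig$ subset of $\R^n$ is Lebesgue null, since the case ``$A$ has interior'' is the other horn of the dichotomy. The base case $n=1$ is exactly Corollary \ref{oldunary}.

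For the inductive step, fix a nowhere dense $\DSig$ set $A \subseteq \R^{n-1}\times \R$ and consider
\[
B := \{x \in \R^{n-1} : A_x \text{ has interior in } \R\}.
\]
By Fact \ref{fact:dimdsig}(1), $B$ is $\DSig$. Because $A$ is $\FSig$ and has empty interior, the strengthened Kuratowski--Ulam statement (Fact \ref{KU}) forces $B$ to be meager in $\R^{n-1}$; in particular $B$ has empty interior, so Theorem \ref{SBCT} upgrades this to $B$ being nowhere dense. The inductive hypothesis then gives that $B$ has $(n-1)$-dimensional Lebesgue measure zero.

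For any $x \notin B$, the fiber $A_x$ is $\DSig$ by Fact \ref{basicDsigma}(3) and lacks interior in $\R$, hence is Lebesgue null in $\R$ by Corollary \ref{oldunary}. Since $A$ is $\FSig$, it is Borel, hence Lebesgue measurable, so Fubini's theorem yields that the $n$-dimensional Lebesgue measure of $A$ equals the integral over $\R^{n-1}$ of the measures of the fibers $A_x$, and this integral vanishes because its integrand is zero off the null set $B$.

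I do not anticipate a significant obstacle: the argument is structurally parallel to the proof of Theorem \ref{SBCT} itself, with ``Lebesgue null'' taking the place of ``nowhere dense'' in the conclusion and with Corollary \ref{oldunary} as the crucial $n=1$ input that simultaneously handles both notions. The one subtle point is recognizing that Kuratowski--Ulam is exactly the bridge from the topological hypothesis (no interior in $\R^n$) to the fiberwise hypothesis (almost every $A_x$ has no interior in $\R$), and that Theorem \ref{SBCT} is needed once to promote the meager $\DSig$ set $B$ to a nowhere dense one so that induction applies.
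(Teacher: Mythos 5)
Your proof is correct and is essentially the paper's argument run in the contrapositive direction: the paper assumes $A$ has positive measure and uses Fubini, the base case, the $\DSig$-ness of $\{x : A_x \text{ has interior}\}$, the inductive hypothesis, and Fact~\ref{KU} (in that order) to conclude $A$ has interior, whereas you assume $A$ has no interior and deploy the same ingredients in reverse. The extra appeal to Theorem~\ref{SBCT} to upgrade ``meager'' to ``nowhere dense'' is harmless but unnecessary, since having empty interior already suffices to invoke the inductive hypothesis on $B$.
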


\begin{proof}
Suppose that $A$ has positive $n$-dimensional Lebesgue measure.
We apply induction to $n$.
The base case $n = 1$ follows by Fact~\ref{oldunary}.
Suppose $n \geq 2$.
Let $\pi : \mathbb{R}^n \to \mathbb{R}^{n - 1}$ be the projection onto the first $n - 1$ coordinates.
Let $B$ be the set of $x \in \pi(A)$ such that $A_x$ has positive one-dimensional Lebesgue measure.
By Fubini's theorem, $B$ has positive $(n - 1)$-dimensional Lebesgue measure.
It follows from the base case that $A_x$ has interior for every $x \in B$.
Fact~\ref{fact:dimdsig} therefore implies that $B$ is $\DSig$. By the inductive hypothesis $B$ has nonempty interior.
It follows from Fact~\ref{KU} that $A$ has nonempty interior.
\end{proof}

Theorem D now follows immediately from SBCT and Proposition \ref{lmeasure}.

\section{Dimension Coincidence}
In this section $\mathcal R$ is an expansion of $(\R,<,+)$ that does not define a dense $\omega$-orderable set.
We first show that naive and topological dimension agree on $\DSig$ sets.
This is a consequence of Proposition~\ref{prop:raisedim} which states that a continuous definable map between $\DSig$ sets cannot raise topological dimension.
We first prove a definable selection theorem for $\DSig$ sets, Proposition~\ref{select} below.
Let $X \subseteq \R^n$, $f: X \to \R^m$, and $\epsilon > 0$.
We say that $f$ has \textbf{$\epsilon$-oscillation} at $x \in X$ if for all $\delta > 0$ there are $y,y' \in X$ such that $\| x - y \|,  \| x - y' \| < \delta$ and $\| f(y) - f(y') \| \geq \epsilon$.
The following fact is an easy analysis exercise.

\begin{fact}\label{basic metric}
For each $\epsilon > 0$ the set of $x \in X$ at which $f$ has $\epsilon$-oscillation is closed in $X$.
Furthermore, $f$ is discontinuous at $x \in X$ if and only if $f$ has $\epsilon$-oscillation at $x$ for some $\epsilon > 0$.
\end{fact}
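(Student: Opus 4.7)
The plan is to treat the two assertions separately; both are standard $\epsilon$--$\delta$ arguments. I will write $E_\epsilon \subseteq X$ for the set of points at which $f$ has $\epsilon$-oscillation.

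For the closedness statement, I would show that the complement $X \setminus E_\epsilon$ is open in $X$. If $x \in X \setminus E_\epsilon$, then by definition there is $\delta > 0$ such that $\|f(y) - f(y')\| < \epsilon$ for all $y, y' \in X$ with $\|x - y\|, \|x - y'\| < \delta$. I claim the open ball of radius $\delta/2$ around $x$ (intersected with $X$) lies in $X \setminus E_\epsilon$: for any $x'$ in this ball and any $y, y' \in X$ with $\|x' - y\|, \|x' - y'\| < \delta/2$, the triangle inequality gives $\|x - y\|, \|x - y'\| < \delta$, so $\|f(y) - f(y')\| < \epsilon$. Hence $x'$ has no $\epsilon$-oscillation, proving $E_\epsilon$ closed.

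For the equivalence, one direction is essentially trivial: if $f$ has $\epsilon$-oscillation at $x$, then for each $\delta > 0$ there are $y, y' \in X$ within $\delta$ of $x$ with $\|f(y) - f(y')\| \geq \epsilon$, so at least one of $\|f(x) - f(y)\|$ or $\|f(x) - f(y')\|$ is $\geq \epsilon/2$ by the triangle inequality, which precludes continuity at $x$. Conversely, if $f$ is discontinuous at $x$, there is some $\epsilon_0 > 0$ such that for every $\delta > 0$ one can find $y \in X$ with $\|x - y\| < \delta$ and $\|f(x) - f(y)\| \geq \epsilon_0$; taking $y' := x$ (which lies in $X$) witnesses $\epsilon_0$-oscillation at $x$.

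Neither step poses a genuine obstacle; the only mild care needed is using $x$ itself as one of the two witness points in the converse direction of the second claim, which is legitimate since $x \in X$ by hypothesis.
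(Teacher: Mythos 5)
Your proof is correct: the complement-is-open argument via the triangle inequality for closedness, and the use of $y' = x$ as a witness point in the converse direction, are exactly the standard $\epsilon$--$\delta$ arguments needed. The paper gives no proof of this fact (it is stated as ``an easy analysis exercise''), so there is no alternative approach to compare against; your write-up supplies the intended routine verification.
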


\begin{lem}\label{oscil}
Let $U \subseteq \R^m$ be open and definable and $f: U \to \R^n$ be definable.
Then the set of points at which $f$ is discontinuous is $\DSig$.
Furthermore one of the following holds:
\begin{enumerate}
\item There is a definable open $V \subseteq U$ such that the restriction of $f$ to $V$ is continuous.
\item There is a definable open $V \subseteq U$ and an $\epsilon > 0$ such that $f$ has $\epsilon$-oscillation at every $p \in V$.
\end{enumerate}
\end{lem}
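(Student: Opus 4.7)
The plan is to exhibit the discontinuity set $D$ of $f$ as a $\DSig$ subset of $\R^m$, then invoke SBCT (Theorem~\ref{SBCT}) to obtain the dichotomy.

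For each $\epsilon > 0$ let $D_\epsilon \subseteq U$ be the set of points at which $f$ has $\epsilon$-oscillation. The condition defining $\epsilon$-oscillation is first-order over $\Cal R$, so $\{D_\epsilon : \epsilon > 0\}$ is a definable family, and Fact~\ref{basic metric} tells us that each $D_\epsilon$ is closed in $U$ and that $D = \bigcup_{\epsilon > 0} D_\epsilon$. To see that $D$ is $\DSig$, I would fix a definable increasing exhaustion $\{K_r : r > 0\}$ of $U$ by compact subsets---for instance, take $K_r$ to be the closure of the definable open set $\{x \in U : \|x\| < r \text{ and } \|x - y\| > 1/r \text{ for every } y \notin U\}$, which is contained in $U$ by continuity of the distance function---and set $X_{r,s} := D_s \cap K_r$. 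Each $X_{r,s}$ is closed in the compact set $K_r$ and hence compact; the inclusion $X_{r,s} \subseteq X_{r',s'}$ when $r \leq r'$ and $s \geq s'$ follows from $D_s \subseteq D_{s'}$ (larger threshold, fewer points) together with $K_r \subseteq K_{r'}$; and $\bigcup_{r,s} X_{r,s} = D$.

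By Theorem~\ref{SBCT}, the $\DSig$ set $D$ either has interior or is nowhere dense. If $D$ is nowhere dense, then $V := U \setminus \cl(D)$ is definable, open, and disjoint from $D$, so $f|_V$ is continuous, giving conclusion (1). If instead $D$ has interior, set $V_0 := \Int(D)$, a non-empty definable open subset of $U$ on which $f$ is discontinuous everywhere, and write $V_0 = \bigcup_{n \geq 1}(V_0 \cap D_{1/n})$ as a countable union of subsets closed in $V_0$. The classical Baire Category Theorem in the Baire space $V_0$ produces an $n$ for which $V_0 \cap D_{1/n}$ has non-empty interior in $V_0$, and hence in $\R^m$; taking $V := \Int(V_0 \cap D_{1/n})$ and $\epsilon := 1/n$ gives conclusion (2).

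The delicate step is the $\DSig$-ness of $D$: the sets $D_\epsilon$ are only relatively closed in $U$ rather than compact in $\R^m$, so a compact exhaustion of $U$ is needed both to enforce compactness of the $X_{r,s}$ and to arrange the two parameters so that inclusions run in the direction required by the $\DSig$ convention. Once this is in place, SBCT does the real work, and the remainder is a routine case split, with classical Baire category extracting a uniform oscillation threshold on a definable open set in the second case.
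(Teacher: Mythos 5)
Your proof is correct and follows essentially the same route as the paper: the paper likewise shows each oscillation set $A_\epsilon$ is $\DSig$ (there by citing that it is a boolean combination of closed sets and that decreasing definable unions of $\DSig$ sets are $\DSig$, where you instead build an explicit witnessing family from a compact exhaustion of $U$) and then applies SBCT plus Baire category to get the dichotomy. The only cosmetic slip is calling the set whose closure defines $K_r$ ``open'' --- the condition $\|x-y\|>1/r$ for all $y\notin U$ is a closed condition --- but this is immaterial since all you need is that $K_r$ is compact, contained in $U$, increasing in $r$, and exhausts $U$.
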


\begin{proof}
For each $\epsilon > 0$ let $A_\epsilon \subseteq U$ be the set of points at which $f$ has $\epsilon$-oscillation.
Let $A_0$ be the union of the $A_\epsilon$.
By Fact~\ref{basic metric}, $A_0$ is the set of points at which $f$ is discontinuous, and each $A_\epsilon$ is closed in $U$. Thus each $A_\epsilon$ is an intersection of an open set and a closed set.
It follows from Fact~\ref{basicDsigma}(3) that each $A_\epsilon$ is $\DSig$.
If $0 < s < t$ then $A_t \subseteq A_s$. Hence $A_0$ is $\DSig$ by Fact~\ref{basicDsigma}(4).
Applying SBCT we see that $A_0$ is either nowhere dense or $A_\epsilon$ has interior for some $\epsilon > 0$.
If $A_0$ is nowhere dense, then $(1)$ holds.
If $A_\epsilon$ has interior, then $(2)$ holds for this $\epsilon$.
\end{proof}

Let $U \subseteq \R^n$ be open and $f : U \to \R$.
We say $f$ is \textbf{Baire class one} if it is a pointwise limit of a sequence of continuous functions $g : U \to \R$. We say $f$ is \textbf{lower semi-continuous} if for any $p \in U$ and sequence $\{q_i\}_{i = 0}^{\infty}$ of elements of $U$ converging to $p$ we have
$$ \liminf_{i \to \infty} f(q_i) \geq f(p). $$
It is well-known, and not difficult to show, that a lower semi-continuous function is Baire class one.
It is also well-known that a Baire class one function $f : U \to \R$ is continuous on a comeager subset of $U$, see~\cite{kechris}.
In particular, if $f$ is Baire class one, then the set of points at which $f$ is continuous is dense in $U$.
Lemma~\ref{oscil} therefore implies the following.

\begin{cor}\label{cor:baireclassone}
Let $U \subseteq \R^n$ be open and definable and let $f : U \to \R$ be definable.
If $f$ is Baire class one, then $f$ is continuous on an open dense subset of $U$.
In particular, if $f$ is lower semi-continuous, then $f$ is continuous on an open dense subset of $U$.
\end{cor}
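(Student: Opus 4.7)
The plan is to apply Lemma \ref{oscil} to arbitrary nonempty definable open subsets of $U$ and show that the second alternative is always ruled out by the Baire class one hypothesis.

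First, I would invoke the classical descriptive set theoretic fact already recalled in the paragraph preceding the corollary: every Baire class one $f : U \to \R$ is continuous on a comeager subset of $U$. In particular $f$ is continuous at some point of every nonempty open $V \subseteq U$.

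Second, I would let $W \subseteq U$ denote the interior of the set of continuity points of $f$; since $W$ is open by construction, proving the corollary reduces to showing $W$ is dense in $U$. To that end I would fix an arbitrary nonempty definable open $V \subseteq U$ (definable open subsets of $U$ form a base for its topology, e.g.\ open boxes with rational coordinates intersected with $U$) and apply Lemma \ref{oscil} to $f|_V$. Alternative (2) of the lemma would yield a definable open $V' \subseteq V$ and some $\epsilon > 0$ so that $f$ has $\epsilon$-oscillation, and hence is discontinuous, at every point of $V'$; this contradicts the classical fact of the previous paragraph. Hence alternative (1) must hold, producing a definable open $V'' \subseteq V$ on which $f|_{V''}$ is continuous, so $V'' \subseteq W$. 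Since $V$ was arbitrary this gives $W \cap V \neq \emptyset$, proving density.

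The only subtlety — if one can call it that — is getting \emph{open} density rather than merely the existence of one open set of continuity: this is why Lemma \ref{oscil} must be applied afresh inside each definable open $V$, rather than once globally to $U$ itself. The ``in particular'' clause of the corollary is then immediate from the remark, recorded just before the corollary, that lower semi-continuous functions are Baire class one.
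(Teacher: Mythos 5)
Your proof is correct and follows the paper's intended route: the paper derives the corollary directly from Lemma~\ref{oscil} together with the classical fact that Baire class one functions are continuous on a comeager set, exactly as you do. Your care in applying Lemma~\ref{oscil} afresh inside each nonempty definable open $V \subseteq U$ to upgrade ``continuous on some open set'' to ``continuous on an open dense set'' is precisely the point that makes the deduction go through.
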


\noindent We recall the following elementary fact from real analysis. We leave the proof to the reader.

\begin{fact}\label{lem:closedsemicontinuous}
Let $A \subseteq \R^{m + 1}$ be compact and $U\subseteq \R^m$ be a nonempty, open and contained in the projection of $A$ onto the first $m$ coordinates.
Then the function $f: U \to \R$ that maps $p \in U$ to the minimal element of $A_p \subseteq \R$ is lower semi-continuous.
\end{fact}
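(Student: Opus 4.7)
The plan is to argue by contradiction using the sequential characterization of lower semi-continuity together with the compactness of $A$. Fix $p \in U$ and a sequence $\{q_i\}_{i=0}^\infty$ in $U$ with $q_i \to p$. Since $p \in U \subseteq \pi(A)$ and $A$ is closed, the fiber $A_p$ is a nonempty closed bounded subset of $\R$, so $f(p) = \min A_p$ is well defined; the same reasoning shows $f(q_i)$ is defined for all sufficiently large $i$ (and in fact, since we only care about the $\liminf$, we may discard finitely many terms).

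Suppose, toward contradiction, that $\liminf_{i \to \infty} f(q_i) < f(p)$. Then by definition of $\liminf$ I can pass to a subsequence $\{q_{i_k}\}$ such that the limit $L := \lim_{k \to \infty} f(q_{i_k})$ exists and satisfies $L < f(p)$. Here I am using that the values $f(q_{i_k})$ are bounded: because $A$ is compact, it is contained in some ball, and hence $f(q_i) \in \pi_{m+1}(A)$ lies in a fixed bounded interval for every $i$, so Bolzano--Weierstrass supplies a convergent subsequence.

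For each $k$, by construction $(q_{i_k}, f(q_{i_k})) \in A$. Since $q_{i_k} \to p$ and $f(q_{i_k}) \to L$, the sequence of pairs converges to $(p, L)$, and closedness of $A$ forces $(p, L) \in A$, i.e.\ $L \in A_p$. But then $f(p) = \min A_p \leq L < f(p)$, a contradiction. Hence $\liminf_{i \to \infty} f(q_i) \geq f(p)$, which is exactly the lower semi-continuity of $f$ at $p$.

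There is no real obstacle here; the only point to be mindful of is extracting a convergent subsequence of $\{f(q_{i_k})\}$, which is immediate once one observes that compactness of $A$ bounds the fiber values uniformly. No definability hypotheses are needed, and the argument is the standard one showing that the pointwise infimum of the fibers of a compact set is lower semi-continuous on the interior of its projection.
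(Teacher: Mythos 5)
Your proof is correct: the paper explicitly leaves this fact to the reader, and your argument (extract a convergent subsequence of the fiber minima using boundedness of $A$, then use closedness of $A$ to place the limit in $A_p$) is the standard one the authors intend. The only cosmetic point is that $f(q_i)$ is in fact defined for \emph{all} $i$, since every $q_i$ lies in $U\subseteq \pi(A)$, so no terms need to be discarded.
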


\begin{prop}[$\DSig$-choice]\label{select}
Let $A \subseteq \R^{m + n}$ be $\DSig$ and let $\pi : \R^{m + n} \to \R^m$ be the projection onto the first $m$ coordinates.
If $\pi(A)$ has interior, then there is a definable open set $V \subseteq \pi(A)$ and a definable continuous function $f: V \to \R^n$ such that $\gr(f)\subseteq A$.
\end{prop}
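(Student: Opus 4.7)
The plan is to induct on $n$. In the base case $n=1$ I would combine the Baire category corollary to Theorem D with Fact~\ref{lem:closedsemicontinuous} and Corollary~\ref{cor:baireclassone} to extract a lower semi-continuous, and hence generically continuous, section. In the inductive step I would apply the inductive hypothesis to the projection $\pi_1(A)$ of $A$ onto the first $n-1$ fibre coordinates, obtaining a continuous section $g$, and then pull $A$ back along the graph of $g$ to reduce to the base case.

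For $n=1$, fix a family $\{X_{r,s}\}_{r,s>0}$ witnessing that $A$ is $D_\Sigma$, so each $X_{r,s}$ is compact. The images $\pi(X_{r,s})$ form a directed definable family of compact, hence closed, subsets of $\R^m$ whose union $\pi(A)$ has nonempty interior and is in particular somewhere dense. The definable Baire category corollary to Theorem D supplies $r_0,s_0>0$ with $U := \Int \pi(X_{r_0,s_0}) \neq \emptyset$. For each $p \in U$ the fibre $(X_{r_0,s_0})_p$ is a nonempty compact subset of $\R$, so $f_0(p) := \min (X_{r_0,s_0})_p$ is well-defined and definable. Fact~\ref{lem:closedsemicontinuous} shows that $f_0$ is lower semi-continuous, whence Corollary~\ref{cor:baireclassone} produces an open dense definable $V \subseteq U$ on which $f_0$ is continuous; then $V \subseteq \pi(A)$ and $f := f_0|_V$ form the required selection, with $\gr(f) \subseteq X_{r_0,s_0} \subseteq A$.

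For the inductive step $n\geq 2$, let $\pi_1 : \R^{m+n} \to \R^{m+n-1}$ drop the last coordinate. Then $\pi_1(A)$ is $D_\Sigma$ by Fact~\ref{basicDsigma}(1), and its projection to the first $m$ coordinates coincides with $\pi(A)$, which has interior. The inductive hypothesis (with target dimension $n-1$) yields an open definable $V_0 \subseteq \pi(A)$ and a continuous definable $g : V_0 \to \R^{n-1}$ with $\gr(g) \subseteq \pi_1(A)$. Fix a definable compact $C_0 \subseteq V_0$ with nonempty interior (say a closed box centred at a rational point of $V_0$) and set
\[
A''' := \bigl\{(x,z) \in C_0 \times \R : (x,g(x),z) \in A\bigr\}.
\]
I would then show that $A'''$ is $D_\Sigma$: the definable family $Y_{r,s} := \{(x,z) \in C_0 \times \R : (x,g(x),z) \in X_{r,s}\}$ is closed in $\R^{m+1}$ (continuity of $g$ and closedness of $C_0$ and $X_{r,s}$) and bounded in the $z$-direction (compactness of $X_{r,s}$), so each $Y_{r,s}$ is compact; the family is directed in $(r,s)$ and its union is $A'''$. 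Since $\gr(g) \subseteq \pi_1(A)$, every $x \in C_0$ admits some $z$ with $(x,g(x),z) \in A$, so $\pi(A''') = C_0$ has interior. Applying the base case to $A''' \subseteq \R^{m+1}$ produces an open definable $V \subseteq \Int C_0 \subseteq \pi(A)$ and a continuous definable $\tilde f : V \to \R$ with $(x,g(x),\tilde f(x)) \in A$ for every $x \in V$; setting $f(x) := (g(x),\tilde f(x))$ completes the construction.

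The main obstacle is ensuring $D_\Sigma$-ness in the inductive step: the truncation to the compact sub-domain $C_0 \subseteq V_0$ is essential, because otherwise the pullback sets $Y_{r,s}$ are merely closed in $V_0 \times \R$ and the $z$-direction can fail to be bounded near the boundary of $V_0$, so $A'''$ need not be presentable as a directed union of compact sets. Once this truncation is secured the rest of the inductive step is a formal pullback, and all the real analytic content of the proof is concentrated in the semi-continuity plus Baire-class-one argument of the base case.
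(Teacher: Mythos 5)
Your proof is correct and follows essentially the same route as the paper's: reduce to a compact set via Baire category, select the fibrewise minimum, use lower semi-continuity together with the Baire-class-one corollary in the base case, and handle $n\geq 2$ by peeling off the last coordinate and pulling $A$ back along the graph of a partial section obtained from the inductive hypothesis. The only difference is organizational: the paper performs the compactness reduction once at the outset and then shrinks $U$ so that $g$ is continuous on $\cl(U)$ to make the pullback compact, whereas you induct on the proposition itself and secure compactness of the pullback by truncating to the closed box $C_0$ --- two versions of the same maneuver.
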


\begin{proof}
Suppose that $\pi(A)$ has interior.
We first reduce to the case when $A$ is compact.
Let $\{ A_{s,t} : s,t > 0 \}$ be a definable family of compact subsets of $\R^{m + n}$ that witnesses that $A$ is $\DSig$.
Consider the definable family $\{ \pi(A_{s,t}) : s,t > 0\}$ of compact subsets of $\R^m$.
The union of the $\pi(A_{s,t})$ is equal to $\pi(A)$ and thus has interior.
By the Baire Category Theorem there are $s,t > 0$ such that $\pi(A_{s,t})$ has interior.
Fix such $s,t$.
It now suffices to show that there is a definable open $V \subseteq \pi(A_{s,t})$ and a definable continuous $f: V \to \R^n$ such that $\gr(f) \subseteq A_{s,t}$.
After replacing $A$ with this $A_{s,t}$, if necessary, we may assume that $A$ is compact.
Let $<_{lex}$ be the lexicographic order on $\R^n$.
Let $U$ be a definable open subset of $\pi(A)$.
If $p \in U$, then $A_p \subseteq \R^n$ is compact and nonempty and thus has a $<_{lex}$-minimal element.
Let $f: U \to \R^n$ be the definable function that maps $p \in U$ to the $<_{lex}$-minimal element of $A_p$.
We show, by applying induction to $n$, that $f$ is continuous on a nonempty open subset of $U$.
If $n = 1$, then $f$ is lower semi-continuous by Fact~\ref{lem:closedsemicontinuous}. Therefore $f$ is continuous on a nonempty open subset of $U$ by Corollary~\ref{cor:baireclassone}.
Suppose $n \geq 2$.
Let $\rho : \R^{m + n} \to \R^{m + n - 1}$ be the coordinate projection missing the last coordinate.
Let $B = \rho(A)$. Note that $B$ is compact.
Let $g : U \to \R^{n - 1}$ be the function that maps $p$ to the $<_{lex}$-minimal element of $B_p \subseteq \R^{n - 1}$.
Note that $g(p) = \rho(f(p))$ for all $p \in U$.
It follows from the inductive hypothesis that $g$ is continuous on a nonempty open subset of $U$.
After replacing $U$ with a smaller nonempty open set, we may suppose that $g$ is continuous on the closure of $U$.
Let $C \subseteq \cl(U) \times \R$ be the set of $(p,t)$ such that $(p,g(p),t) \in A$.
It is easy to see that $C$ is compact.
Let $h : U \to \R$ be given by declaring $h(p)$ to be the minimal $t \in \R$ such that $(p,t) \in C$.
It follows from the base case that $h$ is continuous on a nonempty open $V \subseteq U$.
Since $f(p) = (g(p),h(p))$ for every $p \in U$, we get that $f$ is continuous on $V$.
\end{proof}

\noindent Together with N\"obeling's projection theorem for topological dimension (see Fact \ref{topdimmono}(4)) this yields the following.

\begin{prop}\label{prop:raisedim}
Let $A \subseteq \R^n$ be $\DSig$ and let $f : \R^n \to \R^m$ be continuous and definable.
Then $\dim_T f(A) \leq \dim_T(A)$.
\end{prop}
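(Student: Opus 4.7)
My plan is to reduce to the case where the image has interior via Nöbeling's projection theorem, then construct a continuous definable section via $\DSig$-choice. First, since $f$ is continuous and definable and $A$ is $\DSig$, Fact~\ref{basicDsigma}(1) gives that $f(A)$ is $\DSig$, hence $\FSig$. Setting $k := \dim_T f(A)$ (assume finite), Fact~\ref{topdimmono}(4) produces a coordinate projection $\pi : \R^m \to \R^k$ such that $\pi(f(A))$ has interior in $\R^k$. Writing $g = \pi \circ f$, it suffices to exhibit a subset of $A$ of topological dimension at least $k$.

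Next, I would form the set
\[
B := \{(g(x), x) : x \in A\} \subseteq \R^k \times \R^n.
\]
Since $B$ is the image of $A$ under the continuous definable map $x \mapsto (g(x), x)$, Fact~\ref{basicDsigma}(1) again gives that $B$ is $\DSig$. The projection of $B$ onto the first $k$ coordinates is exactly $g(A) = \pi(f(A))$, which has interior by construction.

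Now I would apply $\DSig$-choice (Proposition~\ref{select}) to $B$: there exist a definable nonempty open $V \subseteq g(A) \subseteq \R^k$ and a continuous definable map $h : V \to \R^n$ with $\gr(h) \subseteq B$. The containment $\gr(h) \subseteq B$ translates exactly to $h(y) \in A$ and $g(h(y)) = y$ for every $y \in V$, so $h$ is a continuous definable section of $g|_A$ over $V$. Finally, $g \circ h = \mathrm{id}_V$ forces $h$ to be injective, with continuous inverse $g|_{h(V)}$, so $h : V \to h(V)$ is a homeomorphism. Hence $\dim_T h(V) = \dim_T V = k$, and since $h(V) \subseteq A$, Fact~\ref{topdimmono}(2) yields $\dim_T A \geq k = \dim_T f(A)$.

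The main obstacle is the correct deployment of Proposition~\ref{select}: one must verify that $B$ really is $\DSig$ (not merely $\FSig$) and that its first-coordinate projection has interior, so that the selection theorem applies and delivers a section whose image inherits topological dimension $k$ from $V$. Once the section $h$ is in hand, the fact that it is a homeomorphism onto its image is automatic, and monotonicity of $\dim_T$ finishes the argument.
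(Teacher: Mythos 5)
Your proposal is correct and follows essentially the same route as the paper's proof: reduce via N\"obeling's projection theorem (Fact~\ref{topdimmono}(4)) to the case where the image has interior, apply $\DSig$-choice (Proposition~\ref{select}) to the reversed graph $\{(g(x),x) : x \in A\}$ to obtain a continuous definable section $h$, and conclude from the fact that $h$ is a homeomorphism onto its image inside $A$. The only cosmetic difference is that the paper passes through $\dim_T(G) \geq m$ and the homeomorphism $G \cong A$, whereas you go directly via $h(V) \subseteq A$; both endgames are equivalent.
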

\begin{proof}
Let $\dim_T f(A) = d$.
By Fact~\ref{topdimmono} there is a coordinate projection $\pi : \R^m \to \R^d$ such that the image of $f(A)$ under $\pi$ has interior.
After replacing $f$ with $\pi \circ f$, we may assume that $d = m$ and that $f(A)$ has interior.
Let $G \subseteq \R^{n + m}$ be the graph of the restriction of $f$ to $A$.
Then $G$ is the image of $A$ under the continuous definable map $p \mapsto(p,f(p))$ and is therefore $\DSig$.
Applying Proposition~\ref{select} to $G$ we obtain a nonempty definable open $V \subseteq f(A)$ and a continuous definable $h : V \to \R^n$ such that $(h(p),p) \in G$ for all $p \in V$.
Note that $f(h(p)) = p$ for all $p \in V$.
As $h$ is continuous, $\gr(h)$ is homeomorphic to $V$.
Therefore $\dim_T \gr(h) = m$.
Fact~\ref{topdimmono} yields $\dim_T(G) \geq m$.
Because $f$ is continuous, $G$ is homeomorphic to $A$. Thus $\dim_T(A) \geq m$.
\end{proof}

\noindent It follows immediately that if $f : \R^m \to \R^n$ is a continuous surjection and $m < n$, then $(\R,<,+,f)$ defines a dense $\omega$-orderable set.
 We now show that naive and topological dimension agree on $\DSig$ sets.

\begin{prop}\label{Bound0}
Let $A \subseteq \mathbb{R}^n$ be $\DSig$.
Then $\dim_T(A) = \dim A$.
\end{prop}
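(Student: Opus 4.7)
The plan is to prove the two inequalities $\dim_T A \leq \dim A$ and $\dim A \leq \dim_T A$ separately. The first is already available: since every $\DSig$ set is $F_\sigma$, Fact~\ref{fact:naivedim}(5) immediately gives $\dim_T A \leq \dim A$. So the real content is the reverse inequality, and this is where Proposition~\ref{prop:raisedim} will do the work.

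For $\dim A \leq \dim_T A$, I would set $k := \dim A$ and pick, by the definition of naive dimension, a coordinate projection $\pi: \R^n \to \R^k$ such that $\pi(A)$ has interior in $\R^k$. Since $A$ is $\DSig$ and $\pi$ is continuous and definable, Fact~\ref{basicDsigma}(1) tells us that $\pi(A)$ is also $\DSig$. Because $\pi(A) \subseteq \R^k$ has nonempty interior, Fact~\ref{topdimmono}(1) gives $\dim_T \pi(A) = k$. Now applying Proposition~\ref{prop:raisedim} to the continuous definable map $\pi$ restricted appropriately, we get $\dim_T A \geq \dim_T \pi(A) = k = \dim A$.

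There is essentially no obstacle here; the whole proposition is a short assembly of previously established results. The only small care is ensuring that Proposition~\ref{prop:raisedim} applies as stated: it is formulated for a definable continuous $f : \R^n \to \R^m$ and a $\DSig$ set $A$, and a coordinate projection is of exactly this form, so the hypotheses are met directly. Combining the two inequalities yields $\dim_T A = \dim A$, completing the proof.
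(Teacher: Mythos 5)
Your proof is correct and follows essentially the same route as the paper: the inequality $\dim_T A \leq \dim A$ from the $F_\sigma$ property, then a coordinate projection $\pi$ with $\pi(A)$ having interior combined with Proposition~\ref{prop:raisedim} for the reverse inequality. The only difference is cosmetic (your invocation of Fact~\ref{basicDsigma}(1) is harmless but not needed, since Proposition~\ref{prop:raisedim} only requires $A$ itself to be $\DSig$).
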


\begin{proof}
As $A$ is $F_\sigma$ it follows from Fact~\ref{topdimmono} that $\dim_T A \leq \dim A$.
Let $\pi : \R^n \to \R^{\dim A}$ be a coordinate projection such that $\pi(A)$ has interior.
Then $\dim_T \pi(A) = \dim A$. By Proposition~\ref{prop:raisedim}, $\dim_T A \geq \dim A$.
\end{proof}

\subsection{Proof of Theorem A}
Recall our standing assumption that $\Cal R$ does not admit a dense $\omega$-orderable set.
Theorem~\ref{hausdorff} below shows that if $A \subseteq \R^n$ is closed and satisfies $\dim_T(A) < \dim_H(A)$, then $( \mathcal{M}_\R, A)$ defines a dense $\omega$-orderable set, and hence defines all compact sets by Theorem C.
Theorem A follows.

\begin{theorem}\label{hausdorff}
Suppose that $\Cal R$ expands $\mathcal{M}_\R$.
Let $A\subseteq \R^n$ be $\DSig$. Then $\dim_T(A) = \dim_H(A)$.
\end{theorem}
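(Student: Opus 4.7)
The plan is to follow the strategy outlined in the introduction, reducing to the case of a compact set, then taking a sufficiently large Cartesian power, then applying the Marstrand projection theorem together with the tools already established (Theorem D, Propositions~\ref{prop:raisedim} and~\ref{Bound0}).

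First I would reduce to the compact case. If $\{X_{r,s} : r,s>0\}$ witnesses that $A$ is $\DSig$, then by the monotonicity condition $A=\bigcup_{n\in\N}X_{n,1/n}$, a countable increasing union of compact sets. Fact~\ref{Hsadditive}(5) gives $\dim_H A=\sup_n\dim_H X_{n,1/n}$, and Fact~\ref{topdimmono}(3) gives $\dim_T A=\sup_n\dim_T X_{n,1/n}$. So it suffices to prove the equality for compact $A$. Since Fact~\ref{topdimmono}(5) unconditionally gives $\dim_T A\leq\dim_H A$, the task reduces to proving $\dim_H A\leq\dim_T A$ when $A$ is compact.

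Assume for contradiction that $A$ is compact and $d:=\dim_T A<\dim_H A$. Since $\dim_H A-d>0$, choose $k\in\N$ with $k(\dim_H A-d)>1$. Then Fact~\ref{Hsadditive}(4) yields $\dim_H A^k\geq k\dim_H A>kd+1$. Note $A^k$ is $\DSig$ (witnessed by $\{X_{r,s}^k\}$), so Proposition~\ref{Bound0} and Fact~\ref{fact:naivedim}(3) give $\dim_T A^k=\dim A^k=k\dim A=k\dim_T A=kd$. Now apply the Marstrand projection theorem (Fact~\ref{marstrand}) to the Borel set $A^k$: there is an orthogonal projection $\rho:\R^{nk}\to\R^{kd+1}$ such that $\rho(A^k)$ has positive $(kd+1)$-dimensional Lebesgue measure. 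Because $\mathcal R$ expands $\mathcal M_\R$, every $\R$-linear map is definable, so $\rho$ is definable; then $\rho(A^k)$ is $\DSig$ by Fact~\ref{basicDsigma}(1). Theorem D now forces $\rho(A^k)$ to have nonempty interior, so $\dim_T\rho(A^k)=kd+1$. But Proposition~\ref{prop:raisedim} gives $\dim_T\rho(A^k)\leq\dim_T A^k=kd$, a contradiction.

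There is no hard step remaining; the key conceptual point is the Cartesian-power trick, which converts a strict but possibly small inequality $\dim_T A<\dim_H A$ into a full unit gap $\dim_T A^k+1\leq\dim_H A^k$ so that Marstrand delivers a projection into exactly one more integer dimension than $\dim_T A^k$, producing a set of positive Lebesgue measure whose topological dimension would have to simultaneously equal $kd+1$ (by Theorem D) and be at most $kd$ (by Proposition~\ref{prop:raisedim}).
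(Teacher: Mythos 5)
Your proof is correct and follows essentially the same route as the paper: the Cartesian-power trick combined with Marstrand, Theorem D (Proposition~\ref{lmeasure}), Proposition~\ref{prop:raisedim}, and $\dim_T(A^k)=k\dim_T(A)$ via Proposition~\ref{Bound0}. The only cosmetic differences are your (harmless but unnecessary, since $\DSig$ sets are already Borel) reduction to the compact case and your phrasing as a contradiction with one well-chosen $k$ instead of the paper's equivalent family of inequalities $\dim_H(A)\leq\dim_T(A)+\frac{1}{k}$ for all $k$.
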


\begin{proof}
By Fact~\ref{topdimmono} we have $\dim_T(A) \leq \dim_H(A)$.
We establish $\dim_H(A) \leq \dim_T(A)$ by proving the following inequality for all $k \geq 2$:
 \begin{equation}\label{eq:dimco1}
     \dim_H(A) \leq \dim_T(A) + \frac{1}{k}.\tag{*}
 \end{equation}
Fix $k \geq 2$.
We first show that
\begin{equation}\label{eq:dimco2}
\dim_H(A^k) \leq \dim_T(A^k) + 1. \tag{**}
\end{equation}
To prove \eqref{eq:dimco2} it is enough to show that whenever $\dim_H(A^k) > m$, then $\dim_T(A^k) \geq m$. Suppose that $\dim_H(A^k)>m$.
By the Marstrand projection theorem (Fact~\ref{marstrand}) there is an orthogonal projection $\rho : \mathbb{R}^{kn} \to \mathbb{R}^m$ such that $\rho(A^k)$ has positive $m$-dimensional Lebesgue measure. Since $\Cal R$ expands $\mathcal M_{\R}$, $\rho$ is definable. Because $\rho(A^k)$ is $D_\Sigma$, it follows from Proposition~\ref{lmeasure} that $\rho(A^k)$ has nonempty interior.
Thus $\dim_T \rho(A^k) = m$.
As $\rho$ is definable Proposition~\ref{prop:raisedim} yields $\dim_T(A^k) \geq m$.
Thus \eqref{eq:dimco2} holds. \newline
We now show \eqref{eq:dimco1}. First note that $k \dim_H(A) \leq \dim_H(A^k)$ by Fact~\ref{Hsadditive}. Moreover,
Fact~\ref{fact:naivedim} and Proposition~\ref{Bound0} imply $\dim_T(A^k) = k \dim_T(A)$. Thus we get from \eqref{eq:dimco2} that
$$ k \dim_H(A) \leq k \dim_T(A) + 1. $$
Dividing by $k$ yields the desired inequality \eqref{eq:dimco1}.
\end{proof}

\noindent As observed in the introduction, Theorem \ref{hausdorff} fails when `\emph{expands $\mathcal{M}_{\R}$}' is replaced by `\emph{expands $\mathcal{M}_K$ for some countable subfield $K$ of $\R$}'. We do not know whether the same is true when $K$ ranges over uncountable subfields of $\R$. We leave this as an open question.

\begin{ques} Is there an uncountable subfield $K\subseteq \R$ and a closed subset $A\subseteq \R^n$ such that $\dim_T A < \dim_H A$ and $(\mathcal M_K,A)$ avoids a compact set?
\end{ques}

\section{Properties of topological dimension}\label{naive}

Throughout this section let $\Cal R$ be an expansion of $(\R,<,+)$ that does not define a dense $\omega$-orderable set. \emph{`Definable'} will mean \emph{`definable in $\Cal R$ possibly with parameters'}. By Proposition~\ref{Bound0} the results of this section hold with topological dimension in place of naive dimension. Theorem E follows.

\begin{cor}\label{dimclosure}
Let $A \subseteq \R^n$ be $\DSig$.
Then $\dim \cl(A) = \dim A$ and $\dim \bd(A) < n$.
\end{cor}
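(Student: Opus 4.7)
My plan is to reduce both statements to the Strong Baire Category Theorem (Theorem~\ref{SBCT}) via the characterization of naive dimension through coordinate projections. The key observations are that coordinate projections are continuous and definable (so they preserve the $\DSig$ property by Fact~\ref{basicDsigma}(1)), and that $\cl(A)$ and $\bd(A)$ are closed, hence $\DSig$.

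For the equality $\dim \cl(A) = \dim A$, the inequality $\dim A \leq \dim \cl(A)$ is immediate from the monotonicity of naive dimension. For the reverse, I would take a coordinate projection $\pi \colon \R^n \to \R^k$ such that $\pi(\cl(A))$ has interior, where $k = \dim \cl(A)$. Since $\pi$ is continuous, $\pi(\cl(A)) \subseteq \cl(\pi(A))$, so $\cl(\pi(A))$ has interior; equivalently, $\pi(A)$ is somewhere dense. Now $\pi(A)$ is $\DSig$ as the continuous image of a $\DSig$ set, so by the first part of Theorem~\ref{SBCT}, $\pi(A)$ either has interior or is nowhere dense. It cannot be nowhere dense, so $\pi(A)$ has interior, witnessing that $\dim A \geq k$.

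For the strict inequality $\dim \bd(A) < n$, note that $\bd(A) = \cl(A) \setminus \Int(A)$ is closed, hence $\DSig$. It suffices to show that $\bd(A)$ has empty interior. Suppose for contradiction that $U \subseteq \bd(A)$ for some nonempty open $U \subseteq \R^n$. Then $U \subseteq \cl(A)$, so $A$ is dense in $U$. By the \emph{moreover} clause of Theorem~\ref{SBCT}, $\Int(A)$ is dense in $U$, contradicting $U \cap \Int(A) = \emptyset$. Hence $\bd(A)$ has empty interior, so by Fact~\ref{fact:naivedim}(1) we get $\dim \bd(A) < n$.

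No step looks like a serious obstacle: everything follows once we recognize that $\cl(A)$, $\bd(A)$, and the relevant projections are $\DSig$, and we invoke both halves of SBCT. The only mildly subtle point is noting that the \emph{moreover} clause of Theorem~\ref{SBCT} (not just the dichotomy) is exactly what rules out interior of the boundary; without it one would only know $\bd(A)$ has interior or is nowhere dense, not directly that interior is impossible.
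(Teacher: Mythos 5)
Your proposal is correct and follows essentially the same route as the paper: both parts reduce to SBCT (Theorem~\ref{SBCT}), the first by projecting and noting that $\pi(A)$ is somewhere dense hence has interior, the second by observing that interior of $\bd(A)$ would force $A$ to be dense yet co-dense in an open set, which SBCT forbids. The paper states the boundary step more tersely, but the underlying argument is identical to yours.
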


\begin{proof}
By monotonicity of the naive dimension, $\dim \cl(A) \geq \dim A$. We first show that $\dim A \geq \dim \cl(A)$.
Let $m = \dim \cl(A)$.
Let $\pi : \R^n \to \R^m$ be a coordinate projection such that the image of $\cl(A)$ under $\pi$ contains an open set $U$.
Because $A$ is dense in $\cl(A)$, we see that $\pi(A)$ is dense in $U$.
By SBCT, $\pi(A)$ has interior in $U$. Thus $\dim(A) \geq m$.
The second claim follows from SBCT, because the boundary of a subset of $\R^n$ has interior if and only if the set is dense and co-dense in some nonempty open set.
\end{proof}

\begin{cor}\label{cor:dimension} Let $A,B \subseteq \R^n$ be $\DSig$. Then $\dim A \cup B = \max \{ \dim A, \dim B\}$.
\end{cor}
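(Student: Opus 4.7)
The inequality $\max\{\dim A,\dim B\}\leq \dim(A\cup B)$ is immediate from Fact~\ref{fact:naivedim}(2), so the work is to prove the reverse inequality. Let $m=\dim(A\cup B)$ and pick a coordinate projection $\pi:\R^n\to\R^m$ so that $\pi(A\cup B)$ has nonempty interior $U\subseteq\R^m$. Since $\pi$ is continuous and definable, Fact~\ref{basicDsigma}(1) shows that both $\pi(A)$ and $\pi(B)$ are $\DSig$, and $\pi(A\cup B)=\pi(A)\cup\pi(B)\supseteq U$.

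The key step is to show that at least one of $\pi(A),\pi(B)$ is somewhere dense in $U$. Suppose for contradiction that both are nowhere dense in $U$. Then, by SBCT (Theorem~\ref{SBCT}) applied to each of the $\DSig$ sets $\pi(A)\cap U$ and $\pi(B)\cap U$, neither has interior, and being nowhere dense they are each meager in $U$. But then $\pi(A)\cup\pi(B)$ is meager in $U$, contradicting $U\subseteq \pi(A)\cup\pi(B)$ via the Baire Category Theorem. So, after relabeling, there is a nonempty open $V\subseteq U$ in which $\pi(A)$ is dense. Applying SBCT once more to the $\DSig$ set $\pi(A)\cap V$, its interior is dense in $V$; in particular $\pi(A)$ has nonempty interior. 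Hence $\dim A\geq m$, yielding $\max\{\dim A,\dim B\}\geq m=\dim(A\cup B)$.

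The only point requiring any care is the Baire-category dichotomy in the middle paragraph; once one observes that the images $\pi(A),\pi(B)$ remain $\DSig$, the rest is just SBCT applied twice (first to rule out the pathological case that both images are nowhere dense, then to promote somewhere-denseness to having interior).
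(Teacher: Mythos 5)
Your proof is correct and takes essentially the same route as the paper's: project onto $\R^m$, note the images are $\DSig$, and use the SBCT dichotomy to conclude that one of $\pi(A)$, $\pi(B)$ must have interior. The only cosmetic difference is that you pass through meagerness and the Baire Category Theorem to rule out both images being nowhere dense, where the paper simply observes that a finite union of nowhere dense sets is nowhere dense.
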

\begin{proof} By Fact \ref{fact:naivedim} it is enough to show that $\dim (A \cup B) \leq \max \{ \dim A, \dim B\}$.
Let $d = \dim (A \cup B)$. Let $\pi :
\R^n \to \R^d$ be a coordinate projection such that $\pi(A \cup B)$ has interior. We just need to show that either $\pi(A)$ or $\pi(B)$ has interior.
If neither $\pi(A)$ nor $\pi(B)$ has interior, then by SBCT both $\pi(A)$ and $\pi(B)$ are nowhere dense. As a finite union of nowhere dense sets is nowhere dense, $\pi(A\cup B)$ is nowhere dense. This contradicts our assumption that $\pi(A\cup B)$ has interior.
\end{proof}

\begin{lem}\label{lem:complinDSig} Let $A,A' \subseteq \R^n$ be $D_{\Sigma}$ such that $\dim A' < \dim A$. Then there is a $D_{\Sigma}$ set $B \subseteq A \setminus A'$ with $\dim B = \dim A$.
\end{lem}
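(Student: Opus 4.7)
The plan is to locate inside $A$ a ``continuous section'' over an open set in $\R^d$ and then restrict that section to avoid the low-dimensional trace of $A'$. Set $d := \dim A$ and, after permuting coordinates, take the projection $\pi : \R^n \cong \R^d \times \R^{n-d} \to \R^d$ onto the first $d$ coordinates so that $\pi(A)$ has interior. By $D_\Sigma$-choice (Proposition~\ref{select}), there is a definable open set $V \subseteq \pi(A)$ and a continuous definable function $f : V \to \R^{n-d}$ with $\gr(f) \subseteq A$. Note that $\gr(f)$ is $D_\Sigma$ as the image of the open (hence $D_\Sigma$) definable set $V$ under the continuous definable map $x \mapsto (x, f(x))$.

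Next I will show that the ``bad'' points on the graph project to a nowhere dense subset of $V$. Consider
\[
A'' := \{\, x \in V : (x, f(x)) \in A' \,\} = \pi\bigl(A' \cap \gr(f)\bigr).
\]
Since finite intersections and continuous definable images of $D_\Sigma$ sets are again $D_\Sigma$ (Fact~\ref{basicDsigma}), $A''$ is $D_\Sigma$. Because $\pi$ restricts to a homeomorphism from $\gr(f)$ onto $V$, it further restricts to a homeomorphism from $A' \cap \gr(f)$ onto $A''$. Invoking Proposition~\ref{Bound0}, which identifies naive and topological dimension on $D_\Sigma$ sets, we obtain $\dim A'' = \dim(A' \cap \gr(f)) \leq \dim A' < d$. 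Thus $A'' \subseteq \R^d$ is a $D_\Sigma$ set of naive dimension strictly less than $d$, so it has empty interior, and SBCT (Theorem~\ref{SBCT}) forces $A''$ to be nowhere dense in $\R^d$.

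Finally, put $W := V \setminus \cl(A'')$. This is an open subset of $V$, nonempty because $\cl(A'')$ is a nowhere dense closed set, and define
\[
B := \gr(f|_W) = \{\,(x, f(x)) : x \in W\,\}.
\]
Then $B \subseteq \gr(f) \subseteq A$, and for any $(x, f(x)) \in B$ we have $x \notin A''$, so $(x, f(x)) \notin A'$; hence $B \subseteq A \setminus A'$. The set $B$ is $D_\Sigma$ as the image of the open definable set $W$ under $x \mapsto (x, f(x))$, and since $\pi(B) = W$ has interior we get $\dim B \geq d$; combined with $B \subseteq A$ this forces $\dim B = d$.

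I expect the only subtle step to be arguing $\dim A'' < d$: one cannot bound $\dim A''$ by $\dim A'$ directly (the inclusion $A'' \subseteq A'$ fails, since $A''$ lives in $\R^d$), and instead must pass through $A' \cap \gr(f)$ and exploit that $\pi$ restricts to a homeomorphism on $\gr(f)$. Everything else is a matter of tracking the $D_\Sigma$-closure properties from Fact~\ref{basicDsigma}; in particular, the reason for defining $B$ as a graph over an open set rather than as the set-theoretic difference $\gr(f) \setminus A'$ is that the complement of a $D_\Sigma$ set need not be $D_\Sigma$.
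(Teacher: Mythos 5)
Your proof is correct, but it takes a genuinely different route from the paper's. The paper argues in three lines: by Corollary~\ref{dimclosure} the closure of $A'$ still has dimension $<\dim A$; the set $B:=A\setminus\cl(A')$ is $\DSig$ (intersection of $A$ with a definable open set); and Corollary~\ref{cor:dimension} applied to the decomposition of $A$ into $A\cap\cl(A')$ and $B$ forces $\dim B=\dim A$. You instead invoke the heavier machinery of $\DSig$-choice (Proposition~\ref{select}) to produce a continuous section of $A$ over an open $V\subseteq\R^d$, show that the trace $A''$ of $A'$ on the base is a $\DSig$ set of dimension $<d$ and hence nowhere dense by SBCT, and restrict the section to $V\setminus\cl(A'')$. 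Both arguments rest on the same trick for circumventing the failure of $\DSig$ under complements --- delete the \emph{closure} of the offending set rather than the set itself --- but the paper applies it directly in $\R^n$ (which is exactly what Corollary~\ref{dimclosure} licenses), whereas you first flatten the situation to a graph so that the offending set becomes nowhere dense in the base. Consequently the paper's $B$ is all of $A$ minus a small closed set, while yours is a thin graph; the lemma only asserts the stated properties, and its uses in the proof of Theorem~\ref{thm:additivity} need nothing more, so either output suffices. Your dimension count for $A''$ (passing through topological dimension via the homeomorphism $\pi|_{\gr(f)}$ and Proposition~\ref{Bound0}) is valid, though you could bypass Proposition~\ref{Bound0} by noting that a coordinate projection never raises naive dimension, since a coordinate projection of $\pi(X)$ is itself a coordinate projection of $X$.
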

\begin{proof}
By Corollary~\ref{dimclosure} we have $\dim \cl(A') < \dim A$.
Note that $B := A \setminus \cl(A')$ is $\DSig$. By Corollary~\ref{cor:dimension}, $\dim A = \max \{ \dim \cl(A'), \dim B \}$. Thus $\dim A = \dim B$.
\end{proof}

\subsection{Dimension additivity} In this section we will show the following additivity theorem for naive dimension.
\begin{thm}\label{thm:additivity} Let $A\subseteq \R^m \times \R^n$ be $D_{\Sigma}$ such that $\dim A=d$.  Then there is $e\leq d$ such that
\[
\dim \{ x \in \pi(A) \ : \ \dim A_x \geq e \} \geq d - e,
\]
where $\pi : \R^{m+n} \to \R^m$ is the projection onto the first $m$-coordinates.
\end{thm}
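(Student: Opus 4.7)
The plan is to induct on $n$. The case $n=0$ is immediate: take $e=0$, so that $B_0 = \pi(A) = A$ has dimension $d$. For $n \geq 1$, let $\rho : \R^{m+n} \to \R^{m+n-1}$ drop the last coordinate, set $A' = \rho(A)$, and let $d' = \dim A'$; since naive dimension drops by at most one per omitted coordinate, $d' \in \{d-1, d\}$. When $d' = d$, the inductive hypothesis applied to $A'$ gives some $e'$ with $\dim \{x : \dim A'_x \geq e'\} \geq d - e'$, and since $A'_x$ is the last-coordinate projection of $A_x$ we have $\dim A_x \geq \dim A'_x$, so $e := e'$ works for $A$.

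When $d' = d-1$, the extra unit of dimension must live generically in the last coordinate. Define the $D_\Sigma$ set (Fact~\ref{fact:dimdsig})
\[
C := \{(x, y') \in \R^{m+n-1} : \{t \in \R : (x, y', t) \in A\}\text{ has interior in }\R\}.
\]
A Fubini-style consequence of Fact~\ref{KU}, which I call Claim~Y (namely, for any $D_\Sigma$ set $S \subseteq \R^{n-1} \times \R$, $\dim S \geq \dim\{y' : S_{y'}\text{ has interior in }\R\} + 1$), applied fiberwise to $A$ yields $\dim A_x \geq \dim C_x + 1$ for every $x$. Granting the sub-claim $\dim C = d - 1$, the inductive hypothesis applied to $C$ produces some $e^*$ with $\dim\{x : \dim C_x \geq e^*\} \geq (d-1) - e^*$, and the fiber inequality promotes this to $\dim B_{e^*+1} \geq d - (e^*+1)$, so $e := e^* + 1$ works. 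The bound $\dim C \leq d - 1$ is immediate from $C \subseteq A'$; for the reverse, suppose $\dim C \leq d - 2$. Then $\dim(A \cap \rho^{-1}(C)) \leq \dim C + 1 \leq d - 1 < d$, so Lemma~\ref{lem:complinDSig} produces a $D_\Sigma$ set $T \subseteq A \setminus \rho^{-1}(C)$ with $\dim T = d$, every fiber $T_{(x, y')}$ being a $D_\Sigma$ subset of $\R$ without interior, hence nowhere dense by Corollary~\ref{oldunary}.

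The main obstacle, which finishes the sub-claim and also handles the $n = 1$ base case directly, is the following key lemma: if $T \subseteq \R^k \times \R$ is $D_\Sigma$ with $\dim T = d$ and every fiber $T_x$ is nowhere dense in $\R$, then $\dim \pi(T) \geq d$ (whence the contradiction $d \leq \dim \pi(T) \leq \dim A' = d - 1$). I would prove it by picking a coordinate projection $\sigma = (\sigma', \mathrm{id})$ onto $\R^d$ realizing $\dim T$, with $\mathrm{id}$ on the last coordinate of $\R^{k+1}$ and $\sigma(T) \supseteq U' \times I$ open (if $\sigma$ omits the last coordinate, it factors through $\pi$ and we are done). $D_\Sigma$-choice (Proposition~\ref{select}) then produces a continuous definable section $x : W \to \R^k$ over an open $W \subseteq U' \times I$ with $\sigma'(x(u',t)) = u'$ and $(x(u',t), t) \in T$. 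Nowhere-density of each $T_{x_0}$ prevents $t \mapsto x(u',t)$ from being constant on any subinterval; Fact~\ref{fact:monotonicity} applied componentwise, together with a pigeonhole over the $k$ coordinates and Theorem~\ref{SBCT}, produces a coordinate $i$ and an open $V' \subseteq U'$ such that for every $u' \in V'$ the $i$-th component of $x(u',\cdot)$ is strictly monotone on an open piece of $I$. The definable graph $\phi(u', t) := (u', x(u',t))$ then restricts to a homeomorphism onto its image on a $d$-dimensional open subset of $W$, and its projection $\phi(W) \to x(W) \subseteq \pi(T)$ has singleton fibers, since $\sigma'(x) = u'$ recovers $u'$ from $x$; this yields $\dim \pi(T) \geq d$, as required. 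Once the key lemma is in hand, the induction and Claim~Y are straightforward bookkeeping.
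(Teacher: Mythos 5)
Your argument is essentially correct but organized quite differently from the paper's. The paper inducts on the triple $(m+n,m,d)$ ordered lexicographically, reduces everything to the case $m=1$, $d=n$, and only there --- after a cascade of reductions through Lemma~\ref{lem:interior} and Lemma~\ref{lem:complinDSig} --- extracts a scalar-valued continuous section via Proposition~\ref{select} and finishes with Fact~\ref{fact:monotonicity}, the intermediate value theorem, and Lemma~\ref{lem:easypart}. You instead induct only on the fiber dimension $n$, peel off the last fiber coordinate, split on whether $\dim\rho(A)$ drops, and isolate the real content into a single key lemma: a $D_\Sigma$ set all of whose line-fibers are nowhere dense cannot lose naive dimension under the projection killing that line. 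That lemma is a vector-valued generalization of the paper's endgame (your pigeonhole over the $k$ components of the section replaces the paper's scalar-valued case), your Claim~Y is an instance of Lemma~\ref{lem:easypart}, and your sub-claim $\dim C=d-1$ uses Lemma~\ref{lem:complinDSig} exactly as the paper does; so the supporting machinery is identical, but your induction is leaner (no descent on $m$ or on $d$). The bookkeeping in both branches checks out, including the point that the set $T$ produced by Lemma~\ref{lem:complinDSig} has every fiber without interior, hence nowhere dense by Corollary~\ref{oldunary}.

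One step of the key lemma needs repair. You pigeonhole the condition \emph{some component $x_i(u',\cdot)$ is strictly monotone on a subinterval} over $i\in\{1,\dots,k\}$ and invoke Theorem~\ref{SBCT} to find an $i$ working on an open set of $u'$. But SBCT applies to $D_\Sigma$ sets, and the set of $u'$ at which $x_i(u',\cdot)$ is strictly monotone on some subinterval is not obviously $D_\Sigma$ (strict monotonicity on a compact interval is a $G_\delta$-type condition, not a closed one). The fix is to drop monotonicity altogether: all you need is that the image $x_i(u',W_{u'})$ has interior, which already follows from non-constancy plus the intermediate value theorem. So pigeonhole instead on $S_i:=\{u' : (Y_i)_{u'}\text{ has interior}\}$, where $Y_i$ is the image of the open set $W$ under the continuous definable map $(u',t)\mapsto(u',x_i(u',t))$. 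Each $Y_i$ is $D_\Sigma$ by Fact~\ref{basicDsigma}, so each $S_i$ is $D_\Sigma$ by Fact~\ref{fact:dimdsig}; the $S_i$ cover the projection of $W$ to the $u'$-coordinates, so some $S_i$ has interior by SBCT; and then Fact~\ref{KU} gives directly that $Y_i$ --- which is the image of $x(W)\subseteq\rho(T)$ under the coordinate projection selecting the coordinates of $\sigma'$ together with the $i$-th --- has interior, whence $\dim\rho(T)\geq d$. This also lets you discard the final homeomorphism/singleton-fibers argument, which as written produces a bound on topological dimension and would additionally need Proposition~\ref{Bound0} to return to naive dimension.
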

\noindent To prove Theorem \ref{thm:additivity} we need to keep track of various coordinate projections. We fix the following notation. Let $[n]$ be the set $\{1,\dots,n\}$. Let $d\in \N, i_1,\dots,i_d \in [n]$ and $J\subseteq [n]$ such that $i_1 <  \ldots < i_d$ and $J=\{i_1,\dots,i_d\}$. We let $\pi_J$ be the coordinate projection $\R^n \to \R^d$ that projects onto the coordinates $i_1,\dots,i_d$.

\begin{lem}\label{lem:easypart} Let $A\subseteq \R^m \times \R^n$ be $D_{\Sigma}$ and $d,e \in \N$ be such that
\[
\dim \{ x \in \pi_{[m]}(A) \ : \ \dim A_x \geq e \} \geq d - e.
\]
Then $\dim A \geq d$.
\end{lem}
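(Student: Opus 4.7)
The plan is to exhibit a coordinate projection $\pi':\R^{m+n}\to\R^d$ such that $\pi'(A)$ has nonempty interior; this immediately gives $\dim A\geq d$. The hypothesis suggests building $\pi'$ by combining $d-e$ coordinates from the first $m$ (chosen so that $B:=\{x\in\pi_{[m]}(A):\dim A_x\geq e\}$ projects onto a set with interior) with $e$ coordinates from the last $n$ (chosen to witness $\dim A_x\geq e$). The subtlety is that the latter choice depends a priori on $x$. One may assume $d\geq e$, since in the degenerate range $d<e$ the hypothesis merely forces $B$ to be nonempty and the argument below applied at $(e,e)$ already yields $\dim A\geq e\geq d$.

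To uniformize the choice of fiber projection, for each $J\subseteq[n]$ of cardinality $e$ I would set
\[
B_J:=\{x\in\R^m:\pi_J(A_x)\text{ has interior}\}.
\]
Each $B_J$ is $\DSig$: the linear map $\Phi_J\colon(x,y)\mapsto(x,\pi_J(y))$ is continuous and definable in $(\R,<,+)$, so $\Phi_J(A)$ is $\DSig$ by Fact~\ref{basicDsigma}(1), its $x$-fiber equals $\pi_J(A_x)$, and Fact~\ref{fact:dimdsig}(1) then applies. From the definition of naive dimension in fibers, $B=\bigcup_J B_J$ over the finitely many size-$e$ subsets of $[n]$. The set $B$ itself is $\DSig$ by Fact~\ref{fact:dimdsig}(2), and $\dim B\geq d-e$ by hypothesis, so iterating Corollary~\ref{cor:dimension} yields some $J$ with $\dim B_J\geq d-e$. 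Fix such a $J$ and pick $I\subseteq[m]$ of size $d-e$ such that $\pi_I(B_J)\subseteq\R^{d-e}$ has nonempty interior.

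Finally, I would take $\pi'(x,y):=(\pi_I(x),\pi_J(y))$. For any $x'\in\pi_I(B_J)$, a preimage $x\in B_J$ with $\pi_I(x)=x'$ satisfies $\pi_J(A_x)\subseteq\pi'(A)_{x'}$, so $\pi'(A)_{x'}$ has interior. Thus $\{x'\in\R^{d-e}:\pi'(A)_{x'}\text{ has interior}\}$ contains the nonempty open set $\pi_I(B_J)$ and is in particular non-meager, and since $\pi'(A)$ is $F_\sigma$, Fact~\ref{KU} delivers that $\pi'(A)$ has interior. The main obstacle is really just the bookkeeping: packaging the $x$-dependent projection $\pi_J$ into a single fixed one through the decomposition $B=\bigcup_J B_J$ and checking that each $B_J$ is $\DSig$ so that Corollary~\ref{cor:dimension} legitimately applies. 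Once that is in place, the geometric content of the lemma boils down to a single invocation of the Kuratowski-Ulam theorem for $F_\sigma$ sets.
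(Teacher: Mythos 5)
Your proof is correct and follows essentially the same route as the paper: decompose $\{x:\dim A_x\geq e\}$ as the finite union of the $\DSig$ sets $B_J$, use Corollary~\ref{cor:dimension} to fix a single $J$ with $\dim B_J\geq d-e$, choose $I\subseteq[m]$ witnessing this, and conclude via Fact~\ref{KU} that $\pi_{I\cup J}(A\cap(B_J\times\R^n))$ has interior. The only (harmless) differences are that you spell out why each $B_J$ is $\DSig$ and explicitly dispose of the degenerate case $d<e$, which the paper leaves implicit.
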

\begin{proof} For $J \subseteq [n]$ with $|J|=e$ define
$B_J := \{ x \in \pi_{[m]}(A) \ : \ \pi_J(A_x) \hbox{ has interior} \}.$
By Fact \ref{fact:dimdsig} $B_J$ is $\DSig$ for every such $J$. By our assumption on $A$,
\[
\dim \left(\bigcup_{J\subseteq [n], |J|=e} B_J\right) \geq d-e.
\]
Therefore by Corollary \ref{cor:dimension} we can pick a $J_0\subseteq [n]$ such that $|J_0|=e$ and $\dim B_{J_0} \geq d-e$. Now consider
\[
A' := \{ (x,y) \in A \ : \ x \in B_{J_0} \}.
\]
Let $I\subseteq [m]$ be such that $|I|=d-e$ and $\pi_I(B_{J_0})$ has interior. Then $\pi_{I\cup J_0}(A')$ has interior by Fact \ref{KU}. Thus $\dim A \geq \dim A' \geq d$.
\end{proof}

\begin{lem}\label{lem:interior} Let $A\subseteq \R^{1+m}$ be $\DSig$ such that $\dim A=d$ and $\dim A_x< d$ for all $x \in \pi_{[1]}(A)$. Then $\pi_{[1]}(A)$ has interior.
\end{lem}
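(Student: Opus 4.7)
The proof will split into cases depending on which set of coordinates witnesses $\dim A = d$. Pick $J \subseteq [1+m]$ with $|J| = d$ and $\pi_J(A)$ having nonempty interior. If $1 \in J$, then because $\pi_{[1]} = \pi_{[1]} \circ \pi_J$, the set $\pi_{[1]}(A)$ equals the projection of an open subset of $\R^d$ onto a single coordinate, hence contains an open interval, and the lemma is immediate. So the substantive case is $1 \notin J$; write $J' = \{j-1 : j \in J\} \subseteq [m]$, so that for $(x,y) \in A$ one has $\pi_J(x,y) = \pi_{J'}(y)$.

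The plan is to apply $D_\Sigma$-choice (Proposition~\ref{select}) to $A$ viewed via the projection $\pi_J$, after reordering coordinates so that this becomes the projection onto the first $d$ coordinates. This produces a definable open $V \subseteq \pi_J(A) \subseteq \R^d$ together with a continuous definable section sending each $z \in V$ to a point $(f_1(z), y(z)) \in A$ with $\pi_{J'}(y(z)) = z$, where $f_1 : V \to \R$ is continuous and definable. Since $(f_1(z), y(z)) \in A$, we have $f_1(V) \subseteq \pi_{[1]}(A)$, so it suffices to prove that $f_1(V)$ has nonempty interior in $\R$.

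The heart of the argument exploits the small-fiber hypothesis to control the level sets $L_x := f_1^{-1}(x) \subseteq V$. For $z \in L_x$, the point $y(z)$ lies in $A_x$ and $\pi_{J'}(y(z)) = z$, so $L_x \subseteq \pi_{J'}(A_x)$. By naive-dimension monotonicity under coordinate projection, $\dim \pi_{J'}(A_x) \leq \dim A_x < d$, so $\pi_{J'}(A_x) \subseteq \R^d$ has empty interior; since $A_x$ is $D_\Sigma$ (Fact~\ref{basicDsigma}(3)) and coordinate projections preserve this class, $\pi_{J'}(A_x)$ is $D_\Sigma$, and SBCT (Theorem~\ref{SBCT}) forces it to be nowhere dense in $\R^d$. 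Consequently $L_x$ is nowhere dense in $V$, and in particular $L_x$ contains no nonempty open subset of $V$.

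Therefore $f_1$ cannot be constant on any connected component of $V$: if $f_1$ were identically $x_0$ on a component $W$, the open set $W$ would lie in $L_{x_0}$, contradicting that $L_{x_0}$ has empty interior in $V$. A nonconstant continuous map on a connected open subset of $\R^d$ has image a non-degenerate interval in $\R$, so $f_1(V)$ has nonempty interior, finishing the proof. The main obstacle to spot is that $D_\Sigma$-choice is precisely the tool that turns a projection with interior image into a continuous function whose level-set geometry can be read off from the fibers $A_x$; once this is set up, the fiber-dimension hypothesis and SBCT together rule out any level set filling an open piece of $V$, and the connectedness argument takes over.
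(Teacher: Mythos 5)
Your proof is correct and follows essentially the same route as the paper: both apply $\DSig$-choice (Proposition~\ref{select}) to the coordinate projection witnessing $\dim A = d$ and then exploit continuity and connectedness of the resulting section together with the hypothesis $\dim A_x < d$. The only difference is presentational: the paper argues by contradiction (empty interior makes $\pi_{[1]}(A)$ totally disconnected, forcing the section to be constant and thereby producing a fiber of dimension $d$), whereas you argue directly that no level set of the section can have interior, so the section is nonconstant on each component and its image is a nondegenerate interval.
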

\begin{proof}
We suppose towards a contradiction that $\pi_{[1]}(A)$ has empty interior. As $\dim A=d$, we can assume without loss of generality that $\pi_J(A)$ has interior for $J=\{2,\dots,d+1\}$.
By Proposition \ref{select} there is a connected open set $U\subseteq \pi_{J}(A)$ and a continuous definable map $f: U \to \R$ such that
\[
\{ (f(x),x) \ : \ x \in U \} \subseteq \pi_{[d+1]}(A).
\]
By the intermediate value theorem, $f(U)$ is connected. Since $\pi_{[1]}(A)$ is totally disconnected, $f$ is necessarily constant. Thus there is an $x \in \pi_{[1]}(A)$ such that $\pi_{[d+1]}(A)_x$ contains $U$. Thus for this $x$ we have $\dim A_x \geq d$, contradicting our assumptions on $A$.
\end{proof}

\begin{proof}[Proof of Theorem \ref{thm:additivity}]
We proceed by induction on the triple $(m+n,m,d)\in \N^{3}$, where $\N^3$ is ordered lexicographically.  The result is immediate when $m=0$ or $n=0$ or $d=0$.
Thus we may assume that $m>0,n>0$ and $d>0$, and that the desired result holds for all triples lexicographically smaller than $(m+n,m,d)$. \newline

\noindent We first consider the case $m=1$. In this case we need to show that either
\begin{itemize}
\item[(i)] there is an $x\in \pi_{[1]}(A)$ such that $\dim A_x = d$, or
\item[(ii)] $\{ x\in \pi_{[1]}(A) \ : \ \dim(A_x) \geq d-1\}$ has interior.
\end{itemize}
Let $J\subseteq [1+n]$ be such that $|J|=d$ and $\pi_J(A)$ has interior. If $d<n$, then by applying the induction hypothesis to $\pi_{\{1\} \cup J}(A)$, we have one of the following
\begin{itemize}
\item[(a)] either is $x\in \pi_{[1]}(A)$ such that $\dim \pi_{\{1\} \cup J}(A)_x = d$ or
\item[(b)] $\{ x\in \pi_{[1]}(A) \ : \ \dim(\pi_{\{1\} \cup J}(A)_x) \geq d-1\}$ has interior.
\end{itemize}
Observe that (a) implies (i), while (b) implies (ii). \newline

\noindent We have reduced the case $m=1$ to the case when $m=1$ and $d=n$. We suppose towards a contradiction that both (i) and (ii) fail. Since (i) fails, $\dim A_x< n$ for all $x\in \pi_{[1]}(A)$. Applying Lemma \ref{lem:interior} to the set
\[
A' :=\{ (x,u) \in (\R \times \R^n)\cap A \ : \ \dim(A_x) \geq n-1\},
\] we can deduce from the failure of (i) and (ii) that $\dim A'<n$. Thus by Lemma \ref{lem:complinDSig} there is a $\DSig$ subset $B$ of $A$ such that $\dim B = n$ and $B \subseteq A\setminus A'$. By definition of $A'$ we have $\dim B_x < n-1$ for all $x \in \pi_{[1]}(B)$. We assume without loss of generality that $A=B$. Thus $\dim(A_x) < n-1$  for all $x\in \pi_{[1]}(A)$. Set
\[
A'':=\{ u \in \pi_{[n]}(A)\ : \ \dim A_{u} \geq 1\}.
\]
We now show that $\dim A''<n-1$. Observe that for $x \in \pi_{[1]}(A)$
\[
A''_x = \{ v \in \pi_{[n-1]}(A_x) \ : \ \dim A_{(x,v)} \geq 1\}.
\]
Applying Lemma~\ref{lem:easypart} to $A_x$, we see that $\dim  A''_x  < n-2$
for all $x \in \pi_{[1]}(A)$. Applying the inductive hypothesis to $A''$ yields $\dim A'' < n - 1.$
Thus
\[
\dim \{ z\in A \ : \ \pi_{[n]}(z) \in A''\} < n.
\]
By Lemma \ref{lem:complinDSig} there is a $D_\Sigma$ subset $C$  of $A$ such that $\dim C=n$ and $\dim A_u =0$ for every $u \in \pi_{[n]}(C)$. We suppose without loss of generality that $A = C$. By Proposition \ref{select} there is an open $U\subseteq \R^n$ and a continuous definable $f : U \to \R^{1}$ such that $\gr(f) \subseteq A$. We assume without loss of generality that $A = \gr(f)$. Let $(y,z) \in (\R^{n-1} \times \R) \cap U$. By applying Fact \ref{fact:monotonicity} to $f(y,-)$ we get an open dense subset $V$ of $U_y$ such that the restriction of $f(y,-)$ to each connected component of $V$ is either constant or strictly monotone. As $\dim A_{x,y}=0$ for every $x \in \pi_{[1]}(A)$, the restriction of $f(y,-)$ to any connected component of $V$ is not constant. Thus for every $y \in \pi_{[n-1]}(U)$ there is an open interval $I_y$ such that $f(y,-)$ is strictly monotone on $I_y$. By the Intermediate Value Theorem, the set
\[
\{ x \in \pi_{[1]}(A) \ : \ \exists z \in I_y \ f(y,z) = x\}
\]
has interior for each $y\in \pi_{[n-1]}(U)$. Thus the set $\{ x \in \pi_{[1]}(A) \ : \ (x,y) \in \pi_{[n]}(A)\}$ has interior for every $y \in \pi_{[n-1]}(U)$. Therefore $\dim \pi_{[n]}(A) = n$ by Lemma \ref{lem:easypart}. This contradicts the failure of (ii).\newline

\noindent We finally consider the case $m>1$. Let $A \subseteq \R^{m+n}$ be $\DSig$ such that $\dim A = d$. As $m>1$ the induction hypothesis applied to $A$ (considered as a subset of $\R \times \R^{m+n-1}$)
shows that either
\begin{itemize}
\item[(i')] there is $x\in \pi_{[1]}(A)$ such that $\dim A_x = d$, or
\item[(ii')] $\{ x\in \pi_{[1]}(A) \ : \ \dim(A_x) \geq d-1\}$ has interior.
\end{itemize}
Suppose (i') holds and let $x\in \pi_{[1]}(A)$ be such that $\dim A_x = d$. By applying the inductive hypothesis to $A_x\subseteq \R^{m-1}\times \R^{n}$, there is an $e\leq d$ such that
\[
\dim \{ y \in \pi_{[m-1]}(A_x) \ : \ \dim A_{x,y} \geq e \} \geq d-e.
\]
Thus $\dim \{ (x,y) \in \pi_{[m]}(A) \ : \ \dim A_{x,y} \geq e \} \geq d-e$ and we are done in this case. Now suppose that (ii') holds.
For each $e\leq d-1$ set
\[
B_e := \{ x \in \pi_{[1]}(A) \ : \ \{ y \in \pi_{[m-1]}(A_x) \ : \ \dim A_{x,y} \geq e \} \geq d-1-e\}.
\]
By (ii') and the induction hypothesis (applied to $A_x\subseteq \R^m\times \R^{n-1}$), the set $\bigcup_{e\leq d-1} B_e$ has interior. Since each $B_e$ is $\DSig$, there is an $e_0\leq d-1$ such that $B_{e_0}$ has interior.
Consider $C := A \cap (B_{e_0} \times \R^{m-1+n})$. By Lemma \ref{lem:easypart} and the fact that $B_{e_0}$ has interior we have
\[
\dim \{ (x,y) \in \pi_{[m]}(A') \ : \ \dim A_{x,y} \geq e_0 \} \geq d-e_0.
\]
\end{proof}

\begin{corollary}\label{raisedim}
Let $A \subseteq \R^m$ and $B\subseteq \R^n$ be $D_\Sigma$ and $f : \R^m \to \R^n$ be continuous and definable.
If there is a $d\in \N$ such that $\dim f^{-1}(x) = d$ for all $x\in B$, then
\[
\dim f^{-1}(B) = \dim B + d.
\]
\end{corollary}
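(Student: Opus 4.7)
The plan is to replace $f^{-1}(B)$ by the graph of $f$ over $B$ and then invoke the additivity machinery already in place. If $B = \emptyset$ the statement is trivial, so assume $B \neq \emptyset$. Set $\Gamma := \gr(f) \cap (\R^m \times B) \subseteq \R^m \times \R^n$. Since $\gr(f)$ is closed (hence $D_\Sigma$), $\R^m$ and $B$ are $D_\Sigma$, and products and finite intersections of $D_\Sigma$ sets are $D_\Sigma$ by Fact~\ref{basicDsigma}, $\Gamma$ is $D_\Sigma$. The first projection $\Gamma \to f^{-1}(B)$ is a homeomorphism with inverse $x \mapsto (x, f(x))$, so $f^{-1}(B)$ is $D_\Sigma$ as well, and Proposition~\ref{Bound0} together with homeomorphism invariance of topological dimension yields $\dim f^{-1}(B) = \dim \Gamma$. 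Because $\dim f^{-1}(y) = d \in \N$ forces $f^{-1}(y) \neq \emptyset$ for every $y \in B$, when $\Gamma$ is viewed as a subset of $\R^n \times \R^m$ (swapping the two factors), its projection onto the first factor is exactly $B$, and its fiber over $y \in B$ is exactly $f^{-1}(y)$, of dimension $d$.

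For the upper bound, Theorem~\ref{thm:additivity} applied to $\Gamma$ yields some $e \leq \dim \Gamma$ with
\[
\dim\{y \in B : \dim f^{-1}(y) \geq e\} \geq \dim \Gamma - e.
\]
If $e > d$ the set on the left is empty and so has dimension $-\infty$, which is impossible since $\dim \Gamma$ is a non-negative integer. Hence $e \leq d$, the set on the left equals $B$, and we obtain $\dim \Gamma \leq \dim B + e \leq \dim B + d$.

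For the matching lower bound, apply Lemma~\ref{lem:easypart} to $\Gamma$ (still viewed with the $B$-coordinates first) with exponent $d$ and target dimension $\dim B + d$: the required inequality $\dim\{y \in B : \dim f^{-1}(y) \geq d\} = \dim B \geq (\dim B + d) - d$ is immediate, so the lemma gives $\dim \Gamma \geq \dim B + d$. Combining both bounds yields $\dim f^{-1}(B) = \dim B + d$. The only delicacy is the case split on $e$ emerging from Theorem~\ref{thm:additivity} and remembering to swap factors so that the $B$-coordinates play the role of the base of the fibration; no deeper obstacle seems to appear.
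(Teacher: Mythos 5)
Your proof is correct and follows essentially the same route as the paper: pass to the graph of $f$ over $B$, identify its dimension with that of $f^{-1}(B)$ via Proposition~\ref{Bound0}, and then get the upper bound from Theorem~\ref{thm:additivity} and the lower bound from Lemma~\ref{lem:easypart}. You merely spell out the details (the coordinate swap, the case analysis on $e$, and the nonemptiness of the fibers) that the paper's terse proof leaves implicit.
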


\begin{proof}
Let $G \subseteq \R^{m+n}$ be the graph of $f$. Then $f^{-1}(B)=\pi_{[m]}\left(G \cap (\R^m \times B)\right)$ and is thus $\DSig$. Applying Theorem \ref{thm:additivity} to $f^{-1}(B)$ yields $f^{-1}(B) \leq \dim B + d$. The inverse inequality follows from Lemma \ref{lem:easypart}.
\end{proof}

\section{Totally disconnected sets}

In this section we give a proof of Theorem B and show that Theorem A fails when Hausdorff dimension is replaced by packing dimension.

\subsection{Proof of Theorem B} We first recall a few results and definitions. Let $\Cal R$ be an expansion of $(\R,<,+)$. Then $\Cal R$ is \textbf{o-minimal} if every $\mathcal{R}$-definable subset of $\R$ is a finite union of open intervals and singletons. It is well-known that $\mathcal{M}_{\R}$ is o-minimal, see van den Dries \cite[Chapter 1.7]{Lou}. Given $E \subseteq \R^n$, we denote by $(\mathcal{R},E)^\sharp$ the expansion of $\mathcal{R}$ by all subsets of all cartesian powers of $E$.

\begin{fact}[{\cite[Theorem A]{FM-sparse}}]\label{HF}
Let $\mathcal{R}$ be an o-minimal expansion of $(\R,<,+)$. Let $E \subseteq \R$ be such that $f(E^k)$ is nowhere dense for every $\mathcal{R}$-definable $f: \R^k \to \R$.
Then every subset of $\R$ definable in $(\mathcal{R},E)^\sharp$ either has interior or is nowhere dense.
\end{fact}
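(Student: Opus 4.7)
The plan is to establish the dichotomy by reducing any $(\mathcal{R},E)^\sharp$-definable subset of $\R$ to a normal form controlled by $\mathcal{R}$-definable images of cartesian powers of $E$. The base case is immediate from o-minimality: every $\mathcal{R}$-definable subset of $\R$, even with parameters, is a finite union of points and open intervals and hence already satisfies the stated dichotomy in a strong form.

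The heart of the argument will be a structural description of $(\mathcal{R},E)^\sharp$-definable subsets of $\R$. By induction on formula complexity, combined with uniform o-minimal cell decomposition in the parameters, I would aim to show that any such $X \subseteq \R$ decomposes as $X = Y \cup Z$, where $Y$ is a finite union of open intervals (obtained from the one-dimensional cells of a uniformly-defined cell decomposition) and $Z$ is contained in $f(E^k)$ for some $k$ and some $\mathcal{R}$-definable $f : \R^k \to \R$ (the endpoints of cells and isolated fiber points, whose $\R$-coordinates depend $\mathcal{R}$-definably on the $E$-parameters selecting the cell). Granted such a decomposition the conclusion is immediate: by the sparsity hypothesis each $f(E^k)$ is nowhere dense, so $Z$ is nowhere dense; hence if $X$ has empty interior then $Y$ must be empty, and $X \subseteq Z$ is nowhere dense.

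The auxiliary ingredient for the inductive step is that arbitrary $(\mathcal{R},E)^\sharp$-definable subsets $S \subseteq E^\ell$ used as predicates can be absorbed by restricting the $\mathcal{R}$-definable selector $f$ to $S$, and nowhere-denseness is preserved because $f(S) \subseteq f(E^\ell)$. The key technical input is the definable-choice / continuous-selector property of o-minimal structures, which lets one produce the functions $f$ appearing in the $Z$-piece as genuine $\mathcal{R}$-definable continuous functions of the $E$-tuples, rather than merely relations.

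The main obstacle I expect is handling quantifiers over $\R$ in the defining formula. For an $\R$-existential $\exists t \in \R \; \psi(x,t,\bar e)$ with $\bar e \in E^k$, one must verify that projection onto $x$ interacts correctly with the uniform cell decomposition of $\psi$ in $(x,t)$: the boundary and isolated points of the projected set must still be realizable as $\mathcal{R}$-definable continuous images of $\bar e$. This is ultimately controlled by the fact that in o-minimal structures the boundary of a definable family of one-dimensional sets is $\mathcal{R}$-definable in the parameters and given by finitely many continuous selector functions; marshalling this uniformly across an arbitrary formula in $(\mathcal{R},E)^\sharp$, possibly after a preliminary reduction to a suitable prenex form with quantifiers over $E$ only outside the $\mathcal{R}$-core, is where the real work lies.
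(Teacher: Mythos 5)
First, a point of comparison: the paper does not actually prove this Fact --- it is imported, as a special case, from Friedman and Miller \cite{FM-sparse}, so the only proof to measure yours against is theirs. Your proposal does capture the overall shape of that argument: one shows that every $(\mathcal{R},E)^\sharp$-definable $X \subseteq \R$ has the normal form $X = \bigcup_{\bar e \in S} A_{\bar e}$ with $S \subseteq E^k$ and $(A_{\bar x})$ an $\mathcal{R}$-definable family; uniform finiteness in o-minimal structures then splits $X$ into the (open) union of the interval-cells of the fibers and a remainder contained in finitely many sets $f_i(E^k)$, where $f_i(\bar x)$ is the $i$-th boundary or isolated point of $A_{\bar x}$; the sparseness hypothesis makes the remainder nowhere dense, giving the dichotomy. (Minor quibble: your $Y$ need not be a \emph{finite} union of intervals when $S$ is infinite --- it is merely open, which suffices.)

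The genuine gap is that the normal form lemma, which is the entire content of the theorem, is not established. You propose ``induction on formula complexity combined with uniform cell decomposition in the parameters,'' but you never address the case of \emph{negation}: the complement of $\bigcup_{\bar e \in S} A_{\bar e}$ is an intersection $\bigcap_{\bar e \in S}(\R^n \setminus A_{\bar e})$ over a possibly infinite, wildly chosen $S \subseteq E^k$, and rewriting this as a union over $E$-tuples of an $\mathcal{R}$-definable family is precisely where Friedman and Miller do the real work (an induction on ambient dimension that itself invokes the sparseness of $E$). Uniform cell decomposition in the parameters does not dispose of this, because $S$ is an arbitrary subset of $E^k$, not a definable one. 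By contrast, the step you single out as the main obstacle --- existential quantification over $\R$ --- is actually the easy case, since projection commutes with the union over $S$: if $A \subseteq \R^{n+1}$ has the normal form, so does its projection, with the same $S$ and the projected family. Until the complementation step is supplied, the proposal is a correct reduction of the theorem to its hardest lemma rather than a proof.
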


The following corollary is a special case of Fact~\ref{HF} as the semilinear cell decomposition \cite[Corollary 1.7.8]{Lou} implies that every $\mathcal{M}_\R$-definable function $f : \R^k \to \R$ is piecewise affine.
We leave the details to the reader.

\begin{cor}\label{linearHF}
Let $E \subseteq \R$ be such that $T(E^k)$ is nowhere dense for every linear $T : \R^k \to \R$.
Then every subset of $\R$ definable in $(\mathcal{M}_{\R},E)^\sharp$ either has interior or is nowhere dense.
\end{cor}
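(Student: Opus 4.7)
The plan is to derive this corollary by applying Fact~\ref{HF} with $\mathcal{R} := \mathcal{M}_\R$. The structure $\mathcal{M}_\R$ is o-minimal by the classical result of van den Dries cited in the paper, so the only substantive thing to verify is the hypothesis on $E$: that $f(E^k)$ is nowhere dense for every $\mathcal{M}_\R$-definable function $f : \R^k \to \R$.

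To verify this, I would invoke the semilinear cell decomposition \cite[Corollary 1.7.8]{Lou}. This yields a finite partition $C_1, \dots, C_m$ of $\R^k$ into semilinear cells such that the restriction of $f$ to each $C_i$ agrees with an affine map $x \mapsto T_i(x) + c_i$, where $T_i : \R^k \to \R$ is linear and $c_i \in \R$. Consequently
\[
f(E^k) \;=\; \bigcup_{i=1}^m f(E^k \cap C_i) \;\subseteq\; \bigcup_{i=1}^m \bigl( T_i(E^k) + c_i \bigr).
\]
By hypothesis each $T_i(E^k)$ is nowhere dense, and translation by $c_i$ preserves this property. Since a finite union of nowhere dense sets is nowhere dense, $f(E^k)$ is nowhere dense, as required.

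With the hypothesis of Fact~\ref{HF} verified for $E$ in $\mathcal{M}_\R$, that fact applied to $(\mathcal{M}_\R, E)^\sharp$ yields the conclusion directly. There is no real obstacle here; the only point to watch is that semilinear cell decomposition gives genuinely affine (not merely continuous) pieces, which is exactly what allows us to bound each $f(E^k \cap C_i)$ by a translate of the image of $E^k$ under a single linear map, and thus to invoke the hypothesis of the corollary as stated.
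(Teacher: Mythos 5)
Your proof is correct and follows exactly the route the paper intends: the paper derives this corollary from Fact~\ref{HF} via semilinear cell decomposition and explicitly "leaves the details to the reader," and your argument supplies precisely those details (piecewise affineness, covering $f(E^k)$ by translates of the $T_i(E^k)$, and closure of nowhere density under translation and finite unions).
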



The next theorem characterizes compact totally disconnected $E \subseteq \R^n$ for which $(\mathcal{M}_{\R},E)$ avoids a compact set.

\begin{thm}\label{seven}
Let $E \subseteq \R^n$ be compact and totally disconnected.
Then the following are equivalent:
\begin{itemize}
\item [(i)] $(\mathcal{M}_{\R},E)$ avoids a compact set.
\item [(ii)] $(\mathcal{M}_{\R},E)$ does not define a dense $\omega$-orderable set.
\item [(iii)] $T(E^k)$ is nowhere dense for all $k$ and all linear maps $T : \R^{kn} \to \R$ .
\item [(iv)] $E^k$ has Hausdorff dimension zero for all $k$.
\item [(v)] Every $(\mathcal{M}_{\R},E)^\sharp$-definable subset of $\R$ has interior or is nowhere dense.
\item [(vi)] Every $(\mathcal{M}_{\R},E)$-definable subset of $\R$ has interior or is nowhere dense.
\end{itemize}
\end{thm}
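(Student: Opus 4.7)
The plan is to close the cycle (iii) $\Rarr$ (v) $\Rarr$ (vi) $\Rarr$ (ii) $\Rarr$ (i) $\Rarr$ (iv) $\Rarr$ (iii), drawing on Theorems A and C together with a higher-dimensional version of Corollary~\ref{linearHF} as the three substantive ingredients; the remaining links are formal.

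The hardest link is (iii) $\Rarr$ (v), which requires extending Corollary~\ref{linearHF} from $E \subseteq \R$ to $E \subseteq \R^n$. By the semilinear cell decomposition, every $\mathcal{M}_\R$-definable $f : \R^{kn} \to \R$ is piecewise affine, so $f(E^k)$ is a finite union of translates of sets $T(E^k)$ with $T$ linear, each nowhere dense by (iii); hence $f(E^k)$ is nowhere dense, and the proof of Fact~\ref{HF} then applies to yield that every $(\mathcal{M}_\R, E)^\sharp$-definable subset of $\R$ has interior or is nowhere dense. The implications (v) $\Rarr$ (vi) and (vi) $\Rarr$ (ii) are immediate: the first because $(\mathcal{M}_\R, E)$ is a reduct of $(\mathcal{M}_\R, E)^\sharp$, and the second because any dense $\omega$-orderable subset $D \subseteq \R$ is countable (so has empty interior) yet dense in some interval (so is not nowhere dense), contradicting (vi). The implication (ii) $\Rarr$ (i) is Theorem C applied to the uncountable subfield $K = \R$.

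For (i) $\Rarr$ (iv) I argue contrapositively using Theorem A: if $\dim_H E^k > 0$ for some $k$, then $E^k$ is closed, nonempty, and totally disconnected (a finite product of totally disconnected metric spaces), so $\dim_T E^k = 0 < \dim_H E^k$; Theorem A then shows that $\mathfrak{M}(E^k)$ contains every compact set, and since $E^k \in \mathfrak{M}(E) = (\mathcal{M}_\R, E)$ by iterated cartesian products, this contradicts (i). Finally (iv) $\Rarr$ (iii) is easy: for any linear $T : \R^{kn} \to \R$, Fact~\ref{Hsadditive}(3) gives $\dim_H T(E^k) \leq \dim_H E^k = 0$, and since $T(E^k)$ is compact it is closed with empty interior, hence nowhere dense. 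The main obstacle is the higher-dimensional extension of Corollary~\ref{linearHF}; I expect the proof of Fact~\ref{HF} in \cite{FM-sparse} to adapt with only cosmetic changes once the piecewise affine structure of $\mathcal{M}_\R$-definable maps between Euclidean spaces is noted, which is essentially the content of (the $\sharp$-analogue of) Fact A in the introduction. Beyond this, the argument reduces to bookkeeping around Theorems A and C and elementary metric topology.
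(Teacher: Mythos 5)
Your cycle (iii)$\Rightarrow$(v)$\Rightarrow$(vi)$\Rightarrow$(ii)$\Rightarrow$(i)$\Rightarrow$(iv)$\Rightarrow$(iii) is a legitimate reorganization, and several links genuinely diverge from the paper's proof, which instead runs (i)$\Leftrightarrow$(ii) (Theorem C), (ii)$\Rightarrow$(iii), (iii)$\Rightarrow$(iv), (iv)$\Rightarrow$(v)$\Rightarrow$(vi)$\Rightarrow$(ii). In particular, your (i)$\Rightarrow$(iv) via Theorem A is correct and non-circular (Theorem A is established in Section 5), and it lets you get away with the trivial direction (iv)$\Rightarrow$(iii) (Hausdorff dimension does not increase under linear maps, and a compact subset of $\R$ of Hausdorff dimension zero is nowhere dense). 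The paper, by contrast, must prove the nontrivial direction (iii)$\Rightarrow$(iv), which it does by combining the Marstrand projection theorem with Steinhaus's theorem on difference sets; your route simply buries that work inside Theorem A. The links (v)$\Rightarrow$(vi), (vi)$\Rightarrow$(ii), and (ii)$\Rightarrow$(i) are correct and identical in substance to the paper's.

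The one genuine gap is (iii)$\Rightarrow$(v). Fact~\ref{HF} and Corollary~\ref{linearHF} are stated only for $E \subseteq \R$, and you do not prove the $n$-dimensional extension: you assert that the proof in \cite{FM-sparse} ``adapts with only cosmetic changes,'' which is precisely the kind of claim that needs verification. The paper deliberately routes around this issue: it passes to the set $A \subseteq \R$ of coordinates of elements of $E$, notes that $(\mathcal{M}_\R,E)^\sharp$ is a reduct of $(\mathcal{M}_\R,A)^\sharp$ since $E \subseteq A^n$, verifies the hypothesis of the one-dimensional Corollary~\ref{linearHF} for $A$ (via Lemma~\ref{up}, in its proof of (iv)$\Rightarrow$(v)), and never needs a higher-dimensional Friedman--Miller theorem. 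You can repair your step the same way, without any dimension theory: $A^k$ is a finite union of images $\rho_\sigma(E^k)$ under coordinate-selection maps $\rho_\sigma : \R^{kn} \to \R^k$, so for linear $T : \R^k \to \R$ the set $T(A^k)$ is the finite union of the sets $(T \circ \rho_\sigma)(E^k)$, each nowhere dense by (iii); hence Corollary~\ref{linearHF} applies to $A$ and yields (v) for $E$. A further small imprecision in the same step: for piecewise affine $f$ the image $f(E^k)$ is a finite union of \emph{subsets} of translates of linear images of $E^k$ (the affine pieces live on cells, not on all of $\R^{kn}$), not a union of full translates; this is harmless for nowhere-density but should be stated correctly.
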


We first prove a lemma.

\begin{lem}\label{up}
Let $E \subseteq \R^n$ and let $A \subseteq \R$ be the set of coordinates of elements of $E$.
If $\dim_H(E^k) = 0$ for all $k$, then $\dim_H(A^k) = 0$ for all $k$.
\end{lem}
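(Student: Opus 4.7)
The plan is to express each cartesian power $A^k$ as a finite union of images of powers of $E$ under linear coordinate projections, and then use the standard monotonicity properties of Hausdorff dimension recalled in Fact~\ref{Hsadditive}.

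The first step is to identify $A$ concretely. Writing $\pi_i : \R^n \to \R$ for projection onto the $i$-th coordinate, the hypothesis that $A$ is the set of coordinates of elements of $E$ yields
\[
A \;=\; \bigcup_{i=1}^{n} \pi_i(E).
\]
The second step is to expand the $k$-th cartesian power by distributing the union across the product:
\[
A^k \;=\; \bigcup_{(i_1,\ldots,i_k)\in[n]^k}\; \pi_{i_1}(E)\times\cdots\times\pi_{i_k}(E).
\]
This is a finite union with $n^k$ summands.

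The third step is the key geometric observation. For each tuple $(i_1,\ldots,i_k)\in[n]^k$, the product $\pi_{i_1}(E)\times\cdots\times\pi_{i_k}(E)$ is exactly the image of $E^k\subseteq\R^{nk}$ under the linear coordinate projection $\rho:\R^{nk}\to\R^k$ that sends $(x^{(1)},\ldots,x^{(k)})\in(\R^n)^k$ to $(x^{(1)}_{i_1},\ldots,x^{(k)}_{i_k})$. By the hypothesis $\dim_H(E^k)=0$ and Fact~\ref{Hsadditive}(3) (Hausdorff dimension does not increase under linear maps), each such product has Hausdorff dimension zero.

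The final step is to conclude. By Fact~\ref{Hsadditive}(5), Hausdorff dimension of a countable (in particular finite) union is the supremum of the dimensions of the pieces, so
\[
\dim_H(A^k)\;=\;\max_{(i_1,\ldots,i_k)\in[n]^k}\dim_H\!\bigl(\pi_{i_1}(E)\times\cdots\times\pi_{i_k}(E)\bigr)\;=\;0.
\]
This holds for all $k$, establishing the lemma. There is no real obstacle: the argument is a short formal manipulation, and the only substantive input is that linear (indeed, coordinate) projections cannot increase Hausdorff dimension.
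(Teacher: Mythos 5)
Your proof is correct and follows essentially the same route as the paper: the paper's maps $\rho_\sigma$ indexed by functions $\sigma:\{1,\ldots,k\}\to\{1,\ldots,n\}$ are exactly your coordinate projections indexed by tuples $(i_1,\ldots,i_k)\in[n]^k$, and both arguments conclude by combining the non-increase of Hausdorff dimension under linear maps with stability under finite unions. No differences worth noting.
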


\begin{proof}
Suppose that $\dim_H(E^k) = 0$ for all $k$.
Fix $k$.
For $1 \leq i \leq n$ let $\pi_i : \R^n \to \R$ be the projection onto the $i$th coordinate.
For each function $\sigma : \{ 1,\ldots,k \} \to \{ 1, \ldots, n \}$ we let $\rho_{\sigma} : E^k \to \R^{k}$ be given by
$$ \rho_\sigma(x_1,\ldots,x_k) = (\pi_{\sigma(1)}(x_1),\ldots,\pi_{\sigma(k)}(x_k)) \quad \text{for all } x_1,\ldots,x_k \in E. $$
Then $A^k$ is the union of the $\rho_\sigma(E^k)$.
By assumption $\dim_H (E^k) = 0$ for all $\sigma$. Therefore by Fact~\ref{Hsadditive} we have $\dim_H \rho_\sigma(E^k) = 0$ for all $\sigma
$.
Thus $\dim_H(A^k) = 0$.
\end{proof}

\begin{proof}[Proof of Theorem~\ref{seven}]
Theorem C shows that (i) and (ii) are equivalent. We show that (ii) implies (iii).
Suppose that $(\mathcal{M}_{\R},E)$ does not define a dense $\omega$-orderable set.
Every finite cartesian power of a totally disconnected set is totally disconnected. Thus $\dim_T(E^k) = 0$ for all $k$.
Let $T : \R^{kn} \to \R$ be linear.
As $\dim_T(E^k) = 0$ and $T$ is definable, it follows from Proposition~\ref{prop:raisedim} that $\dim_T T(E^k) = 0$.
Thus $T(E^k)$ has empty interior.
Because $T(E^k)$ is compact, it follows that $T(E^k)$ is nowhere dense.\newline
We show that (iii) implies (iv) by contrapositive.
Suppose $\dim_H(E^k) > 0$.
Let $m\in \N$ be such that $ \dim_H(E^k) > \frac{1}{m}$.
By Fact~\ref{Hsadditive}
$$ \dim_H(E^{mk}) \geq m \dim_H(E^k) > 1. $$
By the Marstrand projection theorem (Fact~\ref{marstrand}) there is an orthogonal projection $\rho : \R^{mk} \to \R$ such that $\rho(E^k)$ has positive Lebesgue measure.
As $\rho(E^k)$ has positive Lebesugue measure, it follows from a classical theorem of Steinhaus~\cite{Stein} that the set
$$ \{ t - t' : t,t' \in \rho(E^k)\} $$
has nonempty interior in $\R$.
Let $T : \R^{mk + mk} \to \R$ be given by
$$ T(x,x') = \rho(x) - \rho(x') \quad \text{for all } x,x' \in \R^{mk}. $$
Then $T$ is linear and $T(E^{2k})$ has interior.\newline
We show that (iv) implies (v).
Suppose that $\dim_H(E^k) = 0$ for all $k$.
We let $A \subseteq \R$ be the set of coordinates of elements of $E$. Observe that $A$ is compact because $E$ is compact. Moreover, $(\mathcal{M}_{\R},E)^\sharp$ is a reduct of $(\mathcal{M}_{\R},A)^\sharp$ as $E \subseteq A^n$.
We show that every $(\mathcal{M}_{\R},A)^\sharp$-definable subset of $\R$ has interior or is nowhere dense.
By Lemma~\ref{up}, $A^k$ has Hausdorff dimension zero for all $k$.
Let  $T : \R^k \to \R$ be linear. Because $\dim_H(A^k) = 0$, it follows from Fact~\ref{Hsadditive} that $\dim_H T(A^k) = 0$.
Thus $T(A^k)$ has empty interior. As $T(A^k)$ is compact, it follows that $T(A^k)$ is nowhere dense. Now apply Corollary~\ref{linearHF}.\newline
It is clear that (v) implies (vi). Statement (ii) follows from (vi), because a dense $\omega$-orderable set cannot have interior.
\end{proof}

\begin{corollary}
Let $A\subseteq \R$ be bounded such that for every open interval $I\subseteq \R$ the set $I\cap A$ is not dense and co-dense in $I$.
Then $(\mathcal{M}_\R, A)$ avoids a compact set if and only if $\bd(A)^k$ has Hausdorff dimension zero for every $k$.
\end{corollary}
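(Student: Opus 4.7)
The plan is to reduce to Theorem~\ref{seven} applied to $\bd(A)$. I would first verify that $\bd(A)$ is a compact totally disconnected subset of $\R$: it is closed, and bounded because $A$ is, so compact; the hypothesis on $A$ says precisely that no open interval is contained in $\bd(A)$, i.e.\ $\bd(A)$ has empty interior, and a closed nowhere dense subset of $\R$ is automatically totally disconnected since every connected subset of $\R$ is an interval. (The degenerate case $\bd(A)=\emptyset$ forces $A=\emptyset$, and the equivalence is trivial.)

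For the forward direction, $\bd(A) = \cl(A)\setminus\Int(A)$ is definable in $(\mathcal{M}_\R, A)$ because the usual $\varepsilon$-$\delta$ formulas for closure and interior are expressible already in $(\R,<,+,A)$. Hence $(\mathcal{M}_\R,\bd(A))$ is a reduct of $(\mathcal{M}_\R, A)$; if the latter avoids a compact set then so does the former, and Theorem~\ref{seven}(i)$\Leftrightarrow$(iv) yields $\dim_H \bd(A)^k = 0$ for every $k$.

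For the converse, assume $\dim_H \bd(A)^k = 0$ for every $k$. By Theorem~\ref{seven}(iv)$\Rightarrow$(v), every $(\mathcal{M}_\R,\bd(A))^\sharp$-definable subset of $\R$ has interior or is nowhere dense. I would then exhibit $A$ itself as $(\mathcal{M}_\R,\bd(A))^\sharp$-definable via the decomposition $A = (A\cap\bd(A)) \cup \Int(A)$. The first piece is a subset of $\bd(A)$, hence a primitive predicate of $(\mathcal{M}_\R,\bd(A))^\sharp$. For the second, observe $\R\setminus\bd(A) = \Int(A) \sqcup \Int(\R\setminus A)$, so each open-interval connected component of $\R\setminus\bd(A)$ lies entirely on one side of this partition; since $A$ is bounded, the two unbounded components miss $A$, so the components contributing to $\Int(A)$ are certain bounded intervals $(a,b)$ with $a,b\in\bd(A)$. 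Let $F\subseteq\bd(A)^2$ be the set of such pairs; then $F$ is a predicate of $(\mathcal{M}_\R,\bd(A))^\sharp$, and $\Int(A) = \{x\in\R : \exists(a,b)\in F,\ a<x<b\}$. Thus $A$, and hence every $(\mathcal{M}_\R, A)$-definable set, is $(\mathcal{M}_\R,\bd(A))^\sharp$-definable; in particular every $(\mathcal{M}_\R, A)$-definable subset of $\R$ has interior or is nowhere dense, which precludes a dense $\omega$-orderable set (countable, hence interior-free, yet dense in some interval). Theorem~C then gives that $(\mathcal{M}_\R, A)$ avoids a compact set.

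The main technical point is the explicit description of $\Int(A)$ in the converse direction; once this is in hand, the rest is a direct assembly of Theorem~\ref{seven} and Theorem~C. The role of boundedness is twofold: to ensure $\bd(A)$ is compact so Theorem~\ref{seven} applies, and to guarantee that the two unbounded components of $\R\setminus\bd(A)$ contribute nothing to $A$, leaving the description of $\Int(A)$ entirely in terms of pairs from $\bd(A)^2$.
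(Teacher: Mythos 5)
Your proposal is correct and follows essentially the same route as the paper: reduce to Theorem~\ref{seven} applied to the compact nowhere dense set $\bd(A)$, and for the converse recover $A$ inside $(\mathcal{M}_\R,\bd(A))^\sharp$ by writing $A\setminus\Int(A)$ as a subset of $\bd(A)$ and $\Int(A)$ as a union of components coded by a predicate on $\bd(A)^2$. The only difference is cosmetic (your $F$ is the paper's $B$, and you spell out the reduct/Theorem~C bookkeeping slightly more explicitly).
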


\begin{proof}
Since $A$ is bounded, $\bd(A)$ is compact.
Because $A$ is not dense and co-dense in any open interval, $\bd(A)$ is nowhere dense.
Thus, if $(\mathcal{M}_\R,A)$ avoids a compact set, then $\bd(A)^k$ has Hausdorff dimension zero for all $k$ by Theorem~\ref{seven}.
\newline

For the proof of the other implication, let $E = \bd(A)$ and assume that $E^k$ has Hausdorff dimension zero for all $k$.
Since $E$ is compact and nowhere dense, it follows from Theorem~\ref{seven} that $(\mathcal{M}_\R,E)^\sharp$ avoids a compact set.
We show that $A$ is $(\mathcal{M}_\R,E)^\sharp$-definable.
Note that $A \setminus \Int(A)$ is a subset of $E$ and therefore is $(\mathcal{M}_\R,E)^\sharp$-definable.
We show that $\Int(A)$ is $(\mathcal{M}_\R,E)^\sharp$-definable.
Without loss of generality we may assume that $A = \Int(A)$.
Then  $A$ decomposes as the union of its connected components, which are bounded open intervals.
Let $B$ be the set of $(x,y) \in E^2$ such that $x < y$ and $\{ t \in \R : x < t < y\}$ is a connected component of $A$. Note that $B$ is definable in $(\mathcal{M}_\R,E)^\sharp$.
Then
$$ A = \bigcup_{ (x,y) \in B } \{ t \in \R : x < t < y \}.$$
Thus $A$ is $(\mathcal{M}_\R,E)^\sharp$-definable because $B$ is.
\end{proof}

\subsection{Optimality of Theorem A}
Theorem~\ref{seven} may be used to show that Theorem~\ref{hausdorff} does not hold with packing dimension in place of Hausdorff dimension.
We construct a compact $A \subseteq \R$ such that $(\mathcal{M}_{\R},A)$ avoids a compact set and $0 < \dim_P(A) < 1$.
Our $A$ will be nowhere dense because $A$ is compact and $\dim_P(A) < 1$.
By Theorem~\ref{seven} it suffices to construct a compact $A \subseteq \R$ such that $\dim_H(A^k) = 0$ for all $k$ and $0 < \dim_P(A) < 1$.
Given a subset $S$ of the positive integers we let
$$  \underline{d}(S) = \liminf_{ n \to \infty } \frac{ | S \cap \{ 1 , \ldots, n \} | }{n}
\quad \text{and} \quad
\overline{d}(S) = \limsup_{ n \to \infty } \frac{ | S \cap \{ 1 , \ldots, n \}| }{n} \quad $$
be the lower and upper densities of $S$, respectively.
Let $E_S$ be the set of $t \in [0,1]$ such that the $k$th digit of the binary expansion of $t$ is zero whenever $k\notin S$.
Note that $E_S$ is compact for every $S\subseteq \N$. The following is a special case of Lemma 1 of \cite{WWW}.

\begin{fact}\label{www}
Let $S$ be a set of positive integers and $k\in \N$.
Then
$ \dim_H(E_S^k) = k \underline{d}(S)$ and $\dim_P(E_S^k) = k \overline{d}(S)$.
\end{fact}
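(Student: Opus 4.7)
The plan is to work with the natural probability measure $\mu_S$ on $E_S$, defined as the pushforward of the Bernoulli $(\tfrac12,\tfrac12)$ product measure on $\{0,1\}^S$ under the binary-expansion map; then $\mu_S^k$ is a probability measure supported on $E_S^k$. The basic mass estimate I would establish first is that for every $x\in E_S^k$ and every $n\in\N$,
\[
\mu_S^k\bigl(B(x,2^{-n})\bigr) \;\asymp\; 2^{-k\,|S\cap\{1,\ldots,n\}|},
\]
with $B(x,r)$ denoting the $\ell_\infty$-ball of radius $r$ in $\R^k$. This reflects the fact that fixing the first $n$ binary digits of each of the $k$ coordinates pins down exactly $k\,|S\cap\{1,\ldots,n\}|$ independent bits; passing from dyadic boxes to genuine $\ell_\infty$-balls only costs a bounded multiplicative constant and so plays no role in any dimension computation.

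For the Hausdorff part both inequalities fall out of this estimate. For the upper bound $\dim_H(E_S^k)\le k\underline d(S)$, fix $s>k\underline d(S)$ and pick a subsequence $(n_j)$ with $|S\cap\{1,\ldots,n_j\}|/n_j\to\underline d(S)$; covering $E_S^k$ by the $2^{k\,|S\cap\{1,\ldots,n_j\}|}$ dyadic cubes of side $2^{-n_j}$ that meet it produces $s$-Hausdorff sums bounded by a constant multiple of $2^{k\,|S\cap\{1,\ldots,n_j\}|-sn_j}\to 0$. Conversely, if $s<k\underline d(S)$ then $k\,|S\cap\{1,\ldots,n\}|\ge sn$ for all large $n$, so the mass estimate yields $\mu_S^k(B(x,r))\lesssim r^s$ uniformly in $x\in E_S^k$ for small $r$, and Frostman's mass distribution principle gives $\dim_H(E_S^k)\ge s$.

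For the packing part the upper bound comes essentially for free: the number of dyadic cubes of side $2^{-n}$ meeting $E_S^k$ is $2^{k\,|S\cap\{1,\ldots,n\}|}$, so $\overline{\dim}_B(E_S^k)=k\overline d(S)$, and $\dim_P\le \overline{\dim}_B$ gives $\dim_P(E_S^k)\le k\overline d(S)$. For the matching lower bound I would invoke the standard characterization (due to Taylor--Tricot and developed in Falconer's monographs on fractal geometry) of the packing dimension of a measure via upper local dimensions: for any finite Borel measure $\mu$ one has $\dim_P(\operatorname{supp}\mu)\ge \dim_P\mu$, and $\dim_P\mu$ equals the $\mu$-essential infimum of $\overline{\dim}_{\mathrm{loc}}\mu(x):=\limsup_{r\to 0}\log\mu(B(x,r))/\log r$. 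The mass estimate shows $\overline{\dim}_{\mathrm{loc}}\mu_S^k(x)=k\overline d(S)$ identically on $E_S^k$, so $\dim_P\mu_S^k=k\overline d(S)$, whence $\dim_P(E_S^k)\ge k\overline d(S)$.

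The main obstacle I expect is precisely this last step. Unlike the essentially self-contained Frostman argument that suffices for Hausdorff dimension, the identification of $\dim_P\mu$ with the essential infimum of the upper local dimension is a substantive input one has to import from fractal geometry; without it the natural measure $\mu_S^k$ only certifies the lower bound $\dim_P\ge k\underline d(S)$, which is much weaker than what is claimed. The only other minor subtlety is the mismatch between $\ell_\infty$-balls and products of dyadic intervals, but this only perturbs the central mass estimate by a bounded multiplicative constant independent of $n$ and does not affect any of the dimension conclusions.
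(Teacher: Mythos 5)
Your proof is correct. Note that the paper itself does not prove this fact at all: it simply quotes it as a special case of Lemma 1 of Wei--Wen--Wen \cite{WWW}, so there is no internal argument to compare against. Your route --- the Bernoulli measure $\mu_S$ on $E_S$, the dyadic mass estimate $\mu_S^k(B(x,2^{-n}))\asymp 2^{-k|S\cap\{1,\ldots,n\}|}$, Frostman for the Hausdorff lower bound, upper box dimension for the packing upper bound, and the Tricot/Falconer characterization of $\dim_P$ via the essential infimum of upper local dimensions for the packing lower bound --- is the standard way to compute both dimensions of such digit-restricted Cantor sets, and is essentially the same mechanism as in \cite{WWW} (which works with homogeneous Moran sets). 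You are right that the one genuinely nontrivial imported ingredient is the local-dimension criterion for packing dimension (if $\limsup_{r\to 0}\log\mu(B(x,r))/\log r\geq s$ on a set of positive $\mu$-measure then that set has packing dimension at least $s$); this is a correct, standard result, so invoking it is legitimate rather than a gap, and without it one indeed only gets $\dim_P\geq k\,\underline{d}(S)$. The remaining details you wave at are all harmless: the $\ell_\infty$-ball versus dyadic-cube discrepancy and the boundary-sharing of dyadic cubes cost only constants depending on $k$, interpolating general radii $r\in(2^{-n-1},2^{-n}]$ changes the local dimension by $o(1)$, and the ambiguity of binary expansions affects only a countable (hence null and dimension-irrelevant) set. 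So your write-up would serve as a self-contained proof where the paper offers only a citation.
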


Define a sequence $(a_n)_{n \in \mathbb{N}}$ of natural numbers by declaring $a_0 = 2$ and $a_{n + 1} = 2^{a_n}$ for all $n$.
Let $S$ be the set of $m\in \N$ such that $a_n \leq m \leq 2a_n$ for some $n$. It is easy to see that $\underline{d}(S) = 0$ and $\overline{d}(S) = \frac{1}{2}$. By Fact \ref{www} $\dim_H E_S^k = 0$ for all $k$ and $\dim_P E_S =\frac12$. Thus $(\mathcal{M}_{\R},E_S)$ avoids a compact set but $0<\dim_P(E_S) <1$.

\bibliographystyle{alpha}
\bibliography{HW-Bib}
\end{document}